\DeclareSymbolFont{cyrletters}{OT2}{wncyr}{m}{n}
\DeclareMathSymbol{\Sha}{\mathalpha}{cyrletters}{"58}
\title[Equivalence between desingularized and renormalized values]
{An equivalence between desingularized and renormalized values of multiple zeta functions at negative integers\\
}
\author{Nao Komiyama}
\address{Graduate School of Mathematics, Nagoya University, 
Furo-cho, Chikusa-ku, Nagoya 464-8602 Japan }
\email{m15027u@math.nagoya-u.ac.jp}
\thanks{{\tt Submitted}}
\date{April 11, 2017}
\newtheorem{thm}{Theorem}[section]
\newtheorem{lem}[thm]{Lemma}
\newtheorem{cor}[thm]{Corollary}
\newtheorem{prop}[thm]{Proposition}  
\theoremstyle{remark}
\subjclass[2010]{Primary 11M32}
\keywords{}
\numberwithin{equation}{section}
\theoremstyle{definition}
\newtheorem{definition}[thm]{Definition}
\newtheorem{remark}[thm]{Remark}
\newtheorem{example}[thm]{Examples}
\newtheorem{prob}[thm]{Problem}    
\newcommand{\twoheadlongrightarrow}{\relbar\joinrel\twoheadrightarrow}
\begin{document}
\bibliographystyle{amsalpha+}
\maketitle

\begin{abstract}
	It is known that the special values of multiple zeta functions at non-positive arguments are indeterminate in most cases due to the occurrences of infinitely many singularities. In order to give a suitable rigorous meaning of the special values there, Furusho, Komori, Matsumoto and Tsumura introduced the desingularized values by the desingularization method to resolve all singularities. While, Ebrahimi-Fard, Manchon and Singer introduced the renormalized values to keep the ``shuffle'' relation by the renormalization procedure \`a la Connes and Kreimer. In this paper, we reveal an equivalence, that is, an explicit interrelationship between these two values. As a corollary, we also obtain an explicit formula to describe renormalized values in terms of Bernoulli numbers.
\end{abstract}

\tableofcontents
\setcounter{section}{-1}
\section{Introduction}
In 1776, Euler (\cite{Euler}) considered a certain power series, the so-called double zeta values, and showed several relations among them. More than 200 years later than Euler, the {\it multiple zeta value} (MZV for short) which is more general series
$$\zeta(k_1, \dots, k_n) := \sum_{0<m_1<\cdots<m_n}\frac{1}{m_1^{k_1}\cdots m_n^{k_n}}$$
converging for $k_1,\cdots,k_n\in\mathbb{N}$ and $k_n>1$, appeared in \cite{Eca} written by Ecalle again, in 1981. In 1990s, these values also came to be focused by Hoffman (\cite{Hof}) and Zagier (\cite{Zag}). The MZV admits an iterated integral expression, which enables us to regard it as a period of a certain motive. (\cite{DG}, \cite{Go} and \cite{Te}). MZVs appear in calculations of the Kontsevich invariant in knot theory (\cite{CDM} and \cite{LM}). MZVs are also related to mathematical physics in \cite{BK1} and \cite{BK2}. They are explained in \cite{Zhao2}.

MZVs are regarded as special values at positive integer points of the {\it multiple zeta-function} (MZF for short), the series  
\begin{equation}\label{eqn:0.1}
	\zeta(s_1, \dots, s_n) := \sum_{0<m_1<\cdots<m_n}\frac{1}{m_1^{s_1}\cdots m_n^{s_n}}
\end{equation}
which converges absolutely in the region
\begin{equation*}
	\{(s_1,\cdots,s_n)\in\mathbb{C}^n\ |\ \frak{R}(s_{n-k+1}+\cdots+s_n)>k\ (1\leq k\leq n)\}.
\end{equation*}
 In the early 2000s, Zhao (\cite{Zhao}) and Akiyama, Egami and Tanigawa (\cite{AET}) independently showed that MZF can be meromorphically continued to $\mathbb{C}^n$. Especially, in \cite{AET}, the set of all singularities of the function $\zeta(s_1,\cdots,s_n)$ is determined as
	\begin{align}\label{eqn:0.2}
		&s_n=1,\nonumber\\
		&s_{n-1}+s_n=2,1,0,-2,-4,\cdots,\\
		&s_{n-k+1}+\cdots+s_n=k-r\quad (3\leq k\leq n,\ r\in\mathbb{N}_0).\nonumber
	\end{align}
Because almost all of integer points with non-positive arguments are located in the above singularities, the special values of MZF there are indeterminate in all cases except for $\zeta(-k)$ at $k\in\mathbb{N}_0$, and $\zeta(-k_1,-k_2)$ at $k_1,k_2\in\mathbb{N}_0$ with $k_1+k_2$ odd. Actually, giving a nice definition of ``$\zeta(-k_1,\dots,-k_n)$'' for $k_1,\dots,k_n\in\mathbb{N}_0$ is one of our most fundamental problems.

In order to resolve all infinitely many singularities of MZF, the desingularization method was introduced  by Furusho, Komori, Matsumoto and Tsumura in \cite{FKMT}. By applying this method to $\zeta(s_1,\dots,s_n)$, they constructed the {\it desingularized MZF}\footnote{It is denoted by $\zeta_n^{\rm des}((s_j);(1))$ in \cite{FKMT}.} $\zeta_{\scalebox{0.5}{\rm FKMT}}(s_1,\dots,s_n)$ which is entire on the whole space $\mathbb{C}^n$ and they also showed its basic properties. The {\it desingularized value}
\begin{equation}\label{eqn:0.6}
\zeta_{\scalebox{0.5}{\rm FKMT}}(-k_1,\dots,-k_n)\in\mathbb{C}
\end{equation}
is given as the special value of $\zeta_{\scalebox{0.5}{\rm FKMT}}(s_1,\dots,s_n)$ at $(s_1,\dots,s_n)=(-k_1,\dots,-k_n)$ for $k_1,\cdots,k_n\in\mathbb{N}_0$ (see Definition \ref{def:1.2.1}). In \cite{FKMT}, its generating function given by
\begin{equation}\label{eqn:0.4}
	Z_{\scalebox{0.5}{\rm FKMT}}(t_1,\dots,t_n) := \sum_{k_1,\dots,k_n=0}^{\infty}\frac{(-t_1)^{k_1}\cdots(-t_n)^{k_n}}{k_1!\cdots k_n!}\zeta_{\scalebox{0.5}{\rm FKMT}}(-k_1,\dots,-k_n)
\end{equation}
 in $\mathbb{C}[[t_1,\cdots,t_n]]$ was calculated and the desingularized values were described in terms of the Bernoulli numbers. (See Proposition \ref{prop:1.1.1}.)

In contrast, Connes and Kreimer (\cite{CK}) started a Hopf algebraic approach to the renormalization procedure in the perturbative quantum field theory. A fundamental tool in their work is the {\it algebraic Birkhoff decomposition} (Theorem \ref{thm:2.2.1}). By applying this decomposition to a certain Hopf algebra parameterizing regularized MZVs, Guo and Zhang (\cite{GZ}) gave the {\it renormalized values} which satisfy  the harmonic relations. Later, Manchon and Paycha (\cite{MP}) and Ebrahimi-Fard, Manchon and Singer (\cite{EMS2}) introduced the different renormalized values which obey  harmonic(-like) relations by using different Hopf algebras. Meanwhile, Ebrahimi-Fard, Manchon and Singer (\cite{EMS1}) also introduced another type of the renormalized values (cf. Definition \ref{def:2.3.1}) satisfying the ``shuffle relations'' (see Proposition \ref{prop:2.3.1} for precise), which in this paper we denote as
\begin{equation}\label{eqn:0.7}
\zeta_{\scalebox{0.5}{\rm EMS}}(-k_1,\dots,-k_n)\in\mathbb{C}
\end{equation}
for $k_1,\dots,k_n\in \mathbb{N}_0$, and which we consider with its generating function given by
\begin{equation}\label{eqn:0.5}
	Z_{\scalebox{0.5}{\rm EMS}}(t_1,\dots,t_n) := \sum_{k_1,\dots,k_n=0}^{\infty}\frac{(-t_1)^{k_1}\cdots(-t_n)^{k_n}}{k_1!\cdots k_n!}\zeta_{\scalebox{0.5}{\rm EMS}}(-k_1,\dots,-k_n)
\end{equation}
 in $\mathbb{C}[[t_1,\cdots,t_n]]$.

Our main theorem in this paper is an equivalence between the desingularized values (\ref{eqn:0.6}) and the renormalized values (\ref{eqn:0.7}):\\

\noindent
\smallskip
{\bf Theorem \ref{thm:3.2.1}.} {\it For $n\in\mathbb{N}$, we have}
\begin{equation*}
	Z_{\scalebox{0.5}{\rm EMS}}(t_1,\dots,t_n) = \prod_{i=1}^{n}\frac{1-e^{-t_i-\cdots-t_n}}{t_i+\cdots+t_n}\cdot Z_{\scalebox{0.5}{\rm FKMT}}(-t_1,\dots,-t_n).
\end{equation*}
\smallskip

As a consequence of this theorem, the renormalized values can be given as linear combinations of
the desingularized values and vice versa (cf. Examples \ref{ex:3.1} and \ref{ex:3.2}). By combining the above equivalence with the explicit formula (cf. Proposition \ref{prop:1.1.1}) of the desingularized values shown in \cite{FKMT}, we obtain the following explicit formula of the renormalized values.\\

\noindent
\smallskip
{\bf Corollary \ref{cor:3.2.1}.} 
For $k_1,\cdots,k_n\in\mathbb{N}_0$, we have
\begin{equation}\label{eqn:0.3}
	\zeta_{\scalebox{0.5}{\rm EMS}}(-k_1,\dots,-k_n)=(-1)^{k_1+\cdots+k_n}\sum_{\substack{\nu_{1i}+\cdots+\nu_{ii}=k_i\\1\leq i\leq n}}\prod_{i=1}^n\frac{k_i!}{\prod_{j=i}^n\nu_{ij}!}\frac{B_{\nu_{ii}+\cdots+\nu_{in}+1}}{\nu_{ii}+\cdots+\nu_{in}+1}.
\end{equation}
Here $B_n$ is the Bernoulli number in (\ref{eqn:1.1.4}).\\
\smallskip

\noindent

The plan of our paper goes as follows. In section \ref{sec:1}, we recall the desingularization method, desingularized MZF and the desingularized values introduced by Furusho, Komori, Matsumoto and Tsumura in \cite{FKMT}. In section \ref{sec:2}, we review an algebraic framework on Hopf algebra in \cite{EMS1}, and we prove an explicit formula of the reduced coproduct $\tilde{\Delta}_0$ (Proposition \ref{cor:2.1.1}) which is required to prove the recurrence formula of renormalized values in \cite{EMS1} in section \ref{sec:3}. We also review the algebraic Birkhoff decomposition and renormalized values in \cite{EMS1}. In section \ref{sec:3}, by showing a recurrence formula (Proposition \ref{thm:3.1.1}) we prove the above main results, that is, an equivalence between desingularized values and renormalized values (Theorem \ref{thm:3.2.1}) and an explicit formula of renormalized values (Corollary \ref{cor:3.2.1}).

\section{Desingularizations}\label{sec:1}
In this section, we review the desingularized values introduced by Furusho, Komori, Matsumoto and Tsumura in \cite{FKMT}. In \S1.1 we recall the desingularization method and desingularized MZF, and explain  some remarkable properties of this function. In \S1.2, we review the desingularized values and their generating function.
\subsection{The desingularization method and desingularized MZFs}
In this subsection, we review the desingularization method, the desingularized MZF. We also recall the basic properties of the desingularized MZF. 

The desingularization method is a method to resolve all singularities of MZF. We recall the generating function\footnote{It is denoted by $\tilde{\mathfrak{H}}_n\left((t_j);(1);c\right)$ in \cite{FKMT}.} $\tilde{\mathfrak{H}}_n\left(t_1,\dots,t_n;c\right) \in \mathbb{C}[[t_1,\dots,t_n]]$ which is defined by in \cite{FKMT} Definition 1.9
\begin{align*}
	\tilde{\mathfrak{H}}_n\left(t_1,\dots,t_n;c\right)&:=\prod_{j=1}^n\left(\frac{1}{\exp{\left(\sum_{k=j}^n t_k\right)}-1}-\frac{c}{\exp{\left(c\sum_{k=j}^n t_k\right)}-1}\right)\\
	&=\prod_{j=1}^n\left(\sum_{m=1}^{\infty}(1-c^m)B_m\frac{\left(\sum_{k=j}^n t_k\right)^{m-1}}{m!}\right)
\end{align*}
for $c\in\mathbb{R}$. Here $B_m\ (m\geq0)$ is the Bernoulli number which is defined by
\begin{equation}\label{eqn:1.1.4}
\displaystyle\frac{x}{e^x-1}:=\sum_{m\geq0}\frac{B_m}{m!}x^m.
\end{equation}
We note that $B_0=1$, $B_1=-\frac{1}{2}$, $B_2=\frac{1}{6}$.
\begin{definition}[\cite{FKMT} Definition 3.1]
	For non-integral complex numbers $s_1,\dots,s_n$, the {\it desingularized MZF} $\zeta_{\scalebox{0.5}{\rm FKMT}}(s_1,\dots,s_n)$ is defined by
	\begin{align}
		\label{eqn:1.1.2}&\zeta_{\scalebox{0.5}{\rm FKMT}}(s_1,\dots,s_n) \\
		&:=\lim_{\substack{c\rightarrow1\\c\in\mathbb{R}\setminus\{1\}}}\frac{1}{(1-c)^n}\prod_{k=1}^n\frac{1}{(e^{2\pi is_k}-1)\Gamma(s_k)}\int_{\mathcal{C}^n}\tilde{\mathfrak{H}}_n\left(t_1,\dots,t_n;c\right)\prod_{k=1}^n t_k^{s_k-1}d t_k. \nonumber
	\end{align}
	Here $\mathcal{C}$ is the path consisting of the positive real axis (top side), a circle around the origin of radius $\varepsilon$ (sufficiently small), and the positive real axis (bottom side).
\end{definition}
One of the remarkable properties of the desingularized MZF is that it is an entire function, i.e., the equation (\ref{eqn:1.1.2}) is well-defined as an analytic function by the following proposition.
\begin{prop}[\cite{FKMT} Theorem 3.4]
	The equation $\zeta_{\scalebox{0.5}{\rm FKMT}} (s_1,\dots,s_n)$ can be analytically continued to $\mathbb{C}^n$ as an entire function in $(s_1,\dots,s_n)\in \mathbb{C}^n$ by the following integral expression:
	\begin{align*}
		\label{eqn:1.1.2}&\zeta_{\scalebox{0.5}{\rm FKMT}}(s_1,\dots,s_n) \\
		&=\prod_{k=1}^n\frac{1}{(e^{2\pi is_k}-1)\Gamma(s_k)}\\
		&\times\int_{\mathcal{C}^n}\prod_{j=1}^n\lim_{\substack{c\rightarrow1\\c\in\mathbb{R}\setminus\{1\}}}\frac{1}{1-c}\left(\frac{1}{\exp{\left(\sum_{k=j}^n t_k\right)}-1}-\frac{c}{\exp{\left(c\sum_{k=j}^n t_k\right)}-1}\right)\prod_{k=1}^n t_k^{s_k-1}d t_k.
	\end{align*}
\end{prop}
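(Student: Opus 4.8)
The plan is to establish two things in turn. First, that the limit $c\to1$ in the defining formula above may be moved inside the contour integral --- this is precisely the gap between the definition and the integral expression asserted here. Second, that the integral obtained after taking the limit inside is entire in $(s_1,\dots,s_n)\in\mathbb{C}^{n}$.

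For the first point, I would fix a compact set of parameters and, for each index $j$, apply the mean value theorem to $\varphi(c):=c\big/\bigl(\exp(cu)-1\bigr)$ with $u=\sum_{k=j}^{n}t_k$, so that
\[
\frac{1}{1-c}\left(\frac{1}{\exp\bigl(\sum_{k=j}^{n}t_k\bigr)-1}-\frac{c}{\exp\bigl(c\sum_{k=j}^{n}t_k\bigr)-1}\right)=-\varphi'(\xi_j),\qquad \xi_j\ \text{between}\ c\ \text{and}\ 1 .
\]
A direct computation of $\varphi'$, combined with the fact that on $\mathcal{C}^{n}$ (for $\varepsilon$ small) each partial sum $\sum_{k=j}^{n}t_k$ stays in a thin sector around the positive real axis and hence away from $2\pi i(\mathbb{Z}\setminus\{0\})$, gives bounds $|\varphi'(\xi_j)|\le C$ for $|u|$ bounded and $|\varphi'(\xi_j)|\le C(1+|u|)e^{-\delta\operatorname{Re} u}$ for $\operatorname{Re} u$ large, \emph{uniformly} in $c$ in a punctured neighbourhood of $1$. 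Multiplying the $n$ such bounds by $\prod_{k}|t_k^{s_k-1}|$ yields a $c$-independent dominating function integrable over $\mathcal{C}^{n}$, so dominated convergence applies; since $\frac{1-c^{m}}{1-c}=1+c+\cdots+c^{m-1}\to m$, each factor tends to $\sum_{m\ge1}mB_m\frac{u^{m-1}}{m!}=\frac{d}{du}\bigl(\frac{u}{e^{u}-1}\bigr)\big|_{u=\sum_{k=j}^{n}t_k}$, and the limit passes inside and distributes over the finite product, producing the integrand in the statement.

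For the second point, write $g(u):=\frac{d}{du}\bigl(\frac{u}{e^{u}-1}\bigr)$, holomorphic on $\mathbb{C}\setminus 2\pi i(\mathbb{Z}\setminus\{0\})$ --- in particular holomorphic and bounded near $u=0$ and exponentially decaying as $u\to+\infty$ --- so that $\prod_{j=1}^{n}g\bigl(\sum_{k=j}^{n}t_k\bigr)$ is holomorphic on a neighbourhood of $\mathcal{C}^{n}$. Splitting each one-dimensional integral $\int_{\mathcal{C}}(\cdots)\,t_k^{s_k-1}\,dt_k$ into its two half-line pieces and the circle $|t_k|=\varepsilon$, boundedness near $0$ and exponential decay at $+\infty$ make every piece converge for \emph{all} $s_k\in\mathbb{C}$, locally uniformly, so iterating over $k$ (Fubini) shows the $\mathcal{C}^{n}$-integral is entire in $(s_1,\dots,s_n)$. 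Finally I would check that this integral cancels the poles of the prefactor $\prod_k\frac{1}{(e^{2\pi is_k}-1)\Gamma(s_k)}$: these are simple and located exactly at $s_k\in\mathbb{N}$ (the zeros of $1/\Gamma$ already absorbing the poles of $1/(e^{2\pi is_k}-1)$ at $s_k\in\mathbb{Z}_{\le0}$), and at $s_k=m\in\mathbb{N}$ the power $t_k^{m-1}$ is single-valued, so the two half-line contributions in the $t_k$-integral cancel and only $\oint_{|t_k|=\varepsilon}g(\cdots)\,t_k^{m-1}\,dt_k$ survives, which vanishes by Cauchy's theorem since $g(\cdots)\,t_k^{m-1}$ is holomorphic near $t_k=0$. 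Thus $\zeta_{\scalebox{0.5}{\rm FKMT}}(s_1,\dots,s_n)$ extends to an entire function on $\mathbb{C}^{n}$.

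I expect the main obstacle to be the uniform-in-$c$ estimate on the non-compact part of $\mathcal{C}^{n}$ needed for dominated convergence: the power-series form $\tilde{\mathfrak{H}}_n/(1-c)^n=\prod_j\sum_{m\ge1}\frac{1-c^{m}}{1-c}B_m\frac{(\sum_{k=j}^{n}t_k)^{m-1}}{m!}$ only converges for $|\sum_{k=j}^{n}t_k|<2\pi$, so controlling the tails really requires the closed form together with the mean value theorem as above. Once the limiting integrand is identified, the entirety is a several-variable version of Riemann's classical Hankel-contour argument for $\zeta(s)$ and should be routine.
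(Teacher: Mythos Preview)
The paper does not prove this proposition. It is stated without proof as a quotation of Theorem~3.4 in \cite{FKMT}; the surrounding section merely \emph{recalls} the desingularization construction and its basic properties before moving on to the special values. So there is no proof in the paper to compare yours against.

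That said, your strategy is reasonable and follows the expected lines of the original argument: first interchange the limit $c\to1$ with the contour integral by a dominated-convergence bound uniform in $c$, then recognise the limiting integrand as $\prod_j g\bigl(\sum_{k=j}^n t_k\bigr)$ with $g(u)=\frac{d}{du}\bigl(\frac{u}{e^u-1}\bigr)$, and finally run the classical Hankel-contour analysis variable by variable to see that the prefactor's poles at $s_k\in\mathbb N$ are cancelled. Two small points to tighten: (i) the mean value theorem in the form you invoke does not directly apply to the complex-valued map $c\mapsto c/(e^{cu}-1)$ (there is no single $\xi_j$); use instead the integral representation $\frac{\varphi(1)-\varphi(c)}{1-c}=\int_0^1\varphi'\bigl(c+\tau(1-c)\bigr)\,d\tau$ and bound by $\sup_{c'\in[1/2,3/2]}|\varphi'(c')|$, which gives exactly the uniform majorant you want; (ii) the sign in your display should read $+\varphi'(\xi_j)$ rather than $-\varphi'(\xi_j)$, consistent with your correct identification of the limit as $g(u)$. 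With these adjustments the argument goes through.
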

We explain another remarkable properties of the desingularized MZF. For indeterminates $u_j$ and $v_j\ (1\leq j\leq n)$, we set
\begin{equation*}
	\mathcal{G}((u_j),(v_j)):=\prod_{j=1}^n\left(1-(u_jv_j+\cdots+u_n v_n)(v_j^{-1}-v_{j-1}^{-1})\right)
\end{equation*}
with the convention $v_0^{-1}:=0$, and we define the set of integers $\{a_{\mbox{\boldmath {\footnotesize$l$}},\mbox{\boldmath {\footnotesize$m$}}}\}$ by
\begin{equation*}
	\mathcal{G}((u_j),(v_j))=\sum_{\substack{\mbox{\boldmath {\footnotesize$l$}}=(l_j)\in\mathbb{N}_0^n\\ \mbox{\boldmath {\footnotesize$m$}}=(m_j)\in\mathbb{Z}^n \\ \sum_{j=1}^n m_j=0}}a_{\mbox{\boldmath {\footnotesize$l$}},\mbox{\boldmath {\footnotesize$m$}}}\prod_{j=1}^nu_j^{l_j}v_j^{m_j}.
\end{equation*}
Another remarkable properties of the desingularized MZF is that the function is given by a finite `linear' combination of MZFs.
\begin{prop}[\cite{FKMT} Theorem 3.8]
	For $s_1,\dots,s_n \in \mathbb{C}$, we have the following equality between meromorphic functions of the complex variables $(s_1,\ldots,s_n)$.
	\begin{equation*}
		\zeta_{\scalebox{0.5}{\rm FKMT}}(s_1,\dots,s_n)=\sum_{\substack{\mbox{\boldmath {\footnotesize$l$}}=(l_j)\in\mathbb{N}_0^n\\ \mbox{\boldmath {\footnotesize$m$}}=(m_j)\in\mathbb{Z}^n \\ \sum_{j=1}^n m_j=0}}a_{\mbox{\boldmath {\footnotesize$l$}},\mbox{\boldmath {\footnotesize$m$}}}\left(\prod_{j=1}^n(s_j)_{l_j}\right)\zeta(s_1+m_1,\dots,s_n+m_n).
	\end{equation*}
	Here, $(s)_{k}$ is the {\it Pochhammer symbol}, that is, for $k\in\mathbb{N}$ and $s\in\mathbb{C}$ $(s)_{0}:=1$ and $(s)_k:=s(s+1)\cdots(s+k-1)$.
\end{prop}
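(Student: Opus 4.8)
\emph{Sketch of a proof.} The plan is to start from the integral expression for $\zeta_{\scalebox{0.5}{\rm FKMT}}$ recalled above (\cite{FKMT}, Theorem~3.4), to unfold the desingularized integrand into an absolutely convergent multiple series when $\Re(s_1),\dots,\Re(s_n)$ are large, to integrate term by term, and then to reorganise the resulting multiple sums into multiple zeta functions, reading off the coefficients from $\mathcal{G}$. Set $T_j:=t_j+\cdots+t_n$ and $g(T):=1/(e^{T}-1)$. A direct computation shows that the desingularized integrand factor equals
\[
\lim_{\substack{c\to1\\ c\in\mathbb{R}\setminus\{1\}}}\frac{1}{1-c}\left(\frac{1}{e^{T_j}-1}-\frac{c}{e^{cT_j}-1}\right)=\frac{d}{dT_j}\bigl(T_j\,g(T_j)\bigr)=g(T_j)+T_j\,g'(T_j),
\]
which is holomorphic at $T_j=0$; hence, for $\Re(s_k)>0$, the circular parts of $\mathcal{C}^n$ contribute nothing as the radius $\varepsilon$ tends to $0$, and the cited representation reduces to
\[
\zeta_{\scalebox{0.5}{\rm FKMT}}(s_1,\dots,s_n)=\prod_{k=1}^n\frac{1}{\Gamma(s_k)}\int_{(0,\infty)^n}\prod_{j=1}^n\bigl(g(T_j)+T_j\,g'(T_j)\bigr)\prod_{k=1}^n t_k^{s_k-1}\,dt_k .
\]

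Next, using $g(T)=\sum_{m\ge1}e^{-mT}$, $g'(T)=-\sum_{m\ge1}m\,e^{-mT}$ and $T_j=\sum_{k\ge j}t_k$, I would expand the product over $j$: a term is specified by a subset $S\subseteq\{1,\dots,n\}$ (the indices $j$ at which the summand $T_j g'(T_j)$ is chosen) together with a map $\kappa\colon S\to\{1,\dots,n\}$ with $\kappa(j)\ge j$ (distributing each factor $T_j=\sum_{k\ge j}t_k$). Writing $d_k:=\#\kappa^{-1}(k)$ and $M_k:=m_1+\cdots+m_k$, and using $\int_0^\infty t^{s+d-1}e^{-Mt}\,dt=(s)_d\,\Gamma(s)\,M^{-s-d}$, term-by-term integration (legitimate by absolute convergence for $\Re(s_k)$ large) yields
\[
\zeta_{\scalebox{0.5}{\rm FKMT}}(s_1,\dots,s_n)=\sum_{S,\kappa}\Bigl(\prod_{k=1}^n(s_k)_{d_k}\Bigr)\sum_{m_1,\dots,m_n\ge1}\Bigl(\prod_{j\in S}(-m_j)\Bigr)\prod_{k=1}^n M_k^{-s_k-d_k}.
\]

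I would then substitute $-m_j=M_{j-1}-M_j$ (with $M_0:=0$) and expand $\prod_{j\in S}(M_{j-1}-M_j)$ over maps $\sigma\colon S\to\{0,\dots,n\}$ with $\sigma(j)\in\{j-1,j\}$, each summand carrying the sign $+1$ if $\sigma(j)=j-1$ and $-1$ if $\sigma(j)=j$ (summands with some $\sigma(j)=0$ vanishing). With $e_k:=\#\sigma^{-1}(k)$ one has $\sum_{m_1,\dots,m_n\ge1}\prod_k M_k^{-s_k-d_k+e_k}=\zeta(s_1+d_1-e_1,\dots,s_n+d_n-e_n)$, so $\zeta_{\scalebox{0.5}{\rm FKMT}}(s_1,\dots,s_n)$ becomes a finite signed sum of terms $\prod_{k=1}^n(s_k)_{d_k}\cdot\zeta(s_1+d_1-e_1,\dots,s_n+d_n-e_n)$, which is of the asserted shape with $l_k=d_k$ and $m_k=d_k-e_k$ (so $\sum_k m_k=|S|-|S|=0$). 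It then remains to identify the coefficient of $\prod_k(s_k)_{l_k}\,\zeta(s_1+m_1,\dots,s_n+m_n)$ with $a_{\mathbf{l},\mathbf{m}}$: choosing $S$ corresponds to choosing, for each $j$, either the factor $1$ or the factor $-(u_jv_j+\cdots+u_nv_n)(v_j^{-1}-v_{j-1}^{-1})$ in $\mathcal{G}$; for $j\in S$ the value $\kappa(j)=k$ selects the monomial $u_kv_k$ (accounting for the $u_k$-exponent $d_k=l_k$ and for a $+1$ in the $v_k$-exponent), while $\sigma(j)$ selects $v_{j-1}^{-1}$ or $-v_j^{-1}$ out of $v_{j-1}^{-1}-v_j^{-1}=-(v_j^{-1}-v_{j-1}^{-1})$ (accounting for the remaining $-1$'s in the $v$-exponents and for the sign). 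This reproduces precisely the monomial expansion of $\mathcal{G}$, whence the signed count of triples $(S,\kappa,\sigma)$ producing a prescribed $(\mathbf{l},\mathbf{m})$ equals $a_{\mathbf{l},\mathbf{m}}$. Finally, since the left-hand side is entire and the right-hand side is a finite combination of meromorphic functions, and the two agree for $\Re(s_k)$ large, they agree on all of $\mathbb{C}^n$ by analytic continuation.

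I expect the combinatorial identification in the last step to be the main obstacle: matching the threefold datum $(S,\kappa,\sigma)$ against the factorised form of $\mathcal{G}$ while tracking all signs and the convention $v_0^{-1}=0$ correctly. The analytic ingredients — holomorphy of the desingularized integrand at $T_j=0$ (hence the reduction to $(0,\infty)^n$), absolute convergence for $\Re(s_k)\gg0$ (hence the legitimacy of term-by-term integration), finiteness of the $(S,\kappa,\sigma)$-sum (so that only finitely many multiple zeta functions occur), and analytic continuation — are comparatively routine once the integral expression of \cite{FKMT}, Theorem~3.4, is available.
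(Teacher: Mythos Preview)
The paper does not give its own proof of this proposition: it is quoted verbatim from \cite{FKMT}, Theorem~3.8, and the surrounding section only recalls the result in order to motivate the desingularized values. So there is no ``paper's proof'' to compare against here.

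That said, your sketch is sound and is the natural argument one would expect (and, in outline, the one carried out in \cite{FKMT}). The key analytic identities --- the evaluation of the $c\to1$ limit as $\frac{d}{dT}(Tg(T))=g(T)+Tg'(T)$, the collapse of the contour integral to $(0,\infty)^n$ for $\Re(s_k)$ large, the change of variables $M_k=m_1+\cdots+m_k$ identifying the resulting multiple sum with $\zeta(s_1+d_1-e_1,\dots,s_n+d_n-e_n)$, and the Gamma evaluation $\Gamma(s_k+d_k)/\Gamma(s_k)=(s_k)_{d_k}$ --- are all correct. Your bookkeeping of the combinatorics is also right: the triple $(S,\kappa,\sigma)$ is exactly the data indexing a monomial in the expansion of $\mathcal{G}((u_j),(v_j))=\prod_j\bigl(1-(u_jv_j+\cdots+u_nv_n)(v_j^{-1}-v_{j-1}^{-1})\bigr)$, with $S$ recording which factors contribute nontrivially, $\kappa(j)$ recording the chosen $u_kv_k$, and $\sigma(j)\in\{j-1,j\}$ recording which of $v_{j-1}^{-1}$, $-v_j^{-1}$ is selected; the convention $v_0^{-1}=0$ kills precisely the $\sigma(j)=0$ terms, matching $M_0=0$. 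The exponent count $l_k=d_k=\#\kappa^{-1}(k)$ and $m_k=d_k-e_k$ with $\sum_k m_k=|S|-|S|=0$ is correct, and the signs line up as you describe. The only thing I would add when writing this up in full is an explicit justification of absolute convergence (e.g.\ $\Re(s_k)>1$ for all $k$ and $\Re(s_{j}+\cdots+s_n)$ large enough) before exchanging sum and integral, and a remark that the $(S,\kappa,\sigma)$-sum is manifestly finite so the right-hand side is a finite combination of meromorphic functions; both are routine, as you note.
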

\subsection{Desingularized values}
We review the desingularized values and its explicit formula (Proposition \ref{prop:1.1.1}), and then we give a recurrence formula of the desingularized values (Corollary \ref{cor:1.1.1}).

The desingularized value is given as the special value at the integer points with non-positive arguments of an entire function:

\begin{definition}\label{def:1.2.1}
	For $k_1,\dots,k_n \in \mathbb{N}_0$, the {\it desingularized value} $\zeta_{\scalebox{0.5}{\rm FKMT}}(-k_1,\dots,-k_n)\in\mathbb{C}$ is defined to be the special value of desingularized MZF $\zeta_{\scalebox{0.5}{\rm FKMT}}(s_1,\dots,s_n)$ at $(s_1,\dots,s_n)=(-k_1,\dots,-k_n)$.
\end{definition}
The generating function $Z_{\scalebox{0.5}{\rm FKMT}}(t_1,\dots,t_n)$ of $\zeta_{\scalebox{0.5}{\rm FKMT}}(-k_1,\dots,-k_n)$ in the equation (\ref{eqn:0.4}) is explicitly calculated as follows.
\begin{prop}[\cite{FKMT} Theorem 3.7]\label{prop:1.1.1}
	We have
	\begin{equation*}
		Z_{\scalebox{0.5}{\rm FKMT}}(t_1,\dots,t_n) = \prod_{i=1}^n\frac{(1-t_i-\cdots-t_n)e^{t_i+\cdots+t_n}-1}{(e^{t_i+\cdots+t_n}-1)^2}.
	\end{equation*}
	In terms of $\zeta_{\scalebox{0.5}{\rm FKMT}}(-k_1,\dots,-k_n)$ for $k_1,\dots,k_n\in\mathbb{N}_0$, the above equation is reformulated to
	\begin{equation}
		\zeta_{\scalebox{0.5}{\rm FKMT}}(-k_1,\dots,-k_n)=(-1)^{k_1+\cdots+k_n}\sum_{\substack{\nu_{1i}+\cdots+\nu_{ii}=k_i\\1\leq i\leq n}}\prod_{i=1}^n\frac{k_i!}{\prod_{j=i}^n\nu_{ij}!}B_{\nu_{ii}+\cdots+\nu_{in}+1}.
	\end{equation}
\end{prop}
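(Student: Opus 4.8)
The plan is to evaluate the integral representation of the desingularized MZF at the points $(s_1,\dots,s_n)=(-k_1,\dots,-k_n)$ by a Riemann-style contour argument, read off the generating function, and then expand it into Bernoulli numbers. I start from the entire-function integral expression for $\zeta_{\scalebox{0.5}{\rm FKMT}}(s_1,\dots,s_n)$ recalled above (\cite{FKMT} Theorem 3.4), in which the $c\to1$ limit already sits inside the integral, factor by factor. Using the Bernoulli-series form of $\tilde{\mathfrak{H}}_n$ together with $\lim_{c\to1}(1-c^m)/(1-c)=m$, this limit evaluates, for a single variable $T$, to
\begin{equation*}
	F(T):=\lim_{\substack{c\to1\\ c\in\mathbb{R}\setminus\{1\}}}\frac{1}{1-c}\left(\frac{1}{e^{T}-1}-\frac{c}{e^{cT}-1}\right)=\sum_{m\geq1}\frac{m B_m}{m!}T^{m-1}=\sum_{m\geq0}\frac{B_{m+1}}{m!}T^m,
\end{equation*}
and since $\sum_{m\geq0}\frac{B_{m+1}}{m!}T^m=\frac{d}{dT}\!\left(\frac{T}{e^{T}-1}\right)=\frac{(1-T)e^{T}-1}{(e^{T}-1)^2}$, the integrand of the representation is exactly $\prod_{j=1}^n F(t_j+\cdots+t_n)$, a power series holomorphic near the origin in $(t_1,\dots,t_n)$ and having the decay along the positive real axes that the contour integral requires.

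Next comes the key step, a coefficient-extraction lemma in the spirit of Riemann's computation $\zeta(-k)=(-1)^k B_{k+1}/(k+1)$. For $g(t)=\sum_{m\geq0}g_m t^m$ holomorphic at $0$ and decaying fast enough on $[\,\varepsilon,\infty)$, the map $s\mapsto\frac{1}{(e^{2\pi is}-1)\Gamma(s)}\int_{\mathcal{C}}g(t)t^{s-1}\,dt$ is entire, and at $s=-k$ ($k\in\mathbb{N}_0$) the contributions of the top and bottom rays of $\mathcal{C}$ cancel because $t^{-k-1}$ is single-valued there, leaving only the small circle, which contributes $2\pi i\,g_k$; combined with $\frac{1}{(e^{2\pi is}-1)\Gamma(s)}\big|_{s=-k}=\frac{(-1)^k k!}{2\pi i}$ this gives the value $(-1)^k k!\,g_k$. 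Applying this in each of the $n$ independent contour variables of the product integral over $\mathcal{C}^n$ yields
\begin{equation*}
	\zeta_{\scalebox{0.5}{\rm FKMT}}(-k_1,\dots,-k_n)=(-1)^{k_1+\cdots+k_n}\,k_1!\cdots k_n!\cdot\bigl[\,t_1^{k_1}\cdots t_n^{k_n}\,\bigr]\prod_{j=1}^n F(t_j+\cdots+t_n),
\end{equation*}
where $[\,\cdot\,]$ denotes extraction of a Taylor coefficient. Substituting this into the definition (\ref{eqn:0.4}) of $Z_{\scalebox{0.5}{\rm FKMT}}$ makes every factorial and every sign cancel, leaving $Z_{\scalebox{0.5}{\rm FKMT}}(t_1,\dots,t_n)=\prod_{j=1}^n F(t_j+\cdots+t_n)$, which is the first asserted identity.

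For the Bernoulli-number reformulation I would expand each factor $F(t_i+\cdots+t_n)=\sum_{m_i\geq0}\frac{B_{m_i+1}}{m_i!}(t_i+\cdots+t_n)^{m_i}$ by the multinomial theorem, writing $\nu_{ij}$ ($i\leq j$) for the exponent of $t_j$ produced by the $i$-th factor, so that $\nu_{ii}+\cdots+\nu_{in}=m_i$ and the contribution of a given index array to $\prod_j F(t_j+\cdots+t_n)$ is $\prod_{i=1}^n\frac{B_{\nu_{ii}+\cdots+\nu_{in}+1}}{\prod_{j=i}^n\nu_{ij}!}\prod_{i\leq j}t_j^{\nu_{ij}}$. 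Extracting the coefficient of $t_1^{k_1}\cdots t_n^{k_n}$ imposes exactly the condition that the total exponent of each $t_i$ equal $k_i$, i.e.\ $\nu_{1i}+\cdots+\nu_{ii}=k_i$ for $1\leq i\leq n$; multiplying the resulting coefficient by $(-1)^{k_1+\cdots+k_n}k_1!\cdots k_n!$ as dictated by the previous display then gives precisely the stated formula. The only real work lies in the analytic bookkeeping of the coefficient-extraction step—that the contour integral is entire in each $s_k$, that the two rays cancel at negative integers, and that the power series may be integrated term by term over the small circles—but all of this is classical and is already essentially packaged in the entire-function integral representation quoted above.
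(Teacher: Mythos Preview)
Your argument is correct. The computation of the inner limit $F(T)=\frac{d}{dT}\bigl(T/(e^{T}-1)\bigr)$, the single-variable Hankel evaluation $\frac{1}{(e^{2\pi is}-1)\Gamma(s)}\int_{\mathcal{C}}g(t)\,t^{s-1}\,dt\big|_{s=-k}=(-1)^{k}k!\,g_k$, its iteration over $\mathcal{C}^n$, and the multinomial expansion are all valid; the indices $\nu_{ij}$ match the stated constraints and the signs and factorials cancel exactly as you describe. One small point worth making explicit is that the iterated contour argument needs $F(t_j+\cdots+t_n)$ to be analytic when several of the $t_k$ sit on the small circles simultaneously; since the only finite singularities of $F$ lie at $T\in 2\pi i\mathbb{Z}\setminus\{0\}$, choosing $\varepsilon<2\pi/n$ handles this, and the exponential decay of $F$ along the positive real direction takes care of the rays.

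As for comparison with the paper: this proposition is not proved here at all---it is quoted from \cite{FKMT} (their Theorem~3.7), so there is no in-paper argument to set yours against. Your route is in fact the natural one starting from the entire-function integral representation (the paper's Proposition~1.2, i.e.\ \cite{FKMT} Theorem~3.4), and it is essentially how the result is obtained in \cite{FKMT} as well: evaluate the Hankel integrals at non-positive integers to pick out Taylor coefficients of $\prod_j F(t_j+\cdots+t_n)$, then identify that product as the generating function. So while you have supplied a genuine proof where the present paper only cites one, your approach is not a different route but rather a reconstruction of the original FKMT argument.
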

By the above proposition we have the following recurrence formula:
\begin{cor}\label{cor:1.1.1}
	\begin{equation}\label{eqn:1.2.1}
		Z_{\scalebox{0.5}{\rm FKMT}}(t_1,\dots,t_n) = Z_{\scalebox{0.5}{\rm FKMT}}(t_2,\dots,t_n)\cdot Z_{\scalebox{0.5}{\rm FKMT}}(t_1+\cdots+t_n) \quad(n \in \mathbb{N}).
	\end{equation}
	In terms of $\zeta_{\scalebox{0.5}{\rm FKMT}}(-k_1,\dots,-k_n)$, the equation {\rm (\ref{eqn:1.2.1})} is reformulated to
	\begin{equation}\label{eqn:1.2.2}
		\zeta_{\scalebox{0.5}{\rm FKMT}}(-k_1,\dots,-k_n) = \sum_{\substack{i_2 + j_2=k_2\\\scalebox{0.5}{\rotatebox{90}{$\cdots$}}\\i_n + j_n=k_n}}\prod_{a=2}^n\binom{k_a}{i_a}\zeta_{\scalebox{0.5}{\rm FKMT}}(-i_2,\dots,-i_n)\zeta_{\scalebox{0.5}{\rm FKMT}}(-k_1-j_2-\dots-j_n)
	\end{equation}
	for $k_1,\dots,k_n \in \mathbb{N}_0$. Here we use $\binom{k_a}{i_a}:=\frac{k_a!}{i_a!(k_a-i_a)!}$.
\end{cor}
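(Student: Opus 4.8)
The plan is to deduce Corollary \ref{cor:1.1.1} directly from the explicit product formula for $Z_{\scalebox{0.5}{\rm FKMT}}$ given in Proposition \ref{prop:1.1.1}. Introduce the single-variable function
\begin{equation*}
	f(t):=\frac{(1-t)e^{t}-1}{(e^{t}-1)^2},
\end{equation*}
so that by Proposition \ref{prop:1.1.1} one has $Z_{\scalebox{0.5}{\rm FKMT}}(t_1,\dots,t_n)=\prod_{i=1}^n f(t_i+\cdots+t_n)$. In particular $Z_{\scalebox{0.5}{\rm FKMT}}(t)=f(t)$ in the one-variable case. The recurrence \eqref{eqn:1.2.1} is then the trivial observation that in the product $\prod_{i=1}^n f(t_i+\cdots+t_n)$ the factors with index $i\geq 2$ depend only on $t_2,\dots,t_n$ and reassemble into $Z_{\scalebox{0.5}{\rm FKMT}}(t_2,\dots,t_n)$, while the single factor $i=1$ is $f(t_1+\cdots+t_n)=Z_{\scalebox{0.5}{\rm FKMT}}(t_1+\cdots+t_n)$; that is, $Z_{\scalebox{0.5}{\rm FKMT}}(t_1,\dots,t_n)=Z_{\scalebox{0.5}{\rm FKMT}}(t_2,\dots,t_n)\cdot Z_{\scalebox{0.5}{\rm FKMT}}(t_1+\cdots+t_n)$, as desired. (For $n=1$ this reads $Z_{\scalebox{0.5}{\rm FKMT}}(t_1)=Z_{\scalebox{0.5}{\rm FKMT}}(\emptyset)\cdot Z_{\scalebox{0.5}{\rm FKMT}}(t_1)$ with the empty product $Z_{\scalebox{0.5}{\rm FKMT}}(\emptyset)=1$, which is consistent.)

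It remains to translate the identity of generating functions \eqref{eqn:1.2.1} into the stated identity \eqref{eqn:1.2.2} among the coefficients $\zeta_{\scalebox{0.5}{\rm FKMT}}(-k_1,\dots,-k_n)$. Recall from \eqref{eqn:0.4} that
\begin{equation*}
	Z_{\scalebox{0.5}{\rm FKMT}}(t_1,\dots,t_n)=\sum_{k_1,\dots,k_n\geq 0}\frac{(-t_1)^{k_1}\cdots(-t_n)^{k_n}}{k_1!\cdots k_n!}\zeta_{\scalebox{0.5}{\rm FKMT}}(-k_1,\dots,-k_n).
\end{equation*}
The plan is to expand both sides of \eqref{eqn:1.2.1}. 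On the right-hand side, $Z_{\scalebox{0.5}{\rm FKMT}}(t_2,\dots,t_n)$ is expanded in the monomials $t_2^{i_2}\cdots t_n^{i_n}$, and $Z_{\scalebox{0.5}{\rm FKMT}}(t_1+\cdots+t_n)$ is expanded first as $\sum_{m\geq 0}\frac{(-1)^m}{m!}\zeta_{\scalebox{0.5}{\rm FKMT}}(-m)(t_1+\cdots+t_n)^m$ and then via the multinomial theorem into monomials in $t_1,\dots,t_n$. Collecting the coefficient of $\frac{(-t_1)^{k_1}\cdots(-t_n)^{k_n}}{k_1!\cdots k_n!}$ on both sides, the variable $t_1$ forces the $m$-th power of $(t_1+\cdots+t_n)$ to supply exactly $t_1^{k_1}$ together with $t_a^{j_a}$ for $a\geq 2$ with $m=k_1+j_2+\cdots+j_n$, while the factor $Z_{\scalebox{0.5}{\rm FKMT}}(t_2,\dots,t_n)$ supplies $t_a^{i_a}$ with $i_a+j_a=k_a$ for each $a\geq 2$. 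A short bookkeeping of the factorials — the multinomial coefficient $\binom{m}{k_1,j_2,\dots,j_n}$ against the normalizing $\tfrac{1}{k_1!\cdots k_n!}$ and the $\tfrac{1}{i_a!}$ coming from $Z_{\scalebox{0.5}{\rm FKMT}}(t_2,\dots,t_n)$ — collapses to $\prod_{a=2}^n\binom{k_a}{i_a}$, with the signs matching because $(-1)^{k_1+j_2+\cdots+j_n}\cdot(-1)^{i_2+\cdots+i_n}=(-1)^{k_1+\cdots+k_n}$. This yields precisely \eqref{eqn:1.2.2}.

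I do not anticipate a genuine obstacle here: the content is entirely contained in Proposition \ref{prop:1.1.1}, and both halves of the corollary are formal consequences. The only point requiring care is the factorial/sign bookkeeping in passing from \eqref{eqn:1.2.1} to \eqref{eqn:1.2.2}; one must be careful to handle the edge cases ($n=1$, and terms with some $i_a=0$ or $j_a=0$) so that the empty-product and $\binom{k_a}{0}=1$ conventions are applied consistently. A clean way to organize the computation is to substitute $u_i=-t_i$ throughout, so that $Z_{\scalebox{0.5}{\rm FKMT}}(t_1,\dots,t_n)=\sum \frac{u_1^{k_1}\cdots u_n^{k_n}}{k_1!\cdots k_n!}\zeta_{\scalebox{0.5}{\rm FKMT}}(-k_1,\dots,-k_n)$ is a plain exponential generating series and \eqref{eqn:1.2.1} becomes the statement that the EGS of the multi-indexed sequence factors as an EGS product in the described way; then \eqref{eqn:1.2.2} is the standard convolution identity for such products, with the binomial coefficients arising exactly as in the Cauchy product of exponential generating functions.
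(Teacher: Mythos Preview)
Your proposal is correct and follows essentially the same approach as the paper: the paper simply states that the corollary follows ``by the above proposition'' (Proposition \ref{prop:1.1.1}), and your argument spells out exactly how --- splitting off the $i=1$ factor in the product formula to get \eqref{eqn:1.2.1}, then comparing coefficients via the multinomial expansion to obtain \eqref{eqn:1.2.2}.
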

In \S \ref{sec:3}, we will show that the same formula as (\ref{eqn:1.2.2}) holds for the renormalized value $\zeta_{\scalebox{0.5}{\rm EMS}}(-k_1,\dots,-k_n)$ in the equation (\ref{eqn:3.2.1}).
\section{Renormalizations}\label{sec:2}
In this section, we recall the renormalization procedure to define renormalized values which is introduced by Ebrahimi-Fard, Manchon and Singer. In \S 2.1, we start by recalling their framework of a Hopf algebra generated by words and in \S 2.2 we show an explicit formula in Proposition \ref{cor:2.1.1} to calculate the reduced coproduct $\tilde{\Delta}_0$. This proposition is essential to show the recurrence formula of $\zeta_{\scalebox{0.5}{\rm EMS}}(-k_1,\dots, -k_n)$ in \S3. In \S2.3 we explain the algebraic Birkhoff decomposition \`a la Connes and Kreimer which is required to define renormalized values.
\subsection{Algebraic frameworks}
We follow the conventions of \cite{EMS1}. Let $X_0:=\{j,d,y\}$ be the set of three elements $j$, $d$ and $y$. Let $W_0$ be the associative monoid, with the empty word {\bf 1} as a unit, generated by $X_0$ with the rule $j d=d j={\bf 1}$. Any element $w\in W_0$ can be uniquely represented by
$$w=j^{k_1}y\cdots j^{k_n}$$
for $k_1,\cdots,k_n\in\mathbb{Z}$. An element of $W_0$ is called a {\it word}. Put $Y_0:=W_0y\cup\{{\bf 1}\}$ and we call an element of $Y_0$ {\it admissible}. We denote the $\mathbb{Q}$-linear space $\mathcal{A}_0$ generated by $W_0$ by $\mathcal{A}_0:=\langle W_0\rangle_{\mathbb{Q}}$. The linear space $\mathcal{A}_0$ is naturally equipped with a structure of a non-commutative algebra. We equip this $\mathcal{A}_0$ with a new product $\shuffle_0$ :$\mathcal{A}_0\otimes\mathcal{A}_0\rightarrow\mathcal{A}_0$ which is a $\mathbb{Q}$-linear map recursively defined by
\begin{align*}
	{\bf 1}\shuffle_0w&:=w\shuffle_0{\bf 1} := w\quad (w\in W_0),\\
	y u\shuffle_0v&:= u\shuffle_0y v:= y(u\ \shuffle_0\ v)\quad (u,v \in W_0),\\
	j u\shuffle_0j v&:= j(u\shuffle_0j v) +j(j u\shuffle_0v) \quad(u,v \in W_0),\\
	d u\shuffle_0d v&:= d(u\shuffle_0d v) -u\shuffle_0d^2v \quad(u,v \in W_0).
\end{align*}
Then $(\mathcal{A}_0, \shuffle_0)$ forms a unitary, nonassociative, noncommutative $\mathbb{Q}$-algebra. We define 
\begin{equation*}
	\mathcal{T}:= \langle \{j^{k_1}y\cdots j^{k_{n-1}}y j^{k_n}\in W_0\ | \ k_n\neq0, n\in\mathbb{N}\}\rangle_{\mathbb{Q}},
\end{equation*}
that is, to be the linear subspace of $\mathcal{A}_0$ linearly generated by words ending in $d$ or $j$ and 
\begin{equation*}
	\mathcal{L}:= \langle j^k\{d(u \ \shuffle_0 \ v) -d u \ \shuffle_0 \ v - u \ \shuffle_0 \ d v\} \ |\ k \in \mathbb{Z},\ u,v \in W_0y\ \rangle_{(\mathcal{A}_0,\shuffle_0)},
\end{equation*}
that is, to be the two-sided ideal of $(\mathcal{A}_0,\shuffle_0)$ algebraically generated by the above elements. The subspace $\mathcal{T}$ forms a two-sided ideal of $\mathcal{A}_0$ by \cite{EMS1} Lemma 3.4. We define the quotient algebra
\begin{equation*}
	\mathcal{B}_0':=\mathcal{A}_0/(\mathcal{T}+\mathcal{L}).
\end{equation*}
We consider the map 
\begin{equation}\label{eqn:2.1.6}
\zeta^{\shuffle}_t:\mathcal{B}_0'\rightarrow \mathbb{Q}[[t]]
\end{equation}
by $\zeta^{\shuffle}_t({\bf 1}):=1$ and for $k_1,\dots,k_n\in\mathbb{Z}$,
	$$\zeta^{\shuffle}_t(j^{k_n}y\cdots j^{k_1}y):={\rm Li}_{k_1,\cdots,k_n}(t).$$
Here ${\rm Li}_{k_1,\cdots,k_n}(t)$ is the {\it multiple polylogarithm} defined by
	$${\rm Li}_{k_1,\cdots,k_n}(t):=\displaystyle\sum_{0<m_1<\cdots<m_n}\frac{t^{m_n}}{m_1^{k_1}\cdots m_n^{k_n}}.$$
\begin{lem}\label{lem:2.1.1}
	The map $\zeta^{\shuffle}_t$ is well-defined and forms an algebra homomorphism.
\end{lem}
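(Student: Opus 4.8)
The plan is to realize the defining rule of $\zeta^{\shuffle}_t$ as coming from a representation of the letters $j,d,y$ by operators on $t\mathbb{Q}[[t]]$, and then to read off well-definedness and multiplicativity from the corresponding operator identities. First I would observe that, since every admissible word has a unique standard form $j^{k_n}y\cdots j^{k_1}y$, the prescription $\mathbf{1}\mapsto 1$ and $j^{k_n}y\cdots j^{k_1}y\mapsto\mathrm{Li}_{k_1,\dots,k_n}(t)$ defines an unambiguous $\mathbb{Q}$-linear map on $\langle Y_0\rangle_{\mathbb{Q}}$, i.e.\ on $\mathcal{A}_0/\mathcal{T}$. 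Every $\mathrm{Li}_{k_1,\dots,k_n}(t)$ lies in $t\mathbb{Q}[[t]]$, on which the Euler operator $\theta:=t\,\frac{d}{dt}$ is bijective with inverse $\theta^{-1}$; inspecting the defining series gives
\begin{gather*}
\theta\,\mathrm{Li}_{k_1,\dots,k_n}(t)=\mathrm{Li}_{k_1,\dots,k_{n-1},k_n-1}(t),\qquad
\theta^{-1}\mathrm{Li}_{k_1,\dots,k_n}(t)=\mathrm{Li}_{k_1,\dots,k_{n-1},k_n+1}(t),\\
\tfrac{t}{1-t}\,\mathrm{Li}_{k_1,\dots,k_n}(t)=\mathrm{Li}_{k_1,\dots,k_n,0}(t).
\end{gather*}
Through the standard-form bookkeeping these say that prepending $d$, $j$, $y$ to a word corresponds, under $\zeta^{\shuffle}_t$, to applying $\theta$, $\theta^{-1}$, and multiplication by $\tfrac{t}{1-t}$; the only caveat is that the $j$-intertwiner is valid only where the value lies in $t\mathbb{Q}[[t]]$, which excludes just $\mathbf{1}$.

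Next I would prove the multiplicativity $\zeta^{\shuffle}_t(a\shuffle_0 b)=\zeta^{\shuffle}_t(a)\,\zeta^{\shuffle}_t(b)$ by induction along the recursion defining $\shuffle_0$, using that $\mathcal{T}$ is a $\shuffle_0$-ideal (\cite{EMS1} Lemma 3.4) to dispose of the cases in which a factor is inadmissible, and keeping $\mathbf{1}$ as a separate base case. In the inductive step each defining relation of $\shuffle_0$ becomes, after applying $\zeta^{\shuffle}_t$ and the inductive hypothesis, an identity on $t\mathbb{Q}[[t]]$: the $y$-rule $yu\shuffle_0 v=y(u\shuffle_0 v)$ corresponds to $\tfrac{t}{1-t}(fg)=(\tfrac{t}{1-t}f)g$; the $j$-rule $ju\shuffle_0 jv=j(u\shuffle_0 jv)+j(ju\shuffle_0 v)$ to $\theta^{-1}(f\cdot\theta^{-1}g)+\theta^{-1}(\theta^{-1}f\cdot g)=\theta^{-1}f\cdot\theta^{-1}g$; and the $d$-rule $du\shuffle_0 dv=d(u\shuffle_0 dv)-u\shuffle_0 d^2v$ to $\theta(f\cdot\theta g)-f\cdot\theta^2g=\theta f\cdot\theta g$, the last two being immediate from the Leibniz rule and the bijectivity of $\theta$ on $t\mathbb{Q}[[t]]$. (The correction terms in the $j$- and $d$-rules are precisely what make them compatible with $\theta$ being a derivation rather than merely additive.)

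Finally I would combine multiplicativity with $\zeta^{\shuffle}_t\circ d=\theta\circ\zeta^{\shuffle}_t$ to show that $\zeta^{\shuffle}_t$ annihilates $\mathcal{L}$: a generator $j^{k}\{d(u\shuffle_0 v)-du\shuffle_0 v-u\shuffle_0 dv\}$ with $u,v\in W_0y$ is sent to a power of $\theta$ or $\theta^{-1}$ applied to $\theta(FG)-\theta(F)G-F\theta(G)$, where $F=\zeta^{\shuffle}_t(u)$ and $G=\zeta^{\shuffle}_t(v)$, and this vanishes since $\theta$ is a derivation of $\mathbb{Q}[[t]]$. Hence $\zeta^{\shuffle}_t$ descends to $\mathcal{B}_0'=\mathcal{A}_0/(\mathcal{T}+\mathcal{L})$ --- which is the asserted well-definedness --- and the descended map is an algebra homomorphism by the multiplicativity just proved. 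I expect the main obstacle to be the combinatorial bookkeeping in the multiplicativity step: organizing the induction so that it follows the $\shuffle_0$-recursion against the operator identities while correctly handling the cancellations $jd=dj=\mathbf{1}$ (in particular the mixed cases where one factor begins with $j$ and the other with $d$, which are reached only after rewriting) and the exceptional status of $\mathbf{1}$ on which $\theta^{-1}$ is undefined; the structural inputs that $\mathcal{T}$ is a $\shuffle_0$-ideal and that the $\shuffle_0$-recursion is well-founded I would import from \cite{EMS1}.
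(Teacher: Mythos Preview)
Your approach is correct and is essentially the argument of \cite{EMS1}, which is exactly what the paper invokes: it gives no independent proof but refers to \cite{EMS1} Proposition~3.5 for well-definedness and \cite{EMS1} Lemma~3.6 for the homomorphism property. Your operator realization $d\leftrightarrow\theta=t\frac{d}{dt}$, $j\leftrightarrow\theta^{-1}$, $y\leftrightarrow$ multiplication by $\frac{t}{1-t}$ on $t\mathbb{Q}[[t]]$, together with the verification of the $\shuffle_0$-recursions via the Leibniz rule for $\theta$, is precisely the mechanism behind those results, and you have correctly flagged the only genuine bookkeeping issue (the mixed $j$/$d$ leading-letter case and the handling of $\mathbf{1}$).
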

The first half of the claim of Lemma \ref{lem:2.1.1} is proved in the same way to proof of \cite{EMS1} Proposition 3.5 and the latter half of the claim of Lemma \ref{lem:2.1.1} is proved in \cite{EMS1} Lemma 3.6.

\begin{remark}
	The restriction of the shuffle product $\shuffle_0$ to admissible words at positive arguments corresponds the usual shuffle product $\shuffle$ as is proved in \cite{EMS1} Lemma 3.7. Let $\mathcal{C}:=\mathbb{Q}\oplus j\mathbb{Q}\langle j,y\rangle y$ and $\mathcal{D}:=\mathbb{Q}\oplus x_0\mathbb{Q}\langle x_0,x_1\rangle x_1$. Then two algebras $(\mathcal{C},\shuffle_0)$ and $(\mathcal{D},\shuffle)$ become isomorphic under the linear map $\Phi:(\mathcal{D},\shuffle)\rightarrow(\mathcal{C},\shuffle_0)$ by $\Phi({\bf 1}):={\bf 1}$ and for $k_1,\dots,k_n\in\mathbb{N}$ with $k_1>1$,
		$$\Phi(x_0^{k_1-1}x_1\cdots x_0^{k_n-1}x_1):=j^{k_1-1}y\cdots j^{k_n-1}y.$$
\end{remark}
 Let $L := \{d,y\}$ be the set of two elements $d$ and $y$. Let $L^*$ be the free monoid of $L$ with empty word {\bf 1} as a unit. This $L^*$ forms a submonoid of $W_0$. Put $Y := L^*y \cup \{{\bf 1}\}\subset Y_0$. So all elements of $Y$ are admissible. The {\it weight} ${\rm wt}(w)$ of a word $w \in L^*$ means the number of letters appearing in $w$ and the {\it depth} ${\rm dp}(w)$ of a word $w \in L^*$ is given by the number of $y$ appearing in $w$. We denote the free unitary, associative, noncommutative $\mathbb{Q}$-algebra of $L$ by $\mathbb{Q}\langle L\rangle$. Then $(\mathbb{Q}\langle L\rangle, \shuffle_0)$ forms a unitary, nonassociative, noncommutative $\mathbb{Q}$-subalgebra of $\mathcal{A}_0$. The algebra $\mathbb{Q}\langle L\rangle$ also forms a counital, cocommutative coalgebra. (See \cite{EMS1} \S3.3.5.) We define 
\begin{equation*}
	\mathcal{T}_-:= \langle \{wd\ | \ w \in L^*\}\rangle_{\mathbb{Q}}\ \bigl(=\mathcal{T}\cap \mathbb{Q}\langle L\rangle\bigr),
\end{equation*}
that is, to be the linear subspace of $\mathbb{Q}\langle L\rangle$ linearly generated by words ending in $d$ and 
\begin{equation*}
	\mathcal{L}_-:= \langle d^k\{d(u \ \shuffle_0 \ v) -d u \ \shuffle_0 \ v - u \ \shuffle_0 \ dv\} \ |\ k \in \mathbb{N}_0,\ u,v \in L^*\ \rangle_{(\mathbb{Q}\langle L\rangle,\shuffle_0)},
\end{equation*}
that is, to be the two-sided ideal of $(\mathbb{Q}\langle L\rangle,\shuffle_0)$ algebraically generated by the above elements. We consider the $\mathbb{Q}$-linear subspace
\begin{equation*}
	\mathcal{S}_-:=\mathcal{T}_-+\mathcal{L}_-
\end{equation*}
 of $\mathbb{Q}\langle L\rangle$ generated by $\mathcal{L}_-$ and $\mathcal{T}_-$. This $\mathcal{S}_-$ also forms a two-sided ideal as our previous $\mathcal{T}+\mathcal{L}$. We put the quotient
\begin{equation*}
	\mathcal{H}_0 := \mathbb{Q}\langle L \rangle /\mathcal{S}_-.
\end{equation*}
Actually $\mathcal{H}_0$ forms a connected, filtered, commutative and cocommutative Hopf algebra (cf. \cite{EMS1} \S3.3.6), whose product is equal to $\shuffle_0$ and whose coproduct is given by 
\begin{equation*}
	\Delta_0(w) := \sum_{\substack{S \subset [n]\\ S:{\rm admissible}}}w_S \otimes w_{\overline{S}},
\end{equation*}
for $w \in Y\setminus \{{\bf 1}\} (\subset \mathcal{H}_0)$. In the summation, $S$ may be empty. we put $n:={\rm wt}(w)$, $[n] := \{1,\dots,n\}$ and $\overline{S} := [n]\setminus S$. For $w:=x_1\cdots x_n\ (x_i\in L^*,\ i=1,\dots,n)$ and $S:=\{i_1,\dots,i_k\}$ with $1\leq i_1<\cdots<i_k\leq n$, we define $w_S:=x_{i_1}\cdots x_{i_k}$. We call the set $S$ {\it admissible} if both $w_S, w_{\overline{S}} \in Y$. See \cite{EMS1} \S 3.3.8 for combinatorial method using polygons to compute $\Delta_0(w)$. We define $\mathbb{Q}$-linear map $\tilde{\Delta}_0:\mathcal{H}_0\rightarrow\mathcal{H}_0\otimes\mathcal{H}_0$ by
\begin{equation}\label{eqn:2.1.0}
	\tilde{\Delta}_0(w) := \Delta_0(w)-1\otimes w-w\otimes 1 \quad (w \in Y),
\end{equation}
and we call $\tilde{\Delta}_0$ the {\it reduced product}.
\subsection{An explicit formula for the reduced coproduct $\tilde{\Delta}_0$}
We show an explicit formula (Proposition \ref{cor:2.1.1}) to calculate the reduced coproduct $\tilde{\Delta}_0$ in this subsection. This proposition is important to prove the recurrence formula of $\zeta_{\scalebox{0.5}{\rm EMS}}(-k_1,\dots, -k_n)$ in \S3.

We consider the bilinear map $f:\mathbb{Q}\langle L \rangle \times \mathbb{Q}\langle L \rangle^{\otimes2}\rightarrow \mathbb{Q}\langle L \rangle^{\otimes2}$ defined by 
\begin{align*}
	f({\bf 1},w\otimes w')&:=w\otimes w', \\
	f(d,w\otimes w')&:=d w\otimes w'+w\otimes d w', \\
	f(y,w\otimes w')&:=y w\otimes w'+w\otimes y w',
	\intertext{and inductively}
	f(xx_0,w\otimes w')&:=f\left(x,f(x_0,w\otimes w')\right),
\end{align*}
for $w,w' \in \mathbb{Q}\langle L \rangle$, $x_0\in L$ and $x\in L^*$. Then the following lemma holds:
\begin{lem}
	There is a map $\overline{f}:\mathbb{Q}\langle L \rangle\times\mathcal{H}_0^{\otimes2}\rightarrow\mathcal{H}_0^{\otimes2}$ which makes the following diagram commutative:
	\begin{equation*}
		\begin{array}{ccc}
			\mathbb{Q} \langle L\rangle\otimes\mathbb{Q} \langle L\rangle &\stackrel{f(x,\cdot)}{\xlongrightarrow[]{\hspace{3em}}}& \mathbb{Q} \langle L\rangle\otimes\mathbb{Q} \langle L\rangle\\
			\pi\raisebox{3mm}{\rotatebox{-90}{$\twoheadlongrightarrow$}} &&\raisebox{3mm}{\rotatebox{-90}{$\twoheadlongrightarrow$}}\pi\\
			\mathcal{H}_0\otimes\mathcal{H}_0 &\stackrel{\overline{f}(x,\cdot)}{\xlongrightarrow[]{\hspace{3em}}}&\mathcal{H}_0\otimes\mathcal{H}_0
		\end{array}
	\end{equation*}
	where $x\in \mathbb{Q}\langle L\rangle$ and $\pi:\mathbb{Q}\langle L\rangle^{\otimes2}\rightarrow\mathcal{H}_0^{\otimes2}$ is a natural projection.
\end{lem}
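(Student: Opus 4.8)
The plan is to prove that for every $x\in\mathbb{Q}\langle L\rangle$ the linear endomorphism $f(x,\cdot)$ of $\mathbb{Q}\langle L\rangle^{\otimes2}$ maps $\ker\pi$ into itself; then $\overline{f}(x,\cdot)$ is the unique induced map on the quotient and it makes the square commute. Since $\mathcal{H}_0=\mathbb{Q}\langle L\rangle/\mathcal{S}_-$ over the field $\mathbb{Q}$, one has $\ker\pi=N:=\mathcal{S}_-\otimes\mathbb{Q}\langle L\rangle+\mathbb{Q}\langle L\rangle\otimes\mathcal{S}_-$. Because $f(\cdot,\cdot)$ is linear in its first argument and $f(xx_0,\cdot)=f(x,f(x_0,\cdot))$ for $x_0\in L$, $x\in L^*$, for a word $w=x_1\cdots x_m\in L^*$ the operator $f(w,\cdot)$ is the composite $f(x_1,\cdot)\circ\cdots\circ f(x_m,\cdot)$; hence it is enough to prove $f(d,N)\subseteq N$ and $f(y,N)\subseteq N$, the general case following by composition and by linearity in the first slot.

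Fix $x_0\in\{d,y\}$. The space $N$ is spanned by elements $s\otimes w'$ and $w\otimes s$ with $s\in\mathcal{S}_-$ and $w,w'\in\mathbb{Q}\langle L\rangle$, and $f(x_0,s\otimes w')=x_0s\otimes w'+s\otimes x_0w'$, the second summand lying in $\mathcal{S}_-\otimes\mathbb{Q}\langle L\rangle\subseteq N$; the first summand lies in $N$ for every $w'$ precisely when $x_0s\in\mathcal{S}_-$. The computation for $w\otimes s$ is entirely symmetric. Thus the lemma reduces to the single claim that $\mathcal{S}_-=\mathcal{T}_-+\mathcal{L}_-$ is stable under left concatenation by $d$ and by $y$.

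For $\mathcal{T}_-$, the span of words ending in $d$, this is obvious. For $\mathcal{L}_-$ under left multiplication by $y$ one uses the identity $y\cdot(a\shuffle_0 b)=(y\cdot a)\shuffle_0 b$, valid for all $a,b\in\mathbb{Q}\langle L\rangle$ by bilinearity from the defining rule $yu\shuffle_0 v=y(u\shuffle_0 v)$. It gives $y\cdot d^k\{d(u\shuffle_0 v)-du\shuffle_0 v-u\shuffle_0 dv\}=y\shuffle_0 d^k\{d(u\shuffle_0 v)-du\shuffle_0 v-u\shuffle_0 dv\}\in\mathcal{L}_-$ for every generator, and it shows that $\{s\in\mathcal{L}_-:y\cdot s\in\mathcal{L}_-\}$ is closed under left and right $\shuffle_0$-multiplication; being a $\shuffle_0$-ideal that contains all the generators and is contained in $\mathcal{L}_-$, it equals $\mathcal{L}_-$, so $y\mathcal{L}_-\subseteq\mathcal{L}_-\subseteq\mathcal{S}_-$.

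The remaining assertion, $d\cdot\mathcal{L}_-\subseteq\mathcal{S}_-$, is the point I expect to be the main obstacle, since left multiplication by $d$ does not commute with $\shuffle_0$ in the clean way that $y$ does. The useful substitutes are: the recursion $du\shuffle_0 dv=d(u\shuffle_0 dv)-u\shuffle_0 d^2v$, which yields both $d\cdot(u\shuffle_0 dv)=du\shuffle_0 dv+u\shuffle_0 d^2v$ and the identical vanishing of the generator $d^k\{d(u\shuffle_0 v)-du\shuffle_0 v-u\shuffle_0 dv\}$ whenever $v$ begins with $d$ (so one may assume $v$ begins with $y$ or $v={\bf 1}$); the rule $d\cdot(u\shuffle_0 yv)=dy\cdot(u\shuffle_0 v)$; and, on generators themselves, $d\cdot d^k\{\cdots\}=d^{k+1}\{\cdots\}\in\mathcal{L}_-$. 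The plan is to combine these into an induction on the weight that pushes a leading $d$ through an arbitrary, possibly non-associatively parenthesized, $\shuffle_0$-product of generators and words: each reduction step produces either an element that again lies in $\mathcal{L}_-$ or a word ending in $d$, the latter being absorbed into $\mathcal{T}_-$. It may streamline the bookkeeping to factor the argument through the intermediate quotient $\mathbb{Q}\langle L\rangle/\mathcal{T}_-$ — on which $f$ descends trivially, $\mathcal{T}_-$ being left-stable — so that only $d\cdot\mathcal{L}_-$ modulo $\mathcal{T}_-$ need be controlled. Granting $d\mathcal{S}_-\subseteq\mathcal{S}_-$ and $y\mathcal{S}_-\subseteq\mathcal{S}_-$, the reductions above assemble into the lemma, with $\overline{f}(x,\cdot)$ defined as the map induced on the quotient.
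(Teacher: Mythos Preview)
Your overall reduction is exactly the paper's: show that $f(x_0,\cdot)$ preserves $\ker\pi=\mathcal{S}_-\otimes\mathbb{Q}\langle L\rangle+\mathbb{Q}\langle L\rangle\otimes\mathcal{S}_-$ by checking $x_0\mathcal{S}_-\subseteq\mathcal{S}_-$ for $x_0\in\{d,y\}$, then compose and extend linearly. Your treatment of the $y$ case via the identity $y\cdot v=y\shuffle_0 v$ also matches the paper; in fact the paper simply applies that identity to an arbitrary $v\in\mathcal{L}_-$ (so $y\cdot v=y\shuffle_0 v\in\mathcal{L}_-$ because $\mathcal{L}_-$ is a $\shuffle_0$-ideal), which makes your subsequent ideal argument superfluous.

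Where you diverge is the $d$ case, which you single out as ``the main obstacle'' and leave as an unfinished sketch. The paper disposes of it in one line (``by the definition of $\mathcal{L}_-$''), and the observation you are missing is this: the defining generators of $\mathcal{L}_-$ themselves assert that left concatenation by $d$ is a $\shuffle_0$-derivation modulo $\mathcal{L}_-$. For arbitrary $a,b\in\mathbb{Q}\langle L\rangle$ the element $d(a\shuffle_0 b)-da\shuffle_0 b-a\shuffle_0 db$ is the bilinear extension of a $k=0$ generator, hence already lies in $\mathcal{L}_-$. Combine this with the trivial fact that $d\cdot d^kg=d^{k+1}g$ is again a generator, and the set $J=\{v\in\mathcal{L}_-: dv\in\mathcal{L}_-\}$ is seen to be a two-sided $\shuffle_0$-ideal of $\mathbb{Q}\langle L\rangle$ containing all generators; hence $J=\mathcal{L}_-$, i.e.\ $d\mathcal{L}_-\subseteq\mathcal{L}_-$. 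No induction on weight, no case analysis of how $d$ interacts with the explicit recursion defining $\shuffle_0$, and no detour through $\mathcal{T}_-$ is needed.
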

\begin{proof}
	It is sufficient to prove $f(x,\ker{\pi})\subset \ker{\pi}$ for $x\in L^*$. Here $\ker{\pi}=\mathbb{Q}\langle L\rangle\otimes\mathcal{S}_-+\mathcal{S}_-\otimes\mathbb{Q}\langle L\rangle$. We show this by induction on ${\rm wt}(x)$. Let $x_0=d\ \mbox{or}\ y$ and put $v\in \mathcal{S}_-$. If $v\in \mathcal{T}_-$, it is clear that $x_0v\in \mathcal{T}_-\subset \mathcal{S}_-$. If $v\in \mathcal{L}_-$, for $x_0=d$ it is easy to see that $dv\in \mathcal{L}_-\subset \mathcal{S}_-$ by the definition of $\mathcal{L}_-$. Because $\mathcal{L}_-$ is a two-sided ideal of $(\mathbb{Q} \langle L\rangle,\shuffle_0)$, we have $y\shuffle_0v \in L_-$ for $x_0=y$. By the definition of $\shuffle_0$, we get 
	\begin{equation*}
		y\shuffle_0v=y(1\shuffle_0v)=y v \in \mathcal{L}_-\subset \mathcal{S}_-.
	\end{equation*}
	Because $\mathcal{S}_-$ is $\mathcal{L}_-+\mathcal{T}_-$, for $v\in \mathcal{S}_-$ and $x_0=d\ {\rm or}\ y$, we have $x_0v\in\mathcal{S}_-$.
	
	Let $\ w\in L^*$ and $v\in \mathcal{S}_-$. Then $x_0v\in \mathcal{S}_-$, so we have
	\begin{align*}
		\pi\left(f(x_0,w\otimes v)\right)&=\pi(x_0w\otimes v +w\otimes x_0v)\\
		&=\pi(x_0w\otimes v) +\pi(w\otimes x_0v)\\
		&=0.
	\end{align*}
	Let $w\in L^*$ and $v\in\mathcal{S}_-$. For $x\in L^*$, we get
	\begin{align*}
		\pi\bigl(f(xx_0,w\otimes v)\bigr)&=\pi\Bigl(f\bigl(x,f(x_0,w\otimes v)\bigr)\Bigr)\\
		&=\pi\bigl(f(x,x_0w\otimes v+w\otimes x_0v)\bigr)\\
		&=\pi\bigl(f(x,x_0w\otimes v)\bigr)+\pi\bigl(f(x,w\otimes x_0v)\bigr)\\
		&=0,
	\end{align*}
	by our induction assumption. 
	This also applies to the case when $w\in \mathcal{S}_-$ and $\ v\in L^*$, so the claim holds.
\end{proof}
For $x\in L^*$ and $w,w'\in Y$, we simply denote $\overline{f}(x,w\otimes w')$ by $x\bullet (w\otimes w')$ and we define 
\begin{equation*}
	w \otimes_{{\rm sym}} w' := w \otimes w' + w' \otimes w \in \mathcal{H}_0\otimes\mathcal{H}_0.
\end{equation*}
Then, the following equations hold in $\mathcal{H}_0\otimes\mathcal{H}_0$:
\begin{align}
	\label{eqn:2.1.1} d^n\bullet(w\otimes_{\rm sym} w') &= \sum_{i+j=n}\binom{n}{i}d^i w\otimes_{\rm sym} d^j w', \\
	\label{eqn:2.1.2} (d^n y)\bullet(w\otimes_{\rm sym} w') &= \sum_{i+j=n}\binom{n}{i}\sum_{\{u,v\}=\{d^iy,d^j\}}u w\otimes_{\rm sym}v w',
\end{align}
for $n \in \mathbb{N}$,\ $w,\ w' \in Y$. These equations can be proved inductively on $n \in \mathbb{N}$.
\begin{prop}\label{prop:2.1.1}
	For $w \in Y \setminus \{{\bf 1}\}$,
	\begin{align}
		\label{eqn:2.1.3}\tilde{\Delta}_0(d w) &= d\bullet\tilde{\Delta}_0(w), \\
		\label{eqn:2.1.4}\tilde{\Delta}_0(y w) &= y\bullet\tilde{\Delta}_0(w)+ y \otimes_{{\rm sym}} w.
	\end{align}
\end{prop}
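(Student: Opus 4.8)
The plan is to compute both sides of (\ref{eqn:2.1.3}) and (\ref{eqn:2.1.4}) directly from the subset description of $\Delta_0$, tracking how admissibility of a subset changes when a single letter $d$ or $y$ is prepended to $w$. Write $w=x_1\cdots x_n$ with each $x_i\in L$ and $n={\rm wt}(w)\geq1$; since $w\in Y\setminus\{{\bf 1}\}$ we have $x_n=y$, and $dw,\,yw$ again lie in $Y\setminus\{{\bf 1}\}$, so $\Delta_0(dw)$ and $\Delta_0(yw)$ are defined. Because $S=\emptyset$ and $S=[n]$ are always admissible for a word in $Y$, the definitions of $\Delta_0$ and $\tilde{\Delta}_0$ give
\[
  \tilde{\Delta}_0(w)=\sum_{\substack{S\subseteq[n]\ {\rm admissible}\\ \emptyset\neq S\neq[n]}}w_S\otimes w_{\overline{S}}\ \in\ \mathcal{H}_0\otimes\mathcal{H}_0.
\]
Moreover, since $f$ uses plain concatenation (not $\shuffle_0$) on the single letters $d$ and $y$, one has $x_0\bullet(a\otimes b)=x_0a\otimes b+a\otimes x_0b$ for $x_0\in\{d,y\}$, hence by bilinearity
\[
  x_0\bullet\tilde{\Delta}_0(w)=\sum_{\substack{S\subseteq[n]\ {\rm admissible}\\ \emptyset\neq S\neq[n]}}\bigl(x_0w_S\otimes w_{\overline{S}}+w_S\otimes x_0w_{\overline{S}}\bigr).
\]

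First I would establish (\ref{eqn:2.1.3}). The word $dw$ has weight $n+1$; index its letters by $[n+1]$ so that letter $1$ is $d$. A subset $T\subseteq[n+1]$ either contains $1$, and then $T=\{1\}\cup(S+1)$ (writing $S+1:=\{i+1:i\in S\}$) with $(dw)_T=d\,w_S$ and $(dw)_{\overline{T}}=w_{\overline{S}}$; or $1\notin T$, and then $T=S+1$ with $(dw)_T=w_S$ and $(dw)_{\overline{T}}=d\,w_{\overline{S}}$, where $S\subseteq[n]$ in both cases. Since $d\,w_S$ lies in $Y$ iff $w_S$ ends in $y$, i.e. iff $w_S\in Y$ and $S\neq\emptyset$, admissibility of $T$ amounts to ``$S$ admissible and $S\neq\emptyset$'' when $1\in T$, and to ``$S$ admissible and $S\neq[n]$'' when $1\notin T$. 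Splitting the sum defining $\Delta_0(dw)$ along these two cases and comparing with the displayed expression for $d\bullet\tilde{\Delta}_0(w)$, the only surplus terms are $S=[n]$ in the first piece and $S=\emptyset$ in the second, so
\[
  \Delta_0(dw)=d\bullet\tilde{\Delta}_0(w)+dw\otimes{\bf 1}+{\bf 1}\otimes dw;
\]
subtracting the boundary terms ${\bf 1}\otimes dw+dw\otimes{\bf 1}$ yields $\tilde{\Delta}_0(dw)=d\bullet\tilde{\Delta}_0(w)$.

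For (\ref{eqn:2.1.4}) the argument runs the same way, with one decisive difference: prepending $y$ never destroys admissibility, because $y\,w_S$ ends in $y$ as soon as $w_S\in Y$ (including $w_S={\bf 1}$, where $y\,{\bf 1}=y$). Hence in both the $1\in T$ and $1\notin T$ cases the admissibility of $T$ is simply ``$S$ admissible'', giving
\[
  \Delta_0(yw)=\sum_{S\subseteq[n]\ {\rm admissible}}\bigl(y\,w_S\otimes w_{\overline{S}}+w_S\otimes y\,w_{\overline{S}}\bigr),
\]
which exceeds $y\bullet\tilde{\Delta}_0(w)$ exactly by the $S=\emptyset$ contribution $y\otimes w+{\bf 1}\otimes yw$ and the $S=[n]$ contribution $yw\otimes{\bf 1}+w\otimes y$. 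Subtracting ${\bf 1}\otimes yw+yw\otimes{\bf 1}$ then leaves $\tilde{\Delta}_0(yw)=y\bullet\tilde{\Delta}_0(w)+y\otimes w+w\otimes y=y\bullet\tilde{\Delta}_0(w)+y\otimes_{{\rm sym}}w$.

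The step needing the most care is the bookkeeping of admissibility: translating ``$T$ admissible for $x_0w$'' into the right condition on the associated subset $S$ of $w$, and in particular registering the asymmetry between $x_0=d$ — which forces $S\neq\emptyset$ (resp.\ $S\neq[n]$) — and $x_0=y$ — which imposes nothing extra. That asymmetry is precisely what produces the additional term $y\otimes_{{\rm sym}}w$ in the second identity; once it is pinned down, the rest is routine tracking of the extreme subsets $S=\emptyset$ and $S=[n]$. One could alternatively organize this as an induction on ${\rm wt}(w)$ built on the elementary identities (\ref{eqn:2.1.1}) and (\ref{eqn:2.1.2}), but the direct count above seems shortest.
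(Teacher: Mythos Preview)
Your proof is correct and follows essentially the same route as the paper: split the admissible subsets $T\subseteq[n+1]$ for $x_0w$ according to whether $1\in T$, translate admissibility of $T$ into a condition on the corresponding $S\subseteq[n]$, and compare with $x_0\bullet\tilde{\Delta}_0(w)$. The only organizational difference is that the paper extends both partial sums to run over \emph{all} admissible $S$ (adding in the $S=\emptyset$ and $S=[n]$ terms) and then invokes $d\otimes_{\rm sym}w=0$ in $\mathcal{H}_0\otimes\mathcal{H}_0$ to dispose of the correction, whereas you compare directly against the restricted sum $\emptyset\neq S\neq[n]$ and read off the surplus terms $dw\otimes{\bf 1}$ and ${\bf 1}\otimes dw$; your bookkeeping sidesteps the need to cite that $d=0$ in $\mathcal{H}_0$, but the two computations are otherwise identical.
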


\begin{proof}
	Let $w$ be in $Y \setminus \{{\bf 1}\}$. By the definition of $\Delta_0$ and the equation (\ref{eqn:2.1.0}), we have
	\begin{align*}
		\tilde{\Delta}_0(d w)&=\Delta_0(d w)-1\otimes_{\rm sym}d w\\
		&=\sum_{\substack{S \subset [n+1]\\ S:{\rm admissible}}}(d w)_S \otimes (d w)_{\overline{S}}-1\otimes_{\rm sym}d w\\
		&=\sum_{\substack{1\in S \subset [n+1]\\ S:{\rm admissible}}}(d w)_S \otimes (d w)_{\overline{S}}+\sum_{\substack{1\notin S \subset [n+1]\\ S:{\rm admissible}}}(d w)_S \otimes (d w)_{\overline{S}}-1\otimes_{\rm sym}d w\\
		&=\sum_{\substack{S \subset [n]\\ S:{\rm admissible}}}d\cdot w_S \otimes w_{\overline{S}}+\sum_{\substack{S \subset [n]\\ S:{\rm admissible}}}w_S \otimes d\cdot w_{\overline{S}}-(d\otimes_{\rm sym}w+1\otimes_{\rm sym}d w)\\
		&=\sum_{\substack{S \subset [n]\\ S:{\rm admissible}}}(d\cdot w_S \otimes w_{\overline{S}}+w_S \otimes d\cdot w_{\overline{S}})-(d\otimes_{\rm sym}w+1\otimes_{\rm sym}d w)\\
		&=d\bullet\left(\sum_{\substack{S \subset [n]\\ S:{\rm admissible}}}w_S \otimes w_{\overline{S}}-1\otimes_{\rm sym}w\right)\\
		&=d\bullet\tilde{\Delta}_0(w).\\
	\end{align*}
	We use $d\otimes_{\rm sym}w=0$ in $\mathcal{H}_0\otimes\mathcal{H}_0$ at the fourth equality.
	The equation (\ref{eqn:2.1.4}) can be proved in the same way.
\end{proof}

\begin{prop}\label{cor:2.1.1}
	Let $w_m := d^my$ for $m \in \mathbb{N}_0$. Then for $n \in \mathbb{N}_{\geq 2}$ and $\ k_1, \dots, k_n \in \mathbb{N}_0$, we have
	\begin{align}
		&\tilde{\Delta}_0(w_{k_1}\cdots w_{k_n}) = \sum_{i_1 + j_1=k_1} \binom{k_1}{i_1}d^{i_1}y \otimes_{{\rm sym}} d^{j_1}w_{k_2}\cdots w_{k_n} \\
		&+ \sum_{p=2}^{n-1}\sum_{\substack{i_1 + j_1=k_1\\ \scalebox{0.5}{\rotatebox{90}{$\cdots$}}\\i_p + j_p=k_p}}\prod_{a=1}^p\binom{k_a}{i_a}\sum_{\substack{\{u_q,\ v_q\}=\{d^{i_q},\ d^{j_q}y\}\\1\leq q\leq p-1}}\hspace{-1cm}(u_1 \cdots u_{p-1}d^{i_p}y \otimes_{{\rm sym}} v_1 \cdots v_{p-1}d^{j_p}w_{k_{p+1}}\cdots w_{k_{n}}). \nonumber
	\end{align}
	Here $\{u_q,v_q\}=\{d^{i_q},d^{j_q}y\}$ means $(u_q,v_q)=(d^{i_q},d^{j_q}y)\ \mbox{or}\ (d^{j_q}y,d^{i_q})$.
\end{prop}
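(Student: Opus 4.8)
The plan is to prove the formula by induction on $n \geq 2$, using Proposition \ref{prop:2.1.1} (the recursions for $\tilde\Delta_0(dw)$ and $\tilde\Delta_0(yw)$) together with the explicit "Leibniz-type" expansions (\ref{eqn:2.1.1}) and (\ref{eqn:2.1.2}) for the action of $d^n$ and $d^n y$ on a symmetrized tensor. Since $w_{k_1}\cdots w_{k_n} = d^{k_1}y\,w_{k_2}\cdots w_{k_n}$, I can peel off the leading block $d^{k_1}y$ one letter at a time: iterating (\ref{eqn:2.1.3}) gives $\tilde\Delta_0(d^{k_1}y\,w') = d^{k_1}\bullet\tilde\Delta_0(y\,w')$ for $w' := w_{k_2}\cdots w_{k_n}$, and then (\ref{eqn:2.1.4}) gives $\tilde\Delta_0(y\,w') = y\bullet\tilde\Delta_0(w') + y\otimes_{\rm sym} w'$. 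So the identity to establish reduces to computing $d^{k_1}\bullet\bigl(y\bullet\tilde\Delta_0(w') + y\otimes_{\rm sym} w'\bigr)$ and matching it term-by-term with the claimed right-hand side.

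First I would handle the base case $n=2$: here $w' = w_{k_2} = d^{k_2}y$, so $\tilde\Delta_0(w') = 0$ in $\mathcal{H}_0\otimes\mathcal{H}_0$ (it is primitive, being a single generator), hence $\tilde\Delta_0(w_{k_1}w_{k_2}) = d^{k_1}\bullet(y\otimes_{\rm sym} d^{k_2}y)$. Applying (\ref{eqn:2.1.2}) with $n = k_1$ — more precisely first applying (\ref{eqn:2.1.1}) is not needed; one directly uses that $d^{k_1}y$ acting is captured by (\ref{eqn:2.1.2}) after writing $d^{k_1}\bullet(y\bullet(\cdot))$ — yields exactly $\sum_{i_1+j_1=k_1}\binom{k_1}{i_1}d^{i_1}y\otimes_{\rm sym}d^{j_1}d^{k_2}y = \sum_{i_1+j_1=k_1}\binom{k_1}{i_1}d^{i_1}y\otimes_{\rm sym}d^{j_1}w_{k_2}$, which is the first line of the claimed formula (the $p$-sum being empty when $n=2$). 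For the inductive step, I substitute the induction hypothesis for $\tilde\Delta_0(w')$ (with $n-1$ blocks $w_{k_2},\dots,w_{k_n}$) into $d^{k_1}\bullet\bigl(y\bullet\tilde\Delta_0(w') + y\otimes_{\rm sym} w'\bigr)$. The term $d^{k_1}\bullet(y\otimes_{\rm sym}w')$ again produces, via (\ref{eqn:2.1.2}), the first line of the target. The term $d^{k_1}\bullet\bigl(y\bullet(\text{IH})\bigr)$ must reproduce the $p=2$ through $p=n-1$ terms: the $y\bullet(\cdot)$ prepends a $y$ and the $d^{k_1}\bullet(\cdot)$ distributes $k_1$ copies of $d$, and one checks that the combinatorics of splitting $k_1 = i_1 + j_1$, choosing which of the two tensor slots receives the $y$ (this is the $\{u_1,v_1\} = \{d^{i_1}, d^{j_1}y\}$ choice), and shifting the index $p \mapsto p$ of the old formula to account for the new leading block, matches exactly. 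The binomial coefficient $\binom{k_1}{i_1}$ and the product structure $\prod_{a=1}^p\binom{k_a}{i_a}$ emerge from the repeated use of (\ref{eqn:2.1.1})/(\ref{eqn:2.1.2}), which are themselves Leibniz rules.

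The main obstacle I anticipate is bookkeeping rather than conceptual: making the index-matching between the induction hypothesis (a sum over $p = 2,\dots,n-1$ with data $(i_a,j_a,u_q,v_q)$ for blocks $2,\dots,n$) and the target formula (the "same" sum but now over blocks $1,\dots,n$) completely rigorous, in particular verifying that the $p = n-1$ term of the hypothesis — whose symmetrized tensor already involves no trailing $w$-block — correctly becomes a term in which $d^{j_p}w_{k_{p+1}}\cdots w_{k_n}$ still has a nonempty trailing product after the shift. I would also need to be careful that all the manipulations take place in $\mathcal{H}_0\otimes\mathcal{H}_0$ (not in $\mathbb{Q}\langle L\rangle^{\otimes 2}$), so that relations like $d\otimes_{\rm sym}w = 0$ and $\tilde\Delta_0(w_m) = 0$ are legitimate; the well-definedness of $\bullet = \overline f$ from the preceding Lemma is exactly what licenses computing with representatives. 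Finally, a small separate check is that (\ref{eqn:2.1.1}) and (\ref{eqn:2.1.2}) — asserted in the excerpt to "be proved inductively on $n$" — are available; I would either cite them as given or include their one-line inductive verification, since they are the computational engine of the whole argument.
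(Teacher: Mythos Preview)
Your proposal is correct and uses exactly the same ingredients as the paper (Proposition~\ref{prop:2.1.1} and the Leibniz rules (\ref{eqn:2.1.1})--(\ref{eqn:2.1.2})), but the paper organizes the computation slightly differently in a way that sidesteps the bookkeeping you flag as the main obstacle. Rather than a formal induction on $n$ with the already-expanded hypothesis substituted in, the paper first \emph{fully unrolls} the recursion: from $\tilde\Delta_0(d^a y\,w) = d^a\bullet(y\otimes_{\rm sym}w) + (d^a y)\bullet\tilde\Delta_0(w)$ applied iteratively it obtains
\[
\tilde\Delta_0(w_{k_1}\cdots w_{k_n}) \;=\; \sum_{p=1}^{n-1}\bigl(d^{k_1}y\cdots d^{k_{p-1}}y\,d^{k_p}\bigr)\bullet\bigl(y\otimes_{\rm sym} w_{k_{p+1}}\cdots w_{k_n}\bigr),
\]
the final remainder $(d^{k_1}y\cdots d^{k_{n-1}}y)\bullet\tilde\Delta_0(w_{k_n})$ vanishing since $w_{k_n}$ is primitive. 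Only then does it expand each $p$-term independently: one application of (\ref{eqn:2.1.1}) to the innermost $d^{k_p}$, then repeated applications of (\ref{eqn:2.1.2}) to the remaining blocks $d^{k_{p-1}}y,\dots,d^{k_1}y$. Separating ``unroll'' from ``expand'' means the sum over $p$ appears directly and each summand is handled on its own, so no index-shift between an inductive hypothesis and the target formula is ever needed. Your inductive version works too; it just front-loads the reindexing that the paper's arrangement avoids.
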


\begin{proof}
	Because we have
	\begin{equation}\label{eqn:2.1.5}
		\tilde{\Delta}_0(d^a y w) = d^a\bullet\left(y\otimes_{\rm sym}w+y\bullet\tilde{\Delta}_0(w)\right) \quad (a \in \mathbb{N}_0)
		\end{equation}
	by Proposition \ref{prop:2.1.1}, we compute
	\begin{align}
		&\tilde{\Delta}_0(w_{k_1}w_{k_2}\cdots w_{k_n}) \nonumber \\
		&= d^{k_1}\bullet(y \otimes_{{\rm sym}} w_{k_2}\cdots w_{k_n})+ (d^{k_1}y)\bullet\tilde{\Delta}_0(w_{k_2}\cdots w_{k_n}) \nonumber \\
		&= d^{k_1}\bullet(y \otimes_{{\rm sym}} w_{k_2}\cdots w_{k_n})+ (d^{k_1}yd^{k_2})\bullet(y \otimes_{{\rm sym}} w_{k_3}\cdots w_{k_n}) \nonumber \\
		&\quad +(d^{k_1}yd^{k_2}y)\bullet\tilde{\Delta}_0(w_{k_3}\cdots w_{k_n}). \nonumber \\
		\intertext{By using the equation (\ref{eqn:2.1.5}) repeatedly, we get}
		&= \sum_{p=1}^{n-1}(d^{k_1}y\cdots yd^{k_p})\bullet(y \otimes_{{\rm sym}} w_{k_{p+1}}\cdots w_{k_n}) \nonumber \\
		&\quad +(d^{k_1}y\cdots d^{k_{n-1}}y)\bullet\tilde{\Delta}_0(w_{k_n}). \nonumber
	\end{align}
	Because $\tilde{\Delta}_0(d^a y) = 0\ (a \in \mathbb{N}_0)$ by the definition of $\tilde{\Delta}_0$, the second term vanishes. Therefore by (\ref{eqn:2.1.1}), we get
	\begin{align*}
		&\tilde{\Delta}_0(w_{k_1}w_{k_2}\cdots w_{k_n}) \\
		=&\sum_{p=1}^{n-1}\left(d^{k_1}y\cdots d^{k_{p-1}}y\right)\bullet\left(\sum_{i_p+j_p=k_p}\binom{k_p}{i_p}d^{i_p}y \otimes_{{\rm sym}} d^{j_p}w_{k_{p+1}}\cdots w_{k_n}\right). \\
		\intertext{And by using (\ref{eqn:2.1.2}) repeatedly, we have}
		=& \sum_{i_1 + j_1=k_1} \binom{k_1}{i_1}d^{i_1}y \otimes_{{\rm sym}} d^{j_1}w_{k_2}\cdots w_{k_n} \\
		&+ \sum_{p=2}^{n-1}\sum_{\substack{i_1 + j_1=k_1\\ \scalebox{0.5}{\rotatebox{90}{$\cdots$}}\\i_p + j_p=k_p}}\prod_{a=1}^p\binom{k_a}{i_a}\sum_{\substack{\{u_q,\ v_q\}=\{d^{i_q},\ d^{j_q}y\}\\1\leq q\leq p-1}}\hspace{-1cm}(u_1 \cdots u_{p-1}d^{i_p}y \otimes_{{\rm sym}} v_1 \cdots v_{p-1}d^{j_p}w_{k_{p+1}}\cdots w_{k_{n}}). 
	\end{align*}
\end{proof}
\subsection{The algebraic Birkhoff decomposition and renormalized values}
We explain the algebraic Birkhoff decomposition. This decomposition is a fundamental tool in a work of Connes and Kreimer \cite{CK} on their Hopf algebraic approach to renormalization of perturbative quantum field theory.
 This decomposition is necessary to define renormalized values. \\
\indent
Based on \cite{Man}, we recall the algebraic Birkhoff decomposition. We denote the product and the unit of $\mathbb{Q}$-algebra $\mathcal{A}$ by $m_{\mathcal{A}}$ and $u_{\mathcal{A}}$.  For a Hopf algebra $\mathcal{H}$ over $\mathbb{Q}$, we mean $\Delta_{\mathcal{H}}$, $\varepsilon_{\mathcal{H}}$ and $S_{\mathcal{H}}$ to be its coproduct, its counit and its antipode respectively. In this paper, we often use Sweedler's notation:
\begin{equation}\label{eqn:2.2.1}
	\tilde{\Delta}_0(w) := \sum_{(w)}w'\otimes w''.
\end{equation}

Let $\mathcal{H}$ be a Hopf algebra over $\mathbb{Q}$, $\mathcal{A}$ be a $\mathbb{Q}$-algebra and $\mathcal{L}(\mathcal{H},\mathcal{A})$ be the set of $\mathbb{Q}$-linear maps from $\mathcal{H}$ to $\mathcal{A}$. We define the {\it convolution} $\phi*\psi \in \mathcal{L}(\mathcal{H},\mathcal{A})$ by
	\begin{equation*}
		\phi*\psi:=m_{\mathcal{A}}\circ(\phi\otimes\psi)\circ\Delta_{\mathcal{H}}
	\end{equation*}
	for $\mathbb{Q}$-linear maps $\phi\ \mbox{and}\ \psi \in \mathcal{L}(\mathcal{H},\mathcal{A})$.
Let $\mathcal{H}$ be a Hopf algebra over $\mathbb{Q}$ and $\mathcal{A}$ be a $\mathbb{Q}$-algebra. The subset
\begin{equation*}
	G(\mathcal{H},\mathcal{A}) := \{\phi \in \mathcal{L}(\mathcal{H},\mathcal{A})\ |\ \phi({\bf 1})={\bf 1}_{\mathcal{A}}\}
\end{equation*}
endowed with the above convolution product $*$ forms a group. The unit is given by a map $e =u_{\mathcal{A}}\circ\varepsilon_{\mathcal{H}}$.\\
\indent
Let $\mathcal{H}$ be a connected filtered Hopf algebra over $\mathbb{Q}$, that is, $\mathcal{H}$ has a filtration of $\mathbb{Q}$-linear subspace:
	$$\mathcal{H}^0\subset \mathcal{H}^1\subset \cdots \subset\mathcal{H}^n \subset \bigcup_{n\in\mathbb{N}_0}\mathcal{H}^n=\mathcal{H}$$
with $\mathcal{H}^0=\mathbb{Q}$ and with the conditions:
	$\mathcal{H}^m\mathcal{H}^n\subset\mathcal{H}^{m+n}$ and 
	$S_{\mathcal{H}}(\mathcal{H}^n)\subset\mathcal{H}^n$ and 
	$\Delta_{\mathcal{H}}(\mathcal{H}^n)\subset\displaystyle\sum_{p+q=n}\mathcal{H}^p\otimes\mathcal{H}^q$
for $m,n\in\mathbb{N}_0$.

 Let $\mathcal{A}:=\mathbb{Q}[\frac{1}{z},z]]:=\mathbb{Q}[[z]][\frac{1}{z}]$ be the algebra consisting of all Laurent series. And we decompose it as $\mathcal{A}=\mathcal{A}_-\oplus\mathcal{A}_+$ where $\mathcal{A}_-:={\frac{1}{z}\mathbb{Q}[\frac{1}{z}]}$ and $\mathcal{A}_+:=\mathbb{Q}[[z]]$. We define a projection $\pi:\mathcal{A}\rightarrow\mathcal{A}_-$ by 
\begin{equation*}
	\pi\left(\sum_{n=-k}^{\infty}a_n z^n\right):=\sum_{n=-k}^{-1}a_n z^n,
\end{equation*}
with $a_n \in\mathbb{Q}$ and $k\in\mathbb{Z}$. Here we use the convention the sum over empty set is zero.

The following theorem is the fundamental tool of Connes and Kreimer (\cite{CK}) in the renormalization procedure of perturbative quantum field theory.
\begin{thm}[\cite{CK}, \cite{EMS1}, \cite{Man}: {\bf algebraic Birkhoff decomposition}]\label{thm:2.2.1}
	 For $\phi \in G(\mathcal{H},\mathcal{A})$, there are unique linear maps $\phi_+:\mathcal{H}\rightarrow \mathcal{A}_+$ and $\phi_-:\mathcal{H}\rightarrow \mathbb{Q}\oplus\mathcal{A}_-$ with $\phi_-({\bf 1})=1\in\mathbb{Q}$ such that
	\begin{equation*}
		\phi=\phi_-^{-1}*\phi_+.
	\end{equation*}
	Moreover the maps $\phi_-$ and $\phi_+$ are algebra homomorphisms if $\phi$ is an algebra homomorphism.
\end{thm}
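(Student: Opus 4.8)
The plan is to construct $\phi_-$ and $\phi_+$ by a recursion along the filtration of $\mathcal{H}$, which produces existence and uniqueness simultaneously, and then to deduce the multiplicativity assertion from the Rota--Baxter property of the projection $\pi$. First I would reformulate: in the convolution group $G(\mathcal{H},\mathcal{A})$ the factorization $\phi=\phi_-^{-1}*\phi_+$ is equivalent to $\phi_+=\phi_-*\phi$. Write $\Delta_{\mathcal{H}}(x)=x\otimes{\bf 1}+{\bf 1}\otimes x+\tilde{\Delta}(x)$ with $\tilde{\Delta}(x)=\sum_{(x)}x'\otimes x''$ the reduced coproduct, where $x',x''\in\ker\varepsilon_{\mathcal{H}}$ when $x\in\ker\varepsilon_{\mathcal{H}}$. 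Using $\phi_-({\bf 1})={\bf 1}_{\mathcal{A}}$ and $\phi({\bf 1})={\bf 1}_{\mathcal{A}}$, the equation $\phi_+=\phi_-*\phi$ evaluated at $x\in\ker\varepsilon_{\mathcal{H}}$ becomes
\[
\phi_+(x)=\phi_-(x)+\bar{\phi}(x),\qquad \bar{\phi}(x):=\phi(x)+\sum_{(x)}\phi_-(x')\phi(x'').
\]
Since $\mathcal{A}=\mathcal{A}_-\oplus\mathcal{A}_+$ and $\phi_-$ is to take values in $\mathcal{A}_-$ on $\ker\varepsilon_{\mathcal{H}}$ while $\phi_+$ takes values in $\mathcal{A}_+$, this forces
\[
\phi_-(x):=-\pi(\bar{\phi}(x)),\qquad \phi_+(x):=({\rm id}-\pi)(\bar{\phi}(x)),
\]
together with $\phi_-({\bf 1}):=1$ and $\phi_+({\bf 1}):={\bf 1}_{\mathcal{A}}$.

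Next I would verify that these formulas consistently define the maps by induction on the least $n$ with $x\in\mathcal{H}^n$. The base case is the assignment on ${\bf 1}$; for $x\in\ker\varepsilon_{\mathcal{H}}\cap\mathcal{H}^n$, the hypothesis $\Delta_{\mathcal{H}}(\mathcal{H}^n)\subset\sum_{p+q=n}\mathcal{H}^p\otimes\mathcal{H}^q$ together with connectedness shows that every $x',x''$ occurring in $\tilde{\Delta}(x)$ lies in $\ker\varepsilon_{\mathcal{H}}\cap\mathcal{H}^{n-1}$, so $\bar{\phi}(x)$, and hence $\phi_-(x)$ and $\phi_+(x)$, depends only on $\phi$ and on values of $\phi_-$ already defined in lower degree. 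The resulting $\phi_-\in\mathcal{L}(\mathcal{H},\mathbb{Q}\oplus\mathcal{A}_-)$ and $\phi_+\in\mathcal{L}(\mathcal{H},\mathcal{A}_+)$ satisfy $\phi_-({\bf 1})=1$ and, by construction, $\phi_-*\phi=\phi_+$ on $\ker\varepsilon_{\mathcal{H}}$ and on ${\bf 1}$, hence everywhere; since $G(\mathcal{H},\mathcal{A})$ is a group, $\phi_-$ is convolution invertible and $\phi=\phi_-^{-1}*\phi_+$. For uniqueness, any admissible pair $(\psi_-,\psi_+)$ must obey the same two displayed recursion formulas (again by $\mathcal{A}=\mathcal{A}_-\oplus\mathcal{A}_+$), so an induction on $n$ gives $\psi_\pm=\phi_\pm$.

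For the ``moreover'' part, assume $\phi$ is an algebra homomorphism. Since $\mathcal{A}=\mathbb{Q}[\frac{1}{z},z]]$ is commutative and both $\mathcal{A}_-$ and $\mathcal{A}_+$ are subalgebras, $\pi$ is a Rota--Baxter operator of weight $-1$, namely $\pi(a)\pi(b)=\pi(\pi(a)b)+\pi(a\pi(b))-\pi(ab)$ for $a,b\in\mathcal{A}$. I would then prove $\phi_-(xy)=\phi_-(x)\phi_-(y)$ for $x,y\in\ker\varepsilon_{\mathcal{H}}$ by induction on the filtration degree of $xy$: the inductive step rests on the standard identity expressing $\bar{\phi}(xy)$ through $\bar{\phi}$, $\phi$ and $\phi_-$ evaluated on $x$, $y$ and on the strictly lower-degree components of their reduced coproducts --- an identity following from multiplicativity of $\phi$, from $\Delta_{\mathcal{H}}$ being an algebra map, and from the inductive hypothesis --- and the Rota--Baxter relation then converts it into $-\pi(\bar{\phi}(xy))=\pi(\bar{\phi}(x))\pi(\bar{\phi}(y))$, i.e. $\phi_-(xy)=\phi_-(x)\phi_-(y)$. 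Multiplicativity of $\phi_+=\phi_-*\phi$ follows at once from that of $\phi_-$ and $\phi$ by expanding $\Delta_{\mathcal{H}}(xy)=\Delta_{\mathcal{H}}(x)\Delta_{\mathcal{H}}(y)$ and using commutativity of $\mathcal{A}$.

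The routine part is the filtration bookkeeping in the recursion; the step I expect to be the main obstacle is the multiplicativity of $\phi_-$, that is, pinning down the exact form of the identity for $\bar{\phi}(xy)$ and checking that every error term drops in degree so that the induction closes. For this theorem one may alternatively simply appeal to \cite{CK}, \cite{EMS1} and \cite{Man}.
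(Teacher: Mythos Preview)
Your proposal is correct and follows the standard Bogoliubov recursion argument. Note, however, that the paper does not give its own proof of this theorem: it is stated with attribution to \cite{CK}, \cite{EMS1}, \cite{Man} and used as a black box. Your sketch is essentially the proof one finds in those references (in particular in \cite{Man}), so there is no discrepancy in approach---you have simply supplied what the paper chose to cite rather than reprove.
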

We define the $\mathbb{Q}$-linear map $\phi:\mathcal{H}_0 \rightarrow \mathcal{A}$ by $\phi({\bf 1}):=1$ and for $k_1,\dots,k_n \in \mathbb{N}_0$,
\begin{equation}
	d^{k_1}y\cdots d^{k_n}y \mapsto \phi(d^{k_1}y\cdots d^{k_n}y)(z) := \partial^{k_1}_z\left(x\partial^{k_2}_z\right)\cdots\left(x\partial^{k_n}_z\right)\left(x(z)\right)
\end{equation}
where $x:=x(z) := \frac{e^z}{1-e^z} \in \mathcal{A}$ and $\partial_z$ is the derivative by $z$.
\begin{prop}[\cite{EMS1} \S4.2]
The $\mathbb{Q}$-linear map $\phi:\mathcal{H}_0 \rightarrow \mathcal{A}$ is well-defined and forms algebra homomorphism. Moreover, the following diagram is commutative:
$$\xymatrix{(\mathcal{H}_0,\shuffle_0) \ar[r]^{\zeta^{\shuffle}_t} \ar@{->}[rd]_{\phi} & (\mathbb{Q}[[t]],\cdot) \ar@{->}[d]^{\huge t\mapsto e^z}\\ & (\mathcal{A},\cdot) }$$
where $\zeta^{\shuffle}_t$ is the map in {\rm (\ref{eqn:2.1.6})}.
\end{prop}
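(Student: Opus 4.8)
\emph{Overview of the plan.} Everything hinges on one identity: for every admissible word $d^{k_1}y\cdots d^{k_n}y$ (with $k_1,\dots,k_n\in\mathbb{N}_0$) the nested operator expression defining $\phi$ equals the multiple polylogarithm of the reversed tuple evaluated at $t=e^z$, namely $\phi(d^{k_1}y\cdots d^{k_n}y)(z)={\rm Li}_{-k_n,\dots,-k_1}(e^z)$. Granting this, the commutativity of the triangle is immediate, and the well-definedness and the algebra-homomorphism property of $\phi$ are transported from the corresponding properties of $\zeta^{\shuffle}_t$ (Lemma \ref{lem:2.1.1}); I would also record an alternative, self-contained route not using $\zeta^{\shuffle}_t$.

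\emph{Proof of the key identity.} First record two elementary recursions for multiple polylogarithms, obtained by differentiating, respectively by summing, the outermost series: $t\frac{d}{dt}{\rm Li}_{s_1,\dots,s_n}(t)={\rm Li}_{s_1,\dots,s_{n-1},s_n-1}(t)$, and ${\rm Li}_{s_1,\dots,s_{n-1},0}(t)=\frac{t}{1-t}\,{\rm Li}_{s_1,\dots,s_{n-1}}(t)$ (with the convention ${\rm Li}_{\emptyset}:=1$). Iterating these from the innermost index outward gives
\[
{\rm Li}_{-k_n,\dots,-k_1}(t)=\bigl(t\tfrac{d}{dt}\bigr)^{k_1}\!\left(\frac{t}{1-t}\bigl(t\tfrac{d}{dt}\bigr)^{k_2}\!\left(\frac{t}{1-t}\cdots\bigl(t\tfrac{d}{dt}\bigr)^{k_n}\frac{t}{1-t}\right)\right),
\]
so in particular ${\rm Li}_{-k_n,\dots,-k_1}(t)\in\mathbb{Q}[t,(1-t)^{-1}]$. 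Since $1-e^z$ is a unit of $\mathcal{A}$, the substitution $t\mapsto e^z$ defines a ring homomorphism $\mathbb{Q}[t,(1-t)^{-1}]\to\mathcal{A}$; under it $\frac{t}{1-t}\mapsto x(z)$ and, by the chain rule, $t\frac{d}{dt}\mapsto\partial_z$. Applying it to the display turns the right-hand side into $\partial_z^{k_1}\bigl(x\,\partial_z^{k_2}(\cdots x\,\partial_z^{k_n}(x)\cdots)\bigr)=\phi(d^{k_1}y\cdots d^{k_n}y)(z)$, which is the asserted identity.

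\emph{Consequences.} The admissible words span $\mathcal{H}_0$, and a $\mathbb{Q}$-linear relation among them holds in $\mathcal{H}_0$ exactly when the corresponding combination lies in $\mathcal{S}_-$. By Lemma \ref{lem:2.1.1}, $\zeta^{\shuffle}_t$ (which descends to a well-defined algebra homomorphism on $\mathcal{H}_0$, since $\mathcal{S}_-$ maps into $\mathcal{T}+\mathcal{L}$) annihilates every such combination; composing with the $\mathbb{Q}$-linear map $t\mapsto e^z$ on $\mathbb{Q}[t,(1-t)^{-1}]$ and using the key identity, $\phi$ annihilates it too, so $\phi$ is well-defined on $\mathcal{H}_0$ and equals $(t\mapsto e^z)\circ\zeta^{\shuffle}_t$ there — which is precisely the commutative triangle. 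As $\zeta^{\shuffle}_t$ is an algebra homomorphism and $t\mapsto e^z$ is a ring homomorphism, $\phi(u\shuffle_0 v)=\phi(u)\phi(v)$ follows formally. For a route bypassing $\zeta^{\shuffle}_t$: extend $\phi$ to $\widetilde\phi:\mathbb{Q}\langle L\rangle\to\mathcal{A}$ by $\widetilde\phi({\bf 1}):=1$, $\widetilde\phi(dw):=\partial_z\widetilde\phi(w)$, $\widetilde\phi(yw):=x\,\widetilde\phi(w)$, which reproduces the given formula on admissible words and kills words ending in $d$, so $\widetilde\phi(\mathcal{T}_-)=0$; a lexicographic induction on $({\rm wt}(a),{\rm wt}(b))$ using the defining recursion of $\shuffle_0$ shows $\widetilde\phi(a\shuffle_0 b)=\widetilde\phi(a)\widetilde\phi(b)$, and then, since $\widetilde\phi$ is a homomorphism into the commutative algebra $\mathcal{A}$, it suffices to check the generators of $\mathcal{L}_-$, where $\widetilde\phi\bigl(d^k\{d(u\shuffle_0 v)-du\shuffle_0 v-u\shuffle_0 dv\}\bigr)=\partial_z^k\bigl(\partial_z(\widetilde\phi(u)\widetilde\phi(v))-\partial_z\widetilde\phi(u)\cdot\widetilde\phi(v)-\widetilde\phi(u)\cdot\partial_z\widetilde\phi(v)\bigr)=0$ by the Leibniz rule; thus $\widetilde\phi$ kills $\mathcal{S}_-$ and descends to $\phi$ on $\mathcal{H}_0$.

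\emph{Main obstacle.} The genuine content is the polylogarithm computation of the second paragraph; after it, everything is formal bookkeeping. In the self-contained variant the one delicate point is organising the induction for $\widetilde\phi(a\shuffle_0 b)=\widetilde\phi(a)\widetilde\phi(b)$: when both $a$ and $b$ begin with $d$, the rule $du\shuffle_0 dv=d(u\shuffle_0 dv)-u\shuffle_0 d^2v$ produces a term whose total weight has not decreased, which is why the induction must run on the pair of argument-weights rather than on their sum. One must also keep in mind that $t\mapsto e^z$ makes sense only on $\mathbb{Q}[t,(1-t)^{-1}]$, but this subalgebra contains the image of $\zeta^{\shuffle}_t|_{\mathcal{H}_0}$, so the diagram is meaningful.
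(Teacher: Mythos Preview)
The paper does not prove this proposition; it is simply quoted from \cite{EMS1} \S4.2 and used as input. So there is no ``paper's own proof'' to compare against, and your proposal should be read as supplying an argument the paper omits.

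Your argument is correct. The polylogarithm recursions $t\frac{d}{dt}{\rm Li}_{s_1,\dots,s_n}(t)={\rm Li}_{s_1,\dots,s_n-1}(t)$ and ${\rm Li}_{s_1,\dots,s_{n-1},0}(t)=\frac{t}{1-t}{\rm Li}_{s_1,\dots,s_{n-1}}(t)$ give exactly the nested operator formula, and under $t\mapsto e^z$ the operators $t\frac{d}{dt}$ and multiplication by $\frac{t}{1-t}$ become $\partial_z$ and multiplication by $x(z)$, matching the definition of $\phi$. You also correctly flag that the vertical arrow $t\mapsto e^z$ in the diagram is not literally defined on all of $\mathbb{Q}[[t]]$ (since $e^z$ has constant term $1$), but only on the subring $\mathbb{Q}[t,(1-t)^{-1}]$, which contains the image of $\zeta^{\shuffle}_t$ restricted to $\mathcal{H}_0$; this is a genuine subtlety the paper glosses over.

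One small point to tighten: in the route that transports well-definedness from $\zeta^{\shuffle}_t$, you need $\mathcal{S}_-\subset\mathcal{T}+\mathcal{L}$ under $\mathbb{Q}\langle L\rangle\hookrightarrow\mathcal{A}_0$. For $\mathcal{T}_-$ this is immediate, but the generators of $\mathcal{L}$ are indexed by $u,v\in W_0y$ whereas those of $\mathcal{L}_-$ allow arbitrary $u,v\in L^*$, so the inclusion $\mathcal{L}_-\subset\mathcal{T}+\mathcal{L}$ is not quite automatic from the definitions. Your self-contained route via $\widetilde\phi$ sidesteps this entirely (and the Leibniz-rule check of $\widetilde\phi(\mathcal{L}_-)=0$ works for all $u,v\in L^*$), so the overall argument is complete; just be aware that of your two routes, the second is the cleaner one and does not depend on that containment.
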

Because the map $\phi$ is algebraic by the above proposition, we obtain the algebraic map:
\begin{equation}\label{eqn:2.3.1}
\phi_+:\mathcal{H}_0\rightarrow \mathcal{A}_+
\end{equation}
which is an algebra homomorphism by Theorem \ref{thm:2.2.1}.
\begin{definition}[\cite{EMS1} \S 4.2]\label{def:2.3.1}
	The {\it renormalized value} \footnote{If we follow the notations of \cite{EMS1}, it should be denoted by $\zeta_+(-k_n,\dots,-k_1)$.} $\zeta_{\scalebox{0.5}{\rm EMS}}(-k_1,\dots, -k_n)$ is defined by
	\begin{equation}
		\zeta_{\scalebox{0.5}{\rm EMS}}(-k_1,\dots, -k_n) := \lim_{z \rightarrow 0}\phi_+(d^{k_n}y\cdots d^{k_1}y)(z)
	\end{equation}
	for $k_1,\dots,k_n \in \mathbb{N}_0$.
\end{definition}
It is remarkable that the renormalized values coincide with special values of the meromorphic continuation of MZFs at non-positive arguments which do not locate at their singularities.
\begin{prop}[\cite{EMS1} Theorem 4.3]
	For $k_1 \in \mathbb{N}_0$, we have
	\begin{equation*}
		\zeta_{\scalebox{0.5}{\rm EMS}}(-k_1) = \zeta(-k_1)
	\end{equation*}
	and for $k_1,k_2 \in \mathbb{N}_0$ with $k_1+k_2$ odd, we have
	\begin{equation*}
		\zeta_{\scalebox{0.5}{\rm EMS}}(-k_1,-k_2) = \zeta(-k_1,-k_2).
	\end{equation*}
\end{prop}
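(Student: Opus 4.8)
The plan is to reduce the statement to explicit Laurent-series computations in $\mathcal{A}=\mathbb{Q}[\tfrac1z,z]]$ via the Bogoliubov recursion attached to the algebraic Birkhoff decomposition (Theorem \ref{thm:2.2.1}). Writing $\bar\phi(w):=\phi(w)+\sum_{(w)}\phi_-(w')\phi(w'')$ in the Sweedler notation (\ref{eqn:2.2.1}) for $\tilde\Delta_0$, one has $\phi_-=-\pi\circ\bar\phi$ and $\phi_+=(\mathrm{id}-\pi)\circ\bar\phi$; since $\pi$ takes values in $\mathcal{A}_-=\tfrac1z\mathbb{Q}[\tfrac1z]$, which has no constant term, $\zeta_{\scalebox{0.5}{\rm EMS}}(-k_1,\dots,-k_n)=\lim_{z\to0}\phi_+(d^{k_n}y\cdots d^{k_1}y)(z)$ is just the constant term of $\bar\phi(d^{k_n}y\cdots d^{k_1}y)(z)$. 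All computations will use the Laurent expansion $x(z)=\frac{e^z}{1-e^z}=-\frac1z-\frac12-\sum_{m\ge2}\frac{B_m}{m!}z^{m-1}$, which follows from the Bernoulli generating function (\ref{eqn:1.1.4}).

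First I would handle the depth-one case. The word $d^{k_1}y$ is primitive in $\mathcal{H}_0$, i.e.\ $\tilde\Delta_0(d^{k_1}y)=0$ (as recorded in the proof of Proposition \ref{cor:2.1.1}), so $\bar\phi(d^{k_1}y)=\phi(d^{k_1}y)=\partial_z^{k_1}x(z)$. Differentiating term by term, the polar part $-\tfrac1z$ contributes nothing to the constant term, so the constant term of $\partial_z^{k_1}x(z)$ equals $k_1!$ times the coefficient of $z^{k_1}$ in the holomorphic part of $x$; for $k_1\ge1$ this is $-\frac{B_{k_1+1}}{k_1+1}=\zeta(-k_1)$, and for $k_1=0$ it is $-\tfrac12=\zeta(0)$.

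Next I would handle the depth-two case. Proposition \ref{cor:2.1.1} with $n=2$ gives
\begin{equation*}
	\tilde\Delta_0(d^{k_2}y\,d^{k_1}y)=\sum_{i+j=k_2}\binom{k_2}{i}\bigl(d^iy\otimes d^{k_1+j}y+d^{k_1+j}y\otimes d^iy\bigr),
\end{equation*}
each tensor factor being primitive of depth one, so $\phi_-(d^ay)=-\pi\bigl(\partial_z^a x(z)\bigr)=(-1)^aa!\,z^{-a-1}$. Feeding this together with $\phi(d^{k_2}y\,d^{k_1}y)(z)=\partial_z^{k_2}\!\bigl(x(z)\,\partial_z^{k_1}x(z)\bigr)$ into $\bar\phi$ yields $\bar\phi(d^{k_2}y\,d^{k_1}y)$ as an explicit Laurent series whose constant term is a polynomial in Bernoulli numbers. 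It then remains to identify this constant term with $\zeta(-k_1,-k_2)$: by the singular locus (\ref{eqn:0.2}) the meromorphic MZF is regular at $(-k_1,-k_2)$ exactly when $k_1+k_2$ is odd, and there its value is the closed form of \cite{AET}; matching the two is a Bernoulli-number identity, in which the vanishing of the odd Bernoulli numbers $B_3,B_5,\dots$ plays the usual role.

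I expect this last identification to be the main obstacle: the raw output of the recursion is a double sum of products of principal and holomorphic parts, and collapsing its constant term to the \cite{AET} evaluation takes some manipulation, the symmetrized shape $a\otimes b+b\otimes a$ of $\tilde\Delta_0$ being what produces the characteristic factor $\tfrac12$. A possibly lighter alternative would be to argue indirectly: show that $\zeta_{\scalebox{0.5}{\rm EMS}}(-k_1,-k_2)$ and $\zeta(-k_1,-k_2)$ satisfy the same relation coming from the shuffle relation (Proposition \ref{prop:2.3.1}) together with the depth-one normalization already proved, and that these constraints pin the value down whenever $k_1+k_2$ is odd; but the direct Bernoulli computation is the more self-contained, so I would pursue that first.
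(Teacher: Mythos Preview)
The paper does not supply its own proof of this proposition: it is quoted verbatim from \cite{EMS1} Theorem 4.3 and left unproved here. So there is no in-paper argument to compare your proposal against.

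On its own merits, your plan is the natural one and the depth-one part is complete and correct: since $d^{k_1}y$ is primitive the Bogoliubov preparation reduces to $\phi$ itself, and extracting the constant term of $\partial_z^{k_1}x(z)$ from the Laurent expansion of $x(z)=\frac{e^z}{1-e^z}$ gives $-\tfrac{B_{k_1+1}}{k_1+1}=\zeta(-k_1)$ as you say. Your use of Proposition \ref{cor:2.1.1} at $n=2$ and the resulting formula for $\phi_-(d^ay)$ are also correct.

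The only genuine gap is the one you flag yourself: in depth two you stop at ``matching the two is a Bernoulli-number identity'' without carrying it out. That step is not a formality. The constant term of $\bar\phi(d^{k_2}y\,d^{k_1}y)$ involves cross-terms between the polar part of one factor and the holomorphic part of the other, summed over $i+j=k_2$, and the comparison with the Akiyama--Egami--Tanigawa closed form needs the parity hypothesis $k_1+k_2$ odd in an essential way (not merely to ensure regularity of the MZF). Your alternative route via the shuffle relation (Proposition \ref{prop:2.3.1}) is closer to how \cite{EMS1} actually argues and is likely cleaner than the direct computation, so if you pursue this you may want to swap the order of your two strategies.
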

We remind that, as is showed in the set (\ref{eqn:0.2}), $\zeta(s_1,\cdots,s_n)$ is always irregular at $(s_1,\cdots,s_n)=(-k_1,\cdots,-k_n) \in \mathbb{Z}_{<0}^n$ for $n\geq3$.

Another remarkable property of the renormalized values is that a certain shuffle relation hold for them. Because $\shuffle_0$ is the product of $\mathcal{H}_0$ and $\phi_+:\mathcal{H}_0\rightarrow\mathbb{Q}[[z]]$ is a unital algebra homomorphism by Theorem \ref{thm:2.2.1}, we obtain the following proposition:
\begin{prop}[\cite{EMS1} \S4.2: {\bf shuffle relation}]\label{prop:2.3.1}
	For $w,w'\in Y$, we have
	$$\phi_+(w\shuffle_0w')=\phi_+(w)\phi_+(w').$$
\end{prop}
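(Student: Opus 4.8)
The plan is to read the statement off directly from the compatibility of the algebraic Birkhoff decomposition with algebra structure. First I would record that $\phi\colon\mathcal{H}_0\to\mathcal{A}$ lies in $G(\mathcal{H}_0,\mathcal{A})$ (since $\phi({\bf 1})=1={\bf 1}_{\mathcal{A}}$) and, by the proposition just quoted from \cite{EMS1} \S4.2, is an algebra homomorphism. Since $\mathcal{H}_0$ is a connected filtered Hopf algebra over $\mathbb{Q}$ (by its construction in \S2.1 as $\mathbb{Q}\langle L\rangle/\mathcal{S}_-$) and $\mathcal{A}=\mathbb{Q}[\tfrac1z,z]]$ carries the splitting $\mathcal{A}=\mathcal{A}_-\oplus\mathcal{A}_+$, Theorem \ref{thm:2.2.1} applies: it produces the factorization $\phi=\phi_-^{-1}*\phi_+$ with $\phi_+\colon\mathcal{H}_0\to\mathcal{A}_+$, and its ``moreover'' clause tells us that, because $\phi$ is an algebra homomorphism, so are $\phi_-$ and $\phi_+$. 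In particular $\phi_+$ is multiplicative and unital ($\phi_+({\bf 1})={\bf 1}_{\mathcal{A}_+}$).

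Next I would invoke that the product of the Hopf algebra $\mathcal{H}_0$ is precisely $\shuffle_0$, which is how $\mathcal{H}_0$ was equipped in \S2.1. Multiplicativity of $\phi_+$ then reads $\phi_+(a\shuffle_0 b)=\phi_+(a)\,\phi_+(b)$ in $\mathcal{A}_+$ for all $a,b\in\mathcal{H}_0$. Specializing to $a=w$, $b=w'$ with $w,w'\in Y$ — which makes sense because every element of $Y$ is admissible, hence descends to $\mathcal{H}_0$, and because $\mathbb{Q}\langle L\rangle$ (so also $\mathcal{H}_0$) is closed under $\shuffle_0$, so that $w\shuffle_0 w'$ is a bona fide element of $\mathcal{H}_0$ even though it need not be a single word — yields the asserted identity $\phi_+(w\shuffle_0 w')=\phi_+(w)\phi_+(w')$; the cases $w={\bf 1}$ or $w'={\bf 1}$ are covered by the unitality of $\phi_+$.

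There is no real obstacle remaining at this stage: the substantive inputs are (i) that $\phi$ is an algebra homomorphism, which is the content of the preceding proposition, and (ii) the nontrivial fact that the Birkhoff decomposition of an algebra homomorphism proceeds through algebra homomorphisms, which is already part of the statement of Theorem \ref{thm:2.2.1}. The only point deserving an explicit line is the remark, made above, that $\phi_+(w\shuffle_0 w')$ is to be understood as $\phi_+$ applied to the element $w\shuffle_0 w'\in\mathcal{H}_0$, which is legitimate precisely because $\shuffle_0$ restricts to a product on $\mathbb{Q}\langle L\rangle$ and hence on $\mathcal{H}_0$.
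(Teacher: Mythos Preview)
Your argument is correct and matches the paper's own reasoning: the paper simply notes that $\shuffle_0$ is the product on $\mathcal{H}_0$ and that $\phi_+$ is a unital algebra homomorphism by Theorem~\ref{thm:2.2.1}, from which the proposition follows immediately. You have spelled out the same logic in more detail, but the approach is identical.
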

Here are examples in lower depth:
\begin{example}
	For $a,b,c \in \mathbb{N}_0$, we have 
	\begin{align*}
		&\zeta_{\scalebox{0.5}{\rm EMS}}(-a)\cdot\zeta_{\scalebox{0.5}{\rm EMS}}(-b)=\left\{\begin{array}{lc}
			\displaystyle\sum_{k=0}^a(-1)^k\binom{a}{k}\zeta_{\scalebox{0.5}{\rm EMS}}(-b-k,-a+k)&\mbox{if $b\geq1$},\\
			\zeta_{\scalebox{0.5}{\rm EMS}}(-a,0)&\mbox{if $b=0$},\end{array}\right.\\
		&\zeta_{\scalebox{0.5}{\rm EMS}}(-a)\cdot\zeta_{\scalebox{0.5}{\rm EMS}}(-b,-c)=\left\{\begin{array}{lc}
			\displaystyle\sum_{k=0}^c(-1)^k\binom{c}{k}\zeta_{\scalebox{0.5}{\rm EMS}}(-b,-c-k,-a+k)&\mbox{if $c\geq1$},\\
			\displaystyle\sum_{k=0}^c(-1)^k\binom{c}{k}\zeta_{\scalebox{0.5}{\rm EMS}}(-b-k,-a+k,0)&\mbox{if $b\geq1$, $c=0$},\\
			\zeta_{\scalebox{0.5}{\rm EMS}}(-a,0,0)&\mbox{if $b=c=0$}.\end{array}\right.
	\end{align*}
	For our comparison, we remind below the usual shuffle relation for positive arguments. For $a,b\in\mathbb{N}_{>1}$,
	\begin{align*}
		\zeta(a)\cdot\zeta(b)=&\displaystyle\sum_{k=0}^{a-1}\binom{b-1+k}{k}\zeta(a-k,b+k)+\sum_{k=0}^{b-1}\binom{a-1+k}{k}\zeta(b-k,a+k),\\
		\intertext{and for $a,c\in\mathbb{N}_{>1}$ and $b\in\mathbb{N}$,}
		\zeta(a)\cdot\zeta(b,c)=&\displaystyle\sum_{k=0}^{a-1}\sum_{i=0}^{a-k-1}\binom{c-1+k}{k}\binom{b-1+i}{i}\zeta(a-k-i,b+i,c+k)\\
		&+\sum_{k=0}^{a-1}\sum_{j=0}^{b-1}\binom{c-1+k}{k}\binom{a-k-1+j}{j}\zeta(b-j,a-k+j,c+k)\\
		&+\sum_{k=0}^{c-1}\binom{a-1+k}{k}\zeta(b,c-k,a+k).
	\end{align*}
\end{example}
\section{Main results}\label{sec:3}
In this section, we prove a recurrence formula among renormalized values of MZFs in Proposition \ref{thm:3.1.1}. Moreover, by showing that the renormalized value $\zeta_{\scalebox{0.5}{\rm EMS}}(-k_1,\dots, -k_n)$ satisfies the recurrence formula similar to the one (\ref{eqn:1.2.2}) for $\zeta_{\scalebox{0.5}{\rm FKMT}}(-k_1,\dots, -k_n)$, we  prove an equivalence between the desingularized values and the renormalized values in Theorem \ref{thm:3.2.1}. As a corollary of Theorem \ref{thm:3.2.1}, we obtain an explicit formula of renormalized values (Corollary \ref{cor:3.2.1}).
\subsection{Recurrence formulas among renormalized values}
The goal of this subsection is to prove Proposition \ref{thm:3.1.1} which is on recurrence formula among renormalized values.

We start with the following key lemma of \cite{EMS1} which is a method to compute recursively the image of $\phi_+$ (the equation (\ref{eqn:2.3.1})).
\begin{lem}[\cite{EMS1} Corollary 4.4]\label{lem:3.1.1}
	For $w \in Y$ with ${\rm dp}(w)>1$, we have
	\begin{equation*}
		\phi_+(w) = \frac{1}{2^{{\rm dp}(w)}-2}\sum_{(w)}\phi_+(w')\phi_+(w'').
	\end{equation*}
	Here we use Sweedler's notation (\ref{eqn:2.2.1}).
\end{lem}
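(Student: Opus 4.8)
The plan is to recast the asserted recursion as an identity between two linear maps, prove the corresponding identity for the unrenormalized character $\phi$ by a short induction, and then transport it to $\phi_+$ using the uniqueness part of the algebraic Birkhoff decomposition.

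\emph{Step 1: reduction.} Fix $w\in Y$ with $d:={\rm dp}(w)>1$. From $\tilde{\Delta}_0(w)=\Delta_0(w)-{\bf 1}\otimes w-w\otimes{\bf 1}$ and $\phi_+({\bf 1})=1$ one gets $(\phi_+*\phi_+)(w)=m_{\mathcal{A}}(\phi_+\otimes\phi_+)\Delta_0(w)=2\phi_+(w)+\sum_{(w)}\phi_+(w')\phi_+(w'')$, so the lemma is equivalent to $(\phi_+*\phi_+)(w)=2^{d}\,\phi_+(w)$; this last equality holds trivially when $d\le1$ since then $\tilde{\Delta}_0(w)=0$. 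Introduce the $\mathbb{Q}$-linear "depth dilation" $\theta\colon\mathcal{H}_0\to\mathcal{H}_0$ with $\theta(w)=2^{{\rm dp}(w)}w$ on words. Because $\shuffle_0$ is additive in depth and every term of $\Delta_0(w)$ splits ${\rm dp}(w)$ additively, $\theta$ is a filtration-preserving Hopf algebra endomorphism of $\mathcal{H}_0$ (in particular $\varepsilon\circ\theta=\varepsilon$ and $\theta({\bf 1})={\bf 1}$), and the lemma becomes the single clean statement $\phi_+*\phi_+=\phi_+\circ\theta$ on $\mathcal{H}_0$.

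\emph{Step 2: the identity for $\phi$.} I claim $\phi*\phi=\phi\circ\theta$, that is $(\phi*\phi)(w)=2^{{\rm dp}(w)}\phi(w)$ for all $w\in Y$, and prove it by induction on ${\rm wt}(w)$. If ${\rm wt}(w)\le1$, i.e. $w\in\{{\bf 1},y\}$, then $\tilde{\Delta}_0(w)=0$ and both sides equal $2^{{\rm dp}(w)}\phi(w)$. For the step, write $w=dv$ or $w=yv$ with $v\in Y\setminus\{{\bf 1}\}$ of strictly smaller weight. Straight from the definition of $\phi$ one has $\phi(dv)=\partial_z\phi(v)$ and $\phi(yv)=x(z)\,\phi(v)$, and therefore, unwinding the definition of $\overline{f}$, for $u,u'\in Y\setminus\{{\bf 1}\}$
\[
 m_{\mathcal{A}}(\phi\otimes\phi)\bigl(d\bullet(u\otimes u')\bigr)=\partial_z\bigl(\phi(u)\phi(u')\bigr),\qquad m_{\mathcal{A}}(\phi\otimes\phi)\bigl(y\bullet(u\otimes u')\bigr)=2x(z)\,\phi(u)\phi(u').
\]
Combining this with Proposition~\ref{prop:2.1.1} and the induction hypothesis $m_{\mathcal{A}}(\phi\otimes\phi)\tilde{\Delta}_0(v)=(\phi*\phi)(v)-2\phi(v)=(2^{{\rm dp}(v)}-2)\phi(v)$, the case $w=dv$ gives
\[
 (\phi*\phi)(w)=2\phi(w)+\partial_z\bigl[(2^{{\rm dp}(v)}-2)\phi(v)\bigr]=2\phi(w)+(2^{{\rm dp}(w)}-2)\phi(w)=2^{{\rm dp}(w)}\phi(w),
\]
using $\partial_z\phi(v)=\phi(w)$ and ${\rm dp}(v)={\rm dp}(w)$; the case $w=yv$ is entirely analogous via $\tilde{\Delta}_0(yv)=y\bullet\tilde{\Delta}_0(v)+y\otimes_{\rm sym}v$, now with $x(z)\phi(v)=\phi(w)$ and ${\rm dp}(v)={\rm dp}(w)-1$, the same cancellation $2+(2^{{\rm dp}(w)}-2)=2^{{\rm dp}(w)}$ taking place.

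\emph{Step 3: descent, and the main obstacle.} Two uniqueness arguments in Theorem~\ref{thm:2.2.1} finish the proof. First, since $\theta$ is a coalgebra endomorphism with $\varepsilon\circ\theta=\varepsilon$, precomposition with $\theta$ is an endomorphism of the convolution group $G(\mathcal{H}_0,\mathcal{A})$ preserving the set of maps into $\mathbb{Q}\oplus\mathcal{A}_-$ and the set of maps into $\mathcal{A}_+$; hence $\phi\circ\theta=(\phi_-\circ\theta)^{-1}*(\phi_+\circ\theta)$ is an algebraic Birkhoff decomposition, and by uniqueness $(\phi\circ\theta)_\pm=\phi_\pm\circ\theta$. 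Second, because $\mathcal{H}_0$ is cocommutative and $\mathcal{A}$ is commutative, $\mathcal{L}(\mathcal{H}_0,\mathcal{A})$ is a commutative convolution algebra, so $\phi*\phi=\phi_-^{-1}*\phi_+*\phi_-^{-1}*\phi_+=(\phi_-*\phi_-)^{-1}*(\phi_+*\phi_+)$; as $\mathbb{Q}\oplus\mathcal{A}_-$ and $\mathcal{A}_+$ are subalgebras of $\mathcal{A}$, this too is an algebraic Birkhoff decomposition, so $(\phi*\phi)_\pm=\phi_\pm*\phi_\pm$. With Step 2 this yields $\phi_+*\phi_+=(\phi*\phi)_+=(\phi\circ\theta)_+=\phi_+\circ\theta$, which is the reduced form of the lemma obtained in Step 1. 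The bulk of the work is the bookkeeping in Step 2 — reading off $\phi(dv)=\partial_z\phi(v)$, $\phi(yv)=x(z)\phi(v)$ and the resulting behaviour of $m_{\mathcal{A}}(\phi\otimes\phi)$ under $d\bullet$ and $y\bullet$ from the (slightly intricate) definition of $f$, $\overline{f}$, and checking that all these manipulations are legitimate modulo $\mathcal{S}_-$. The one structurally delicate point is the use of cocommutativity of $\mathcal{H}_0$ in Step 3: without it one would not have $(\phi*\phi)_+=\phi_+*\phi_+$.
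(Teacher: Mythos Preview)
The paper does not give its own proof of Lemma~\ref{lem:3.1.1}; it merely quotes the statement from \cite{EMS1} Corollary~4.4 and uses it as input. So there is no paper proof to compare against, only the question of whether your argument is sound.

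Your proof is correct. The reduction in Step~1 is clean: rewriting the claim as $\phi_+*\phi_+=\phi_+\circ\theta$ with the depth-rescaling $\theta(w)=2^{{\rm dp}(w)}w$ is the right move, and $\theta$ is indeed a Hopf algebra endomorphism because both $\shuffle_0$ and $\Delta_0$ are additive in depth (every term $w_S\otimes w_{\overline S}$ in $\Delta_0(w)$ partitions the $y$'s of $w$). Step~2 goes through: the identities $\phi(dv)=\partial_z\phi(v)$ and $\phi(yv)=x(z)\phi(v)$ for $v\in Y\setminus\{{\bf 1}\}$ are immediate from the definition of $\phi$, the compatibility of $m_{\mathcal A}(\phi\otimes\phi)$ with $d\bullet$ and $y\bullet$ follows, and the induction on weight using Proposition~\ref{prop:2.1.1} closes with the correct arithmetic in both cases (in the $y$-case the extra term $y\otimes_{\rm sym}v$ contributes $2\phi(w)$, giving $2+2+(2^{{\rm dp}(v)}-2)\cdot 2=2^{{\rm dp}(v)+1}=2^{{\rm dp}(w)}$). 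Step~3 is also fine: precomposition with the coalgebra map $\theta$ and the commutativity of convolution (using that $\mathcal{H}_0$ is cocommutative and $\mathcal A$ commutative, as stated in the paper) each produce a Birkhoff factorisation, and the uniqueness in Theorem~\ref{thm:2.2.1} identifies both $(\phi\circ\theta)_+$ and $(\phi*\phi)_+$ with $\phi_+\circ\theta$ and $\phi_+*\phi_+$ respectively. The images land in $\mathcal A_+$ and $\mathbb{Q}\oplus\mathcal A_-$ because these are subalgebras of $\mathcal A$, as you note. Altogether your argument is a valid, self-contained proof within the paper's framework; the one structural hypothesis you rely on that the paper uses nowhere else, namely cocommutativity of $\mathcal H_0$, is explicitly asserted in \S2.1.
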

\begin{prop}\label{prop:3.1.1}
	For $n \in \mathbb{N}_{\geq 2}$ and $k_1, \dots, k_n \in \mathbb{N}_0$, we have
	\begin{align}
		\label{eqn:3.1.1} &\zeta_{\scalebox{0.5}{\rm EMS}}(-k_1, \dots, -k_n) = \frac{1}{2^{n-1}-1}\left\{ \sum_{i_n + j_n=k_n}\binom{k_n}{i_n}\zeta_{\scalebox{0.5}{\rm EMS}}(-i_n) \zeta_{\scalebox{0.5}{\rm EMS}}(-k_1,\dots, -k_{n-1}-j_n)\right. \\
		&+ \sum_{p=2}^{n-1}\sum_{\substack{i_n + j_n=k_n\\ \scalebox{0.5}{\rotatebox{90}{$\cdots$}}\\i_{p} + j_{p}=k_{p}}}\prod_{a=p}^n\binom{k_{a}}{i_{a}} \nonumber \\
		&\left. \times\sum_{\substack{\{\circ_q,\ \diamond_q\}=\{+,\ \scalebox{2}{,}\ \}\\p\leq q\leq n-1}}\zeta_{\scalebox{0.5}{\rm EMS}}(-i_{p}\circ_{p}\cdots \circ_{n-1} -i_n)\zeta_{\scalebox{0.5}{\rm EMS}}(-k_{1},\dots, -k_{p-1}-j_{p}\diamond_{p}  \cdots\diamond_{n-1} -j_n)\right\}. \nonumber
	\end{align}
\end{prop}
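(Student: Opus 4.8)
The plan is to recognise $\zeta_{\scalebox{0.5}{\rm EMS}}(-k_1,\dots,-k_n)$ as the value at $z=0$ of $\phi_+$ on the admissible word
\[
	w:=w_{k_n}\cdots w_{k_1}=d^{k_n}y\cdots d^{k_1}y\in Y
\]
(recall $w_m=d^m y$); this is precisely Definition \ref{def:2.3.1}. Since ${\rm dp}(w)=n\ge 2>1$, Lemma \ref{lem:3.1.1} applies and gives
\[
	\phi_+(w)=\frac{1}{2^n-2}\sum_{(w)}\phi_+(w')\phi_+(w''),\qquad \sum_{(w)}w'\otimes w''=\tilde{\Delta}_0(w).
\]
The first step is to substitute into this the closed form of $\tilde{\Delta}_0(w)$ supplied by Proposition \ref{cor:2.1.1}, applied to $w_{k_n}w_{k_{n-1}}\cdots w_{k_1}$, that is, with the sequence $k_1,\dots,k_n$ there read in reverse order. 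The right-hand side then becomes an explicit finite sum of products $\phi_+(a)\phi_+(b)$ indexed by the outer index $p$, by the splittings $i_a+j_a=k_a$, and by the binary choices $\{u_q,v_q\}=\{d^{i_q},d^{j_q}y\}$.

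Next I would pass to $z=0$. After absorbing powers of $d$ into the following letter via $d^{a}d^{b}y=d^{a+b}y$ in the free monoid $L^*$, every tensor factor occurring in Proposition \ref{cor:2.1.1} is a single admissible word $d^{a_1}y\cdots d^{a_m}y$, and Definition \ref{def:2.3.1} identifies $\lim_{z\to0}\phi_+(d^{a_1}y\cdots d^{a_m}y)$ with $\zeta_{\scalebox{0.5}{\rm EMS}}(-a_m,\dots,-a_1)$. Since $\mathcal{A}_+=\mathbb{Q}[[z]]$ is commutative, each symmetrised summand $a\otimes_{\rm sym}b$ contributes $\phi_+(a)\phi_+(b)+\phi_+(b)\phi_+(a)=2\,\phi_+(a)\phi_+(b)$ to the Sweedler sum, and, evaluation at $z=0$ being a ring homomorphism $\mathbb{Q}[[z]]\to\mathbb{Q}$, this contribution becomes $2\bigl(\lim_{z\to0}\phi_+(a)\bigr)\bigl(\lim_{z\to0}\phi_+(b)\bigr)$, a product of two $\zeta_{\scalebox{0.5}{\rm EMS}}$-values. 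The surplus factor $2$ converts $\frac{1}{2^n-2}$ into $\frac{1}{2^{n-1}-1}$, exactly the prefactor in \eqref{eqn:3.1.1}.

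What remains is to reconcile the combinatorial bookkeeping. Reindexing by $p\mapsto n+1-p$ together with $i_a^{\rm old}=i_{n+1-a}$ and $j_a^{\rm old}=j_{n+1-a}$ carries the splittings $i_a+j_a=k_{n+1-a}$ of Proposition \ref{cor:2.1.1} to $i_a+j_a=k_a$ for $p\le a\le n$, the weight $\prod_a\binom{k_{n+1-a}}{i_{n+1-a}}$ to $\prod_{a=p}^{n}\binom{k_a}{i_a}$, and merely reverses the range of $\sum_{p=2}^{n-1}$. Reading off the left word $u_1\cdots u_{p-1}d^{i_p}y$ of a summand of $\tilde{\Delta}_0(w)$: a block $d^{i_q}$ carrying no $y$ amounts to $\circ_q$ equal to $+$, the two flanking $-i$'s being summed into one argument of the first $\zeta_{\scalebox{0.5}{\rm EMS}}$-factor, whereas $d^{j_q}y$ amounts to $\circ_q$ equal to a comma, opening a new argument; the terminal block $d^{i_p}y$ always supplies the leading entry $-i_p$, while on the right the terminal $d^{j_p}$ has fused with $w_{k_{p-1}}$ into $d^{j_p+k_{p-1}}y$ and supplies the entry $-k_{p-1}-j_p$. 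Finally, using the invariance of $\sum_{i_q+j_q=k_q}\binom{k_q}{i_q}$ under $i_q\leftrightarrow j_q$, the summands in which the left word carries a $d^{j_q}y$-block may be relabelled so that the first factor becomes uniformly a function of the $i$'s and the second of the $j$'s, with $\circ_q$ and $\diamond_q$ always complementary; this recovers \eqref{eqn:3.1.1}. I expect this last step --- the simultaneous reindexing and the dictionary between the $\{u_q,v_q\}$-encoding of the coproduct and the $\{\circ_q,\diamond_q\}$-encoding of the statement --- to be the only genuinely delicate point, everything else being formal and resting on Lemma \ref{lem:3.1.1} and Proposition \ref{cor:2.1.1} alone.
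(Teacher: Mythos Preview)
Your proposal is correct and follows essentially the same route as the paper's proof: apply Lemma \ref{lem:3.1.1} to $w=w_{k_n}\cdots w_{k_1}$, substitute the explicit reduced coproduct from Proposition \ref{cor:2.1.1} (with the index sequence reversed), absorb the factor $2$ from $\otimes_{\rm sym}$ into the prefactor, and translate each $\phi_+$-factor into a $\zeta_{\scalebox{0.5}{\rm EMS}}$-value via Definition \ref{def:2.3.1}. The paper compresses your reindexing $p\mapsto n+1-p$ by applying Proposition \ref{cor:2.1.1} directly to the reversed word, and encodes your $\{u_q,v_q\}\leftrightarrow\{\circ_q,\diamond_q\}$ dictionary (including the implicit $i_q\leftrightarrow j_q$ relabelling you spell out) in a single two-line definition; your more explicit account of this step is a welcome clarification of what the paper leaves to the reader.
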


\begin{proof}
	By Proposition \ref{cor:2.1.1} and Lemma \ref{lem:3.1.1}, for $n \in \mathbb{N}_{\geq 2}$ and $k_1, \dots, k_n \in \mathbb{N}_0$ we get 
	\begin{align*}
		&\phi_+(w_{k_n}\cdots w_{k_1}) = \frac{1}{2^{n-1}-1}\left\{\vphantom{\sum_{\substack{i_1 + j_1=k_1\\ \scalebox{0.5}{\rotatebox{90}{$\cdots$}}\\i_p + j_p=k_p}}}\sum_{i_n + j_n=k_n} \binom{k_n}{i_n}\phi_+(d^{i_n}y)\phi_+(d^{j_n}w_{k_{n-1}}\cdots w_{k_1})\right. \\
		&\left.+ \sum_{p=2}^{n-1}\sum_{\substack{i_n + j_n=k_n\\ \scalebox{0.5}{\rotatebox{90}{$\cdots$}}\\i_p + j_p=k_p}}\prod_{a=p}^n\binom{k_a}{i_a}\sum_{\substack{\{u_q,\ v_q\}=\{d^{i_q},\ d^{j_q}y\}\\p+1\leq q\leq n}}\hspace{-1cm}\phi_+(u_{n} \cdots u_{p+1}d^{i_p}y)\phi_+(v_n \cdots v_{p+1}d^{j_p}w_{k_{p-1}}\cdots w_{k_{1}})\right\},
	\end{align*}
	because ${\rm dp}(w) = n$. For $p\leq q\leq n-1$, we define 
	\begin{equation*}
		(\circ_q,\diamond_q) :=\left\{\begin{array}{ll}
			(+,\scalebox{2}{,}) & \mbox{if}\ (u_{q+1},v_{q+1}) = (d^{i_{q+1}},d^{j_{q+1}}y),\\
			(\scalebox{2}{,},+) & \mbox{if}\ (u_{q+1},v_{q+1}) = (d^{j_{q+1}}y,d^{i_{q+1}}).
		\end{array}\right.
	\end{equation*}
	Then by the definition of $\zeta_{\scalebox{0.5}{\rm EMS}}(-k_1, \dots, -k_n)$, the equation (\ref{eqn:3.1.1}) holds.
\end{proof}
We define the following generating functions in $\mathbb{C}[[x]]$ for $n \in \mathbb{N}_{\geq 2}$ and $k_1, \dots, k_n \in \mathbb{N}_0$:
\begin{align*}
	\frak{h}:=\frak{h}(x) &:= \sum_{k_1=0}^{\infty}\frac{(-x)^{k_1}}{k_1!}\zeta_{\scalebox{0.5}{\rm EMS}}(-k_1), \\
	\frak{h}_{k_1,\dots,k_{n-1}}(x) &:= \sum_{k_n=0}^{\infty}\frac{(-x)^{k_n}}{k_n!}\zeta_{\scalebox{0.5}{\rm EMS}}(-k_1,\dots,-k_n), \\
	\overline{\frak{h}}_{k_1,\dots,k_n}(x) &:= \partial_{x}^{k_n}\frak{h}_{k_1,\dots,k_{n-1}}(x).
\end{align*}
Here for $n\in\mathbb{N}$, we set $\frak{h}_{k_1,\dots,k_{n-1}}(x):=\frak{h}(x)$.

The equation (\ref{eqn:3.1.1}) looks complicated. But it can be simplified to the following recurrence formula (\ref{eqn:3.1.2}).
\begin{prop}\label{thm:3.1.1}
	For $n \in \mathbb{N}_{\geq 2}$ and $k_1, \dots, k_n \in \mathbb{N}_0$, we have
	\begin{equation}\label{eqn:3.1.2}
		\zeta_{\scalebox{0.5}{\rm EMS}}(-k_1, \dots, -k_n) = \sum_{i_n + j_n=k_n}\binom{k_n}{i_n}\zeta_{\scalebox{0.5}{\rm EMS}}(-i_n) \zeta_{\scalebox{0.5}{\rm EMS}}(-k_1,\dots, -k_{n-1}-j_n),
	\end{equation}
	and 
	\begin{equation}\label{eqn:3.1.3}
		\frak{h}_{k_1,\dots,k_{n-1}}(x)=(-1)^{k_1+\cdots+k_{n-1}}\left(\frak{h}\partial_x^{k_{n-1}}\right)\cdots\left(\frak{h}\partial_x^{k_1}\right)\left(\frak{h}\right).
	\end{equation}
\end{prop}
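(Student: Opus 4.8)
The plan is to derive both displayed identities simultaneously from the recurrence in Proposition~\ref{prop:3.1.1} by passing to generating functions. First I would fix $n\in\mathbb{N}_{\geq2}$ and introduce, alongside $\frak{h}_{k_1,\dots,k_{n-1}}(x)$, the multivariable generating series attached to the right-hand side of (\ref{eqn:3.1.1}); the key combinatorial observation is that the inner sum over $\{\circ_q,\diamond_q\}=\{+,\scalebox{2}{,}\}$ is exactly the sum over all ways of splitting a chain, so that in generating-function form it collapses to an ordered product of factors, each factor being either $\frak{h}$ (coming from a $\circ_q=+$, i.e.\ a merge into the short argument string) or a shift operator $\partial_x$ acting on the tail (coming from $\diamond_q=\scalebox{2}{,}$). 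Concretely, multiplying (\ref{eqn:3.1.1}) by $(-x)^{k_n}/k_n!$ and summing over $k_n$, the binomial convolution $\sum_{i_n+j_n=k_n}\binom{k_n}{i_n}(\cdots)$ turns into a product of exponential generating functions, and the $d^{j_n}$-type shifts become $\partial_x^{j_n}$, which at the level of generating functions means precompose-with-$\partial_x$ on $\overline{\frak{h}}$; summing the telescoping family over $p$ then produces a geometric-type series whose sum is controlled by the factor $\frac{1}{2^{n-1}-1}$.

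The cleanest route, which I would actually carry out, is a double induction on $n$ and then on $k_n$. The base case $n=1$ is the definition $\frak{h}_{k_1,\dots,k_{n-1}}(x):=\frak{h}(x)$, and $n=2$ follows directly from (\ref{eqn:3.1.1}) with the $p$-sum empty. For the inductive step I would take (\ref{eqn:3.1.1}), rewrite each $\zeta_{\scalebox{0.5}{\rm EMS}}(-i_p\circ_p\cdots\circ_{n-1}-i_n)$ using the already-known depth-$(<n)$ cases to express it via $\frak{h}$ and derivatives, substitute the inductive form (\ref{eqn:3.1.3}) for the depth-$(n-1)$-and-lower factors $\zeta_{\scalebox{0.5}{\rm EMS}}(-k_1,\dots,-k_{p-1}-j_p\diamond_p\cdots)$, and then recognize the resulting sum as a finite geometric series in a single ``transfer operator'': informally, one shows that the map sending the generating function of depth-$(n-1)$ data to the generating function of depth-$n$ data is, up to the constant $\frac1{2^{n-1}-1}$, multiplication by $(1+1+\cdots)$ with exactly $2^{n-1}-2$ copies coming from the $\{u_q,v_q\}$ choices plus the two boundary terms, so the constant cancels and one is left precisely with $\frak{h}_{k_1,\dots,k_{n-1}}(x)=(-1)^{k_1+\cdots+k_{n-1}}(\frak{h}\partial_x^{k_{n-1}})\cdots(\frak{h}\partial_x^{k_1})(\frak{h})$. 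Extracting the coefficient of $(-x)^{k_n}/k_n!$ from the identity $\frak{h}_{k_1,\dots,k_{n-1}}(x)=(-1)^{k_1+\cdots+k_{n-1}}(\frak{h}\partial_x^{k_{n-1}})\cdots(\frak{h}\partial_x^{k_1})(\frak{h})$ and using $\overline{\frak{h}}_{k_1,\dots,k_n}=\partial_x^{k_n}\frak{h}_{k_1,\dots,k_{n-1}}$ then yields the numerical recurrence (\ref{eqn:3.1.2}) after reading off that $\partial_x$ corresponds to the binomial-weighted shift $k_{n-1}\mapsto k_{n-1}-j_n$; conversely (\ref{eqn:3.1.2}) is what one gets by differentiating (\ref{eqn:3.1.3}), so the two statements are equivalent and proving either suffices.

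The main obstacle I anticipate is purely bookkeeping: matching the combinatorics of the pair-choices $\{\circ_q,\diamond_q\}$ in (\ref{eqn:3.1.1}) with the ordered noncommutative product $(\frak{h}\partial_x^{k_{n-1}})\cdots(\frak{h}\partial_x^{k_1})(\frak{h})$, because the operators $\frak{h}$ (multiplication) and $\partial_x$ do not commute, so one must be careful that the index $p$ in the sum corresponds to the correct position in the operator word and that the binomial coefficients $\prod\binom{k_a}{i_a}$ are precisely accounted for by the Leibniz rule $\partial_x^{k}(\frak{h}\cdot g)=\sum_{i+j=k}\binom{k}{i}(\partial_x^i\frak{h})(\partial_x^j g)$ applied repeatedly. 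Once the dictionary ``$+\leftrightarrow$ insert a factor $\frak{h}$, $\scalebox{2}{,}\leftrightarrow$ apply $\partial_x$ to everything to its right'' is set up correctly, the factor $2^{n-1}-1$ cancels against the count of splitting patterns ($2^{n-1}-2$ interior patterns plus the two terms with $p=1$ and with the full tail), and both identities drop out; the sign $(-1)^{k_1+\cdots+k_{n-1}}$ is tracked automatically by the $(-x)^{k_i}/k_i!$ normalizations in the definitions of $\frak{h}$ and $\frak{h}_{k_1,\dots,k_{n-1}}$.
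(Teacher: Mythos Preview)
Your approach is essentially the same as the paper's: induct on $n$, pass to generating functions, use the Leibniz rule to show that every choice of the pattern $\{\circ_q,\diamond_q\}$ in Proposition~\ref{prop:3.1.1} contributes the \emph{same} amount (this is isolated in the paper as a separate Lemma~\ref{lem:3.1.4}), and then count the patterns to cancel the prefactor $\frac{1}{2^{n-1}-1}$. One small correction: the count is $1$ term from the $p=1$ part of (\ref{eqn:3.1.1}) plus $\sum_{p=2}^{n-1}2^{n-p}=2^{n-1}-2$ pattern terms, giving $2^{n-1}-1$; there is only one ``boundary'' term, not two. The extra induction on $k_n$ you mention is not needed---induction on $n$ alone suffices, exactly as you outline elsewhere in your sketch.
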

\begin{proof}
	We prove (\ref{eqn:3.1.2}) and (\ref{eqn:3.1.3}) by induction on $n\in \mathbb{N}_{\geq2}$. Let $n=2$. Then by the equation (\ref{eqn:3.1.1}) of Proposition \ref{prop:3.1.1}, the equation (\ref{eqn:3.1.2}) clearly holds. And by the equation (\ref{eqn:3.1.3}) for $n=2$, we have
	\begin{align}
		\label{eqn:3.1.6}\frak{h}_{k_1}(x)&=\sum_{k_2=0}^{\infty}\frac{(-x)^{k_2}}{k_2!}\zeta_{\scalebox{0.5}{\rm EMS}}(-k_1, -k_2) \\
		&=\sum_{k_2=0}^{\infty}\frac{(-x)^{k_2}}{k_2!}\sum_{i_2 + j_2=k_2}\binom{k_2}{i_2}\zeta_{\scalebox{0.5}{\rm EMS}}(-i_2) \zeta_{\scalebox{0.5}{\rm EMS}}(-k_1-j_2) \nonumber\\
		&=\left\{\sum_{i_2=0}^{\infty}\frac{(-x)^{i_2}}{i_2!}\zeta_{\scalebox{0.5}{\rm EMS}}(-i_2)\right\} \left\{\sum_{j_2=0}^{\infty}\frac{(-x)^{j_2}}{j_2!}\zeta_{\scalebox{0.5}{\rm EMS}}(-k_1-j_2)\right\} \nonumber\\
		&=\frak{h}\left\{(-1)^{k_1}\partial_x^{k_1}\left(\frak{h}\right)\right\} \nonumber\\
		&=(-1)^{k_1}\left(\frak{h}\partial_x^{k_1}\right)\left(\frak{h}\right).\nonumber
	\end{align}
	
	Let $n=n_0\geq3$. We assume that (\ref{eqn:3.1.2}) and (\ref{eqn:3.1.3}) hold for $2\leq n\leq n_0-1$. Firstly, we prove the equation (\ref{eqn:3.1.2}). By Lemma \ref{lem:3.1.4} which will be proved later, the second term of the right hand side of the equation (\ref{eqn:3.1.1}) is calculated to be
	\begin{align*}
		&\sum_{p=2}^{n_0-1}\sum_{\substack{\{\circ_q,\ \diamond_q\}=\{+,\ \scalebox{2}{,}\ \}\\p\leq q\leq n_0-1}}\left\{\sum_{i_{n_0} + j_{n_0}=k_{n_0}}\binom{k_{n_0}}{i_{n_0}}\zeta_{\scalebox{0.5}{\rm EMS}}(-i_{n_0}) \zeta_{\scalebox{0.5}{\rm EMS}}(-k_1,\dots, -k_{n_0-1}-j_{n_0})\right\} \\
		&=\sum_{p=2}^{{n_0}-1}2^{{n_0}-p}\left\{\sum_{i_{n_0} + j_{n_0}=k_{n_0}}\binom{k_{n_0}}{i_{n_0}}\zeta_{\scalebox{0.5}{\rm EMS}}(-i_{n_0}) \zeta_{\scalebox{0.5}{\rm EMS}}(-k_1,\dots, -k_{{n_0}-1}-j_{n_0})\right\} \\
		&=(2^{{n_0}-1}-2)\sum_{i_{n_0} + j_{n_0}=k_{n_0}}\binom{k_{n_0}}{i_{n_0}}\zeta_{\scalebox{0.5}{\rm EMS}}(-i_{n_0}) \zeta_{\scalebox{0.5}{\rm EMS}}(-k_1,\dots, -k_{{n_0}-1}-j_{n_0}).
	\end{align*}
	Therefore, we have
	\begin{align*}
		&(\mbox{RHS of (\ref{eqn:3.1.1})}) \\
		=& \frac{1}{2^{{n_0}-1}-1}\left\{\sum_{i_{n_0} + j_{n_0}=k_{n_0}}\binom{k_{n_0}}{i_{n_0}}\zeta_{\scalebox{0.5}{\rm EMS}}(-i_{n_0}) \zeta_{\scalebox{0.5}{\rm EMS}}(-k_1,\dots, -k_{{n_0}-1}-j_{n_0})\right. \\
		&\left.+ (2^{{n_0}-1}-2)\sum_{i_{n_0} + j_{n_0}=k_{n_0}}\binom{k_{n_0}}{i_{n_0}}\zeta_{\scalebox{0.5}{\rm EMS}}(-i_{n_0}) \zeta_{\scalebox{0.5}{\rm EMS}}(-k_1,\dots, -k_{{n_0}-1}-j_{n_0})\right\} \\
		&=\sum_{i_{n_0} + j_{n_0}=k_{n_0}}\binom{k_{n_0}}{i_{n_0}}\zeta_{\scalebox{0.5}{\rm EMS}}(-i_{n_0}) \zeta_{\scalebox{0.5}{\rm EMS}}(-k_1,\dots, -k_{{n_0}-1}-j_{n_0}).
	\end{align*}
	So we get the equation (\ref{eqn:3.1.2}) for $n\geq3$.
	
	Secondly, we prove the equation (\ref{eqn:3.1.3}) for $n=n_0\geq3$. By using the equation (\ref{eqn:3.1.2}) for $n=n_0$ which we have proved just above, we have
	\begin{align*}
		\frak{h}_{k_1,\cdots,k_{{n_0}-1}}(x)&=(-1)^{k_{{n_0}-1}}\frak{h}(x)\partial_x^{k_{{n_0}-1}}\left(\frak{h}_{k_1,\cdots,k_{{n_0}-2}}(x)\right)
		\intertext{in the same way to case of $n=2$. By our induction hypotheses,}
		&=(-1)^{k_{{n_0}-1}}\frak{h}(x)\partial_x^{k_{{n_0}-1}}\Bigl((-1)^{k_1+\cdots+k_{{n_0}-2}}\left(\frak{h}(x)\partial_x^{k_{{n_0}-2}}\right)\cdots\left(\frak{h}(x)\partial_x^{k_1}\right)\left(\frak{h}(x)\right)\Bigr) \\
		&=(-1)^{k_1+\cdots+k_{{n_0}-1}}\left(\frak{h}(x)\partial_x^{k_{{n_0}-1}}\right)\cdots\left(\frak{h}(x)\partial_x^{k_1}\right)\left(\frak{h}(x)\right)
	\end{align*}
	So we get the equation (\ref{eqn:3.1.3}) for $n\geq3$.
\end{proof}

We prove the following lemma used in the above proof.
\begin{lem}\label{lem:3.1.4}
	Let $n_0\geq3$. We assume that {\rm (\ref{eqn:3.1.3})} holds for $n=l$ with $2\leq l\leq n_0-1$. Let $2\leq p \leq n_0-1$ and $\circ_i \in \{+,\scalebox{2}{,}\}$ for $p\leq i\leq n_0-1$. Then we have
	\begin{align}
		\label{eqn:3.1.4} \sum_{\substack{i_p + j_p=k_p\\ \scalebox{0.5}{\rotatebox{90}{$\cdots$}}\\i_{n_0} + j_{n_0}=k_{n_0}}}\prod_{a=p}^{n_0}&\binom{k_a}{i_a}\zeta_{\scalebox{0.5}{\rm EMS}}(-i_{p}\circ_{p}\cdots \circ_{{n_0}-1} -i_{n_0})\zeta_{\scalebox{0.5}{\rm EMS}}(-k_{1},\dots, -k_{p-1}-j_{p}\diamond_{p}  \cdots\diamond_{{n_0}-1} -j_{n_0}) \\
		&= \sum_{i_{n_0} + j_{n_0}=k_{n_0}}\binom{k_{n_0}}{i_{n_0}}\zeta_{\scalebox{0.5}{\rm EMS}}(-i_{n_0}) \zeta_{\scalebox{0.5}{\rm EMS}}(-k_1,\dots, -k_{{n_0}-1}-j_{n_0}). \nonumber
	\end{align}
	Here $\diamond_i$ is chosen to be with $\{\circ_i,\ \diamond_i\}=\{+,\scalebox{2}{,}\}$ for $p\leq i\leq {n_0}-1$.
\end{lem}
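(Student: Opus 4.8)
The plan is to prove the identity by translating it into an identity of generating functions in a fresh variable $t_{n_0}$, exactly as is done for the base case $n_0=3$ in the deleted draft, and then to run an induction on $n_0$. First I would multiply both sides of \eqref{eqn:3.1.4} by $(-t_{n_0})^{k_{n_0}}/k_{n_0}!$ and sum over $k_{n_0}\geq 0$. On the right-hand side the convolution over $i_{n_0}+j_{n_0}=k_{n_0}$ factors as a product of two power series, namely $\frak h(t_{n_0})$ times the series whose $j_{n_0}$-th coefficient is $\zeta_{\scalebox{0.5}{\rm EMS}}(-k_1,\dots,-k_{n_0-1}-j_{n_0})$; the latter is $(-1)^{k_{n_0-1}}\partial_{t_{n_0}}^{k_{n_0-1}}\frak h_{k_1,\dots,k_{n_0-2}}(t_{n_0})$, so by the induction hypothesis \eqref{eqn:3.1.3} for $n=n_0-1$ the whole right-hand side becomes $(-1)^{k_1+\cdots+k_{n_0-1}}\bigl(\frak h\partial_{t_{n_0}}^{k_{n_0-1}}\bigr)\cdots\bigl(\frak h\partial_{t_{n_0}}^{k_1}\bigr)(\frak h)$, evaluated at $t_{n_0}$.

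Next I would expand the left-hand side of \eqref{eqn:3.1.4} the same way. Summing the $k_{n_0}$-series splits off the outermost convolution $i_{n_0}+j_{n_0}=k_{n_0}$ into a product of two power series in $t_{n_0}$: one whose coefficients are $\zeta_{\scalebox{0.5}{\rm EMS}}(-i_p\circ_p\cdots\circ_{n_0-1}-i_{n_0})$, i.e. a series of the shape $\frak h_{i_p\circ_p\cdots\circ_{n_0-2}\,i_{n_0-1}}(t_{n_0})$ if $\circ_{n_0-1}={}$ comma, or a derivative of $\frak h$ if $\circ_{n_0-1}={}+$; and a dual series of the shape $\overline{\frak h}_{k_1,\dots,k_{p-1}-j_p\diamond_p\cdots\diamond_{n_0-2}\,j_{n_0-1}}(t_{n_0})$. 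Here I would first reinstate the two auxiliary computations from the suppressed draft — the analogue of \eqref{eqn:3.1.7}--\eqref{eqn:3.1.8}, expressing these ``mixed $\circ$'' generating functions as $\frak h$ times an explicit alternating product of operators $h_{i,l}=\frak h^{\delta_i}\partial_x^{l}$ and one trailing $\frak h$ — which follow from the induction hypothesis \eqref{eqn:3.1.3} by peeling off commas one at a time. Substituting these two shapes and collecting the sign, the left-hand side becomes $(-1)^{k_1+\cdots+k_{n_0-1}}\frak h(t_{n_0})$ times a sum over $i_p+j_p=k_p,\dots,i_{n_0-1}+j_{n_0-1}=k_{n_0-1}$ of a product of two operator-strings applied to $\frak h$; the binomial coefficients $\prod_{a=p}^{n_0-1}\binom{k_a}{i_a}$ together with these convolutions are precisely the Leibniz rule, applied $(n_0-1-p+1)$ times, which reassembles the two strings into the single telescoping product $\bigl(\frak h\partial_{t_{n_0}}^{k_{n_0-1}}\bigr)\cdots\bigl(\frak h\partial_{t_{n_0}}^{k_1}\bigr)(\frak h)$. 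Comparing with the right-hand side computed above finishes the induction step, the base case $n_0=3$ being the direct two-case check ($(\circ_2,\diamond_2)=(\text{comma},+)$ or $(+,\text{comma})$).

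The main obstacle is purely bookkeeping: one must handle uniformly the two possibilities for each $\circ_q$ (which determines whether the $q$-th factor contributes a comma, forcing an $\frak h$ into the operator string via $h_{q,\cdot}$, or a $+$, contributing only a bare $\partial^{\cdot}$), and one must pin down exactly which index $m=\max\{q\mid\circ_q=\text{comma}\}$ controls the sign exponent in the intermediate lemmas so that all the $(-1)$'s combine to the clean $(-1)^{k_1+\cdots+k_{n_0-1}}$. Once the shapes \eqref{eqn:3.1.7}--\eqref{eqn:3.1.8} are established, the reduction is a mechanical but careful application of the Leibniz rule; no genuinely new idea is needed beyond what already appears in the $n=3$ case. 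I would therefore structure the write-up as: (i) state and prove the auxiliary generating-function identities for ``mixed'' renormalized values (the former Lemma \ref{lem:3.1.3}); (ii) treat $n_0=3$ by the two-case computation; (iii) do the induction step $n_0\geq 4$ exactly as above, invoking (i) and the Leibniz rule.
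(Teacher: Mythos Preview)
Your approach is essentially the same as the paper's: both pass to generating functions in the last variable, use the induction hypothesis \eqref{eqn:3.1.3} to rewrite the two ``mixed $\circ$/$\diamond$'' factors as iterated operator products of the shape $(\frak h^{\delta}\partial^{\cdot})\cdots(\frak h)$, and then collapse the remaining convolutions $\sum_{i_a+j_a=k_a}\binom{k_a}{i_a}$ by repeated application of the Leibniz rule. The paper's final write-up treats all $n_0\geq 3$ in a single pass (no separate base case, the $\delta_i$-bookkeeping done inline rather than factored out as an auxiliary lemma), so the ``induction on $n_0$'' you propose is not actually needed --- the Leibniz step already works uniformly --- but this is purely organizational and your three-part structure is correct.
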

\noindent
\begin{proof}
	We get
	\begin{align}
		\sum_{k_{n_0}=0}^{\infty}\frac{(-x)^{k_{n_0}}}{k_{n_0}!}(\mbox{RHS of (\ref{eqn:3.1.4})})&=(-1)^{k_{{n_0}-1}}\frak{h}\partial_{x}^{k_{{n_0}-1}}\Bigl(\frak{h}_{k_1,\dots,k_{{n_0}-2}}(x)\Bigr) \nonumber
		\intertext{in the same way to the computations of $\frak{h}_{k_1}(x)$ in (\ref{eqn:3.1.6}). By our induction hypothesis on (\ref{eqn:3.1.3}), for $n_0$ we obtain}
		&=\label{eqn:3.1.5} (-1)^{k_1+\cdots+k_{{n_0}-1}}\left(\frak{h}\partial_x^{k_{{n_0}-1}}\right)\cdots\left(\frak{h}\partial_x^{k_1}\right)(\frak{h}).
	\end{align}
	On the other hand, we have
	\begin{align*}
		\sum_{k_{n_0}=0}^{\infty}&\frac{(-x)^{k_{n_0}}}{k_{n_0}!}(\mbox{LHS of (\ref{eqn:3.1.4})}) \\
		= &\sum_{\substack{i_p+ j_p=k_p\\ \scalebox{0.5}{\rotatebox{90}{$\cdots$}}\\i_{{n_0}-1} + j_{{n_0}-1}=k_{{n_0}-1}}}\prod_{a=p}^{{n_0}-1}\binom{k_a}{i_a}\left\{\sum_{i_{n_0}=0}^{\infty}\frac{(-x)^{i_{n_0}}}{i_{n_0}!}\zeta_{\scalebox{0.5}{\rm EMS}}(-i_{p}\circ_{p}\cdots \circ_{{n_0}-1} -i_{n_0})\right\} \\
		&\times \left\{\sum_{j_{n_0}=0}^{\infty}\frac{(-x)^{j_{n_0}}}{j_{n_0}!}\zeta_{\scalebox{0.5}{\rm EMS}}(-k_{1},\dots, -k_{p-1}-j_{p}\diamond_{p}  \cdots\diamond_{{n_0}-1} -j_{n_0})\right\}.
	\end{align*}
	We also consider the following two cases:
	\begin{enumerate}
	\item[{\it Case i)}] : When $(\circ_{{n_0}-1},\diamond_{{n_0}-1}) = (\scalebox{2}{,},+)$, we compute
	\begin{align*}
		&\sum_{k_{n_0}=0}^{\infty}\frac{(-x)^{k_{n_0}}}{k_{n_0}!}(\mbox{LHS of (\ref{eqn:3.1.4})}) \\
		= &\sum_{\substack{i_p+ j_p=k_p\\ \scalebox{0.5}{\rotatebox{90}{$\cdots$}}\\i_{{n_0}-1} + j_{{n_0}-1}=k_{{n_0}-1}}}\prod_{a=p}^{{n_0}-1}\binom{k_a}{i_a}\left\{\sum_{i_{n_0}=0}^{\infty}\frac{(-x)^{i_{n_0}}}{i_{n_0}!}\zeta_{\scalebox{0.5}{\rm EMS}}(-i_{p}\circ_{p}\cdots \circ_{{n_0}-2}-i_{{n_0}-1}, -i_{n_0})\right\} \\
		&\times \left\{\sum_{j_{n_0}=0}^{\infty}\frac{(-x)^{j_{n_0}}}{j_{n_0}!}\zeta_{\scalebox{0.5}{\rm EMS}}(-k_{1},\dots, -k_{p-1}-j_{p}\diamond_{p}  \cdots\diamond_{{n_0}-2}-j_{{n_0}-1} -j_{n_0})\right\}.
		\intertext{Put $m := \left\{\begin{array}{ll}
		p-1 & {\rm when}\ \mbox{{\rm $\diamond_i$ is + for all $i$}}, \\
		\max{\{l\ |\ p \leq l \leq {n_0}-2,\ \diamond_l=\scalebox{2}{,}\}} & \mbox{{\rm otherwise}}.
	\end{array}\right.$}
		\intertext{Then we have}
		= &\sum_{\substack{i_p+ j_p=k_p\\ \scalebox{0.5}{\rotatebox{90}{$\cdots$}}\\i_{{n_0}-1} + j_{{n_0}-1}=k_{{n_0}-1}}}\prod_{a=p}^{{n_0}-1}\binom{k_a}{i_a}\left\{\sum_{i_{n_0}=0}^{\infty}\frac{(-x)^{i_{n_0}}}{i_{n_0}!}\zeta_{\scalebox{0.5}{\rm EMS}}(-i_{p}\circ_{p}\cdots \circ_{{n_0}-2}-i_{{n_0}-1}, -i_{n_0})\right\} \\
		&\times (-1)^{S}\partial_x^{S}\left\{\sum_{j_{n_0}=0}^{\infty}\frac{(-x)^{j_{n_0}}}{j_{n_0}!}\zeta_{\scalebox{0.5}{\rm EMS}}(-k_{1},\dots, -k_{p-1}-j_{p}\diamond_{p}  \cdots\diamond_{m-1}-j_m)\right\}.
		\intertext{Here $S:= \left\{\begin{array}{ll}
		k_{p-1}+j_p+\cdots+j_{{n_0}-1} & {\rm when}\ \mbox{{\rm $\diamond_i$ is + for all $i$}}, \\
		j_{m+1}+\cdots+j_{{n_0}-1} & \mbox{{\rm otherwise}}.\end{array}\right.$}
		=& \sum_{\substack{i_p+ j_p=k_p\\ \scalebox{0.5}{\rotatebox{90}{$\cdots$}}\\i_{{n_0}-1} + j_{{n_0}-1}=k_{{n_0}-1}}}\hspace{-2.em}(-1)^{S}\prod_{a=p}^{{n_0}-1}\binom{k_a}{i_a} \frak{h}_{i_p\circ_p\cdots \circ_{{n_0}-2} \ i_{{n_0}-1}}(x) \cdot\overline{\frak{h}}_{k_{1},\dots, k_{p-1}+j_{p}\diamond_{p}  \cdots\diamond_{{n_0}-2}\ j_{{n_0}-1}}(x).
		\intertext{Here we use the definitions of $\frak{h}_{k_1,\dots,k_{n-1}}(x)$ and $\overline{\frak{h}}_{k_1,\dots,k_{n_0}}(x)$. And by using our induction hypothesis on (\ref{eqn:3.1.3}), we have}
		=& \sum_{\substack{i_p+ j_p=k_p\\ \scalebox{0.5}{\rotatebox{90}{$\cdots$}}\\i_{{n_0}-1} + j_{{n_0}-1}=k_{{n_0}-1}}}\hspace{-1.em}\prod_{a=p}^{{n_0}-1}\binom{k_a}{i_a} \left\{(-1)^{\scalebox{0.65}{$\displaystyle\sum_{q=p}^{{n_0}-1}i_q$}}\left(\frak{h}\partial_x^{i_{{n_0}-1}}\right)\left(\frak{h}^{\delta_{{n_0}-2}}\partial_x^{i_{{n_0}-2}}\right)\cdots\left(\frak{h}^{\delta_p}\partial_x^{i_p}\right)(\frak{h})\right\} \\
		&\times \left\{(-1)^{\scalebox{0.65}{$\displaystyle\sum_{q=1}^{p-1}$}k_q + \scalebox{0.65}{$\displaystyle\sum_{q=p}^{{n_0}-1}j_q$}}\partial_{x}^{j_{{n_0}-1}}\left(\frak{h}^{1-\delta_{{n_0}-2}}\partial_x^{j_{{n_0}-2}}\right)\cdots\left(\frak{h}^{1-\delta_p}\partial_x^{j_p}\right)\left(\frak{h}\partial_x^{k_{p-1}}\right)\cdots\left(\frak{h}\partial_x^{k_1}\right)(\frak{h})\right\}.
		\intertext{Here we put $\delta_{i} := \left\{\begin{array}{cl}0&{\rm if}\ \circ_i = +, \\ 1&{\rm if}\ \circ_i = \scalebox{2}{,},\end{array}\right.$ for $p\leq i \leq {n_0}-2$.}
		=& (-1)^{\scalebox{0.65}{$\displaystyle\sum_{q=1}^{{n_0}-1}k_q$}}\frak{h}\sum_{\substack{i_p+ j_p=k_p\\ \scalebox{0.5}{\rotatebox{90}{$\cdots$}}\\i_{{n_0}-1} + j_{{n_0}-1}=k_{{n_0}-1}}}\prod_{a=p}^{{n_0}-1}\binom{k_a}{i_a} \left\{\partial_x^{i_{{n_0}-1}}\left(\frak{h}^{\delta_{{n_0}-2}}\partial_x^{i_{{n_0}-2}}\right)\cdots\left(\frak{h}^{\delta_p}\partial_x^{i_p}\right)(\frak{h})\right\} \\
		&\times \left\{\partial_{x}^{j_{{n_0}-1}}\left(\frak{h}^{1-\delta_{{n_0}-2}}\partial_x^{j_{{n_0}-2}}\right)\cdots\left(\frak{h}^{1-\delta_p}\partial_x^{j_p}\right)\left(\frak{h}\partial_x^{k_{p-1}}\right)\cdots\left(\frak{h}\partial_x^{k_1}\right)(\frak{h})\right\} \\
		=& (-1)^{\scalebox{0.65}{$\displaystyle\sum_{q=1}^{{n_0}-1}k_q$}}\frak{h}\partial_{x}^{k_{{n_0}-1}}\left(\frak{h}\hspace{-0.5em}\sum_{\substack{i_p+ j_p=k_p\\ \scalebox{0.5}{\rotatebox{90}{$\cdots$}}\\i_{{n_0}-2} + j_{{n_0}-2}=k_{{n_0}-2}}}\hspace{-1em}\prod_{a=p}^{{n_0}-2}\binom{k_a}{i_a} \left\{\partial_x^{i_{{n_0}-2}}\left(\frak{h}^{\delta_{{n_0}-3}}\partial_x^{i_{{n_0}-3}}\right)\cdots\left(\frak{h}^{\delta_p}\partial_x^{i_p}\right)(\frak{h})\right\}\right. \\
		&\times \left.\left\{\partial_{x}^{j_{{n_0}-2}}\left(\frak{h}^{1-\delta_{{n_0}-3}}\partial_x^{j_{{n_0}-3}}\right)\cdots\left(\frak{h}^{1-\delta_p}\partial_x^{j_p}\right)\left(\frak{h}\partial_x^{k_{p-1}}\right)\cdots\left(\frak{h}\partial_x^{k_1}\right)(\frak{h})\right\}\vphantom{\sum_{\substack{i_p+ j_p=k_p\\ \scalebox{0.5}{\rotatebox{90}{$\cdots$}}\\i_{n-2} + j_{n-2}=k_{n-2}}}}\right).
		\intertext{We use Leibniz rule in last equality. By using this rule repeatedly, we get}
		=& (-1)^{\scalebox{0.65}{$\displaystyle\sum_{q=1}^{{n_0}-1}k_q$}}\left(\frak{h}\partial_x^{k_{{n_0}-1}}\right)\cdots\left(\frak{h}\partial_x^{k_1}\right)(\frak{h}).
	\end{align*}
This is equal to (\ref{eqn:3.1.5}). 
	\end{enumerate}
	\begin{enumerate}
	\item[{\it Case ii)}] : When $(\circ_{{n_0}-1},\diamond_{{n_0}-1}) = (+,\scalebox{2}{,})$, it can be proved in the same way to {\it Case i)}.
	\end{enumerate}
\end{proof}
\subsection{An equivalence between desingularized values and renormalized ones}
We reveal a close relationship among desingularized values and renormalized ones in Theorem \ref{thm:3.2.1}. As a consequence, we get an explicit formula of renormalized values in terms of Bernoulli numbers in Corollary \ref{cor:3.2.1}.

Our main theorem of this paper is the following explicit relationship between the generating function $Z_{\scalebox{0.5}{\rm FKMT}}(t_1,\dots,t_n)$ of the desingularized values $\zeta_{\scalebox{0.5}{\rm FKMT}}(-k_1,\dots,-k_n)$ in (\ref{eqn:0.4}) and the generating function $Z_{\scalebox{0.5}{\rm EMS}}(t_1,\dots,t_n)$ of the renormalized values $\zeta_{\scalebox{0.5}{\rm EMS}}(-k_1,\dots,-k_n)$ in (\ref{eqn:0.5}).
\begin{thm}\label{thm:3.2.1}
	For $n \in \mathbb{N}$, we have
	\begin{equation}\label{eqn:3.2.3}
		Z_{\scalebox{0.5}{\rm EMS}}(t_1,\dots,t_n) = \prod_{i=1}^{n}\frac{1-e^{-t_i-\cdots-t_n}}{t_i+\cdots+t_n}\cdot Z_{\scalebox{0.5}{\rm FKMT}}(-t_1,\dots,-t_n).
	\end{equation}
\end{thm}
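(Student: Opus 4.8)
The plan is to reduce the $n$-variable identity (\ref{eqn:3.2.3}) to the case $n=1$ together with an auxiliary product formula for $Z_{\scalebox{0.5}{\rm EMS}}$ extracted from Proposition \ref{thm:3.1.1}, and then to assemble everything using the recurrence of Corollary \ref{cor:1.1.1}.

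\emph{Step 1 (the case $n=1$).} Here $Z_{\scalebox{0.5}{\rm EMS}}(t_1)=\frak{h}(t_1)$. Using $\zeta_{\scalebox{0.5}{\rm EMS}}(-k)=\zeta(-k)=(-1)^{k}B_{k+1}/(k+1)$ (the coincidence with meromorphic values quoted above, together with Euler's classical formula) and the Bernoulli series (\ref{eqn:1.1.4}), one gets $\frak{h}(x)=\sum_{k\geq0}\frac{B_{k+1}}{(k+1)!}x^{k}=\frac{1}{x}\Bigl(\frac{x}{e^{x}-1}-1\Bigr)=\frac{1}{e^{x}-1}-\frac{1}{x}$ in $\mathbb{C}[[x]]$. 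Since Proposition \ref{prop:1.1.1} with $n=1$ reads $Z_{\scalebox{0.5}{\rm FKMT}}(t_1)=\frac{(1-t_1)e^{t_1}-1}{(e^{t_1}-1)^{2}}$, a short rational manipulation gives $\frac{1-e^{-t_1}}{t_1}Z_{\scalebox{0.5}{\rm FKMT}}(-t_1)=\frac{1+t_1-e^{t_1}}{t_1(e^{t_1}-1)}=\frak{h}(t_1)$. Thus (\ref{eqn:3.2.3}) holds for $n=1$; equivalently, for any formal sum $s$ one has the one-variable identity $\frak{h}(s)=\frac{1-e^{-s}}{s}Z_{\scalebox{0.5}{\rm FKMT}}(-s)$.

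\emph{Step 2 (a product formula).} I would establish that $Z_{\scalebox{0.5}{\rm EMS}}(t_1,\dots,t_n)=\prod_{i=1}^{n}\frak{h}(t_i+\cdots+t_n)$ in $\mathbb{C}[[t_1,\dots,t_n]]$. Grouping the defining series (\ref{eqn:0.5}) by the summation over $k_n$ yields $Z_{\scalebox{0.5}{\rm EMS}}(t_1,\dots,t_n)=\sum_{k_1,\dots,k_{n-1}\geq0}\prod_{i=1}^{n-1}\frac{(-t_i)^{k_i}}{k_i!}\,\frak{h}_{k_1,\dots,k_{n-1}}(t_n)$. From (\ref{eqn:3.1.3}) one extracts the reduced relation $\frak{h}_{k_1,\dots,k_{n-1}}(x)=(-1)^{k_{n-1}}\frak{h}(x)\,\partial_x^{k_{n-1}}\frak{h}_{k_1,\dots,k_{n-2}}(x)$; substituting it, the signs absorb the factors $(-t_i)^{k_i}$, and the sum over $k_{n-1}$ collapses because $\sum_{k\geq0}\frac{s^{k}}{k!}\partial_x^{k}$ is the formal shift $x\mapsto x+s$. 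Proceeding by induction on $n$ — peeling off one variable (one shift) at a time — gives the asserted product, and in particular the recurrence $Z_{\scalebox{0.5}{\rm EMS}}(t_1,\dots,t_n)=\frak{h}(t_1+\cdots+t_n)\,Z_{\scalebox{0.5}{\rm EMS}}(t_2,\dots,t_n)$, the analogue for $Z_{\scalebox{0.5}{\rm EMS}}$ of (\ref{eqn:1.2.1}).

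\emph{Step 3 (assembling).} Iterating Corollary \ref{cor:1.1.1} gives $Z_{\scalebox{0.5}{\rm FKMT}}(-t_1,\dots,-t_n)=\prod_{i=1}^{n}Z_{\scalebox{0.5}{\rm FKMT}}\bigl(-(t_i+\cdots+t_n)\bigr)$. Substituting the one-variable identity of Step 1 into each factor of the product formula of Step 2,
$$Z_{\scalebox{0.5}{\rm EMS}}(t_1,\dots,t_n)=\prod_{i=1}^{n}\frak{h}(t_i+\cdots+t_n)=\prod_{i=1}^{n}\frac{1-e^{-t_i-\cdots-t_n}}{t_i+\cdots+t_n}\cdot\prod_{i=1}^{n}Z_{\scalebox{0.5}{\rm FKMT}}\bigl(-(t_i+\cdots+t_n)\bigr)=\prod_{i=1}^{n}\frac{1-e^{-t_i-\cdots-t_n}}{t_i+\cdots+t_n}\cdot Z_{\scalebox{0.5}{\rm FKMT}}(-t_1,\dots,-t_n),$$
which is (\ref{eqn:3.2.3}). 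I expect the main obstacle to be Step 2: one must correctly collapse the iterated, non-commuting ``multiply by $\frak{h}$, then differentiate'' expression coming from (\ref{eqn:3.1.3}) into an honest product of shifted copies of $\frak{h}$, keeping precise track of which variables accumulate into which argument; once this is in place, Steps 1 and 3 are routine formal manipulations.
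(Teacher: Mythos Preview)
Your proof is correct and follows essentially the same strategy as the paper: establish the one-variable identity $Z_{\scalebox{0.5}{\rm EMS}}(x)=\frac{1-e^{-x}}{x}Z_{\scalebox{0.5}{\rm FKMT}}(-x)$, prove the multiplicative recurrence $Z_{\scalebox{0.5}{\rm EMS}}(t_1,\dots,t_n)=Z_{\scalebox{0.5}{\rm EMS}}(t_1+\cdots+t_n)\,Z_{\scalebox{0.5}{\rm EMS}}(t_2,\dots,t_n)$ (equivalently the full product formula), and combine with the iterated form of Corollary~\ref{cor:1.1.1}.

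The one substantive difference is in how the recurrence for $Z_{\scalebox{0.5}{\rm EMS}}$ is obtained. The paper first derives the coefficient identity
\[
\zeta_{\scalebox{0.5}{\rm EMS}}(-k_1,\dots,-k_n)=\sum_{\substack{i_a+j_a=k_a\\2\le a\le n}}\prod_{a=2}^n\binom{k_a}{i_a}\zeta_{\scalebox{0.5}{\rm EMS}}(-i_2,\dots,-i_n)\,\zeta_{\scalebox{0.5}{\rm EMS}}(-k_1-j_2-\cdots-j_n)
\]
by invoking Lemma~\ref{lem:3.1.4} a second time (with $p=2$ and all $\circ_q=\scalebox{2}{,}$), and then reads off the recurrence at the level of generating series. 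You instead work directly with (\ref{eqn:3.1.3}), peeling off the outermost factor $\frak{h}\,\partial_x^{k_{n-1}}$ and using the formal Taylor shift $\sum_{k\ge 0}\frac{s^k}{k!}\partial_x^k=\mathrm{ev}_{x\mapsto x+s}$ to collapse the sum over $k_{n-1}$; iterating yields $Z_{\scalebox{0.5}{\rm EMS}}(t_1,\dots,t_n)=\prod_{i=1}^n\frak{h}(t_i+\cdots+t_n)$ without passing through the coefficient identity. This is a mild streamlining: it avoids a second appeal to Lemma~\ref{lem:3.1.4}, at the cost of not displaying the parallel with (\ref{eqn:1.2.2}) that the paper emphasizes. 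Either way, the real work is contained in Proposition~\ref{thm:3.1.1}.
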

\begin{proof}
	By Proposition \ref{thm:3.1.1} and Lemma \ref{lem:3.1.4} we get
	\begin{equation}\label{eqn:3.2.1}
		\zeta_{\scalebox{0.5}{\rm EMS}}(-k_1,\dots,-k_n) = \sum_{\substack{i_2 + j_2=k_2\\ \scalebox{0.5}{\rotatebox{90}{$\cdots$}}\\i_n + j_n=k_n}}\prod_{a=2}^n\binom{k_a}{i_a}\zeta_{\scalebox{0.5}{\rm EMS}}(-i_{2},\dots, -i_n)\zeta_{\scalebox{0.5}{\rm EMS}}(-k_{1}-j_{2}- \cdots-j_n).
	\end{equation}
	Here, we use Lemma \ref{lem:3.1.4} for $p=2$ and for all $\circ_q=\scalebox{2}{,}\ (2 \leq q \leq n)$. It is remarkable that the same recurrence formula holds for $\zeta_{\scalebox{0.5}{\rm FKMT}}(-k_1,\dots,-k_n)$ of (\ref{eqn:1.2.2}). Thus, we get
	\begin{equation}\label{eqn:3.2.4}
		Z_{\scalebox{0.5}{\rm EMS}}(t_1,\dots,t_n) = Z_{\scalebox{0.5}{\rm EMS}}(t_2,\dots,t_n)\cdot Z_{\scalebox{0.5}{\rm EMS}}(t_1+\cdots+t_n) \quad(n \in \mathbb{N}).
	\end{equation}
	Now from \cite{EMS1} Theorem 4.3, $\zeta_{\scalebox{0.5}{\rm EMS}}(-k_1)=\zeta(-k_1)$ at $k_1 \in \mathbb{N}_0$, so we can write $Z_{\scalebox{0.5}{\rm EMS}}(x)$ by
	\begin{equation*}
		Z_{\scalebox{0.5}{\rm EMS}}(x) = \frac{1+x-e^{x}}{x(e^{x}-1)}.
	\end{equation*}
	We get the following equation by $Z_{\scalebox{0.5}{\rm EMS}}(x)$ and $Z_{\scalebox{0.5}{\rm FKMT}}(x)$:
	\begin{equation}\label{eqn:3.2.5}
		Z_{\scalebox{0.5}{\rm EMS}}(x) = \frac{1-e^{-x}}{x} Z_{\scalebox{0.5}{\rm FKMT}}(-x).
	\end{equation}
	By using (\ref{eqn:1.2.1}), (\ref{eqn:3.2.4}) and (\ref{eqn:3.2.5}), we get (\ref{eqn:3.2.3}).
\end{proof}
By Theorem \ref{thm:3.2.1}, we find that desingularized values and renormalized ones are equivalent. Namely, the renormalized values can be given as linear combinations of the desingularized ones.
\begin{example}
	The desingularized values and the renormalized values are equal at the origin:
	$$\zeta_{\scalebox{0.5}{\rm FKMT}}(\underbrace{0,\dots,0}_{n})=\zeta_{\scalebox{0.5}{\rm EMS}}(\underbrace{0,\dots,0}_{n})=B_1^n=\left(-\frac{1}{2}\right)^n$$
\end{example}

\begin{example}\label{ex:3.1}
	For $k_1,k_2,k_3 \in \mathbb{N}_0$, we have
	\begin{align*}
		&\zeta_{\scalebox{0.5}{\rm EMS}}(-k_1) = \displaystyle\sum_{\nu_{01}+\nu_{11}=k_1}\binom{k_1}{\nu_{01}}\frac{(-1)^{\nu_{11}}}{\nu_{01}+1}\zeta_{\scalebox{0.5}{\rm FKMT}}(-\nu_{11}),\\
		&\zeta_{\scalebox{0.5}{\rm EMS}}(-k_1,-k_2) = \displaystyle\sum_{\substack{\nu_{01}+\nu_{11}=k_1 \\\nu_{02}+\nu_{12}+\nu_{22}=k_2}}\binom{k_1}{\nu_{01}}\binom{k_2}{\nu_{02}\ \nu_{12}}\frac{1}{\nu_{02}+1}\frac{(-1)^{\nu_{11}+\nu_{22}}}{\nu_{01}+\nu_{12}+1}\zeta_{\scalebox{0.5}{\rm FKMT}}(-\nu_{11},-\nu_{22}), \\
		&\zeta_{\scalebox{0.5}{\rm EMS}}(-k_1,-k_2,-k_3) = \displaystyle\sum_{\substack{\nu_{01}+\nu_{11}=k_1 \\\nu_{02}+\nu_{12}+\nu_{22}=k_2\\\nu_{03}+\nu_{13}+\nu_{23}+\nu_{33}=k_3}}\binom{k_1}{\nu_{01}}\binom{k_2}{\nu_{02}\ \nu_{12}}\binom{k_3}{\nu_{03}\ \nu_{13}\ \nu_{23}}\\
		&\hspace{3cm}\times \frac{1}{\nu_{03}+1}\frac{1}{\nu_{02}+\nu_{13}+1}\frac{(-1)^{\nu_{01}+\nu_{12}+\nu_{23}}}{\nu_{01}+\nu_{12}+\nu_{23}+1}\zeta_{\scalebox{0.5}{\rm FKMT}}(-\nu_{11},-\nu_{22},-\nu_{33}).\\
		\end{align*}
		Here $\binom{k_2}{\nu_{02}\ \nu_{12}}:=\frac{k_2!}{\nu_{02}!\nu_{12}!(k_2-\nu_{02}-\nu_{12})!}$ and $\binom{k_3}{\nu_{03}\ \nu_{13}\ \nu_{23}}:=\frac{k_3!}{\nu_{03}!\nu_{13}!\nu_{23}!(k_3-\nu_{03}-\nu_{13}-\nu_{23})!}$.
\end{example}
On the other hand, desingularized values can be also given as linear combinations of product of  renormalized ones and Bernoulli numbers $B_n$:
\begin{example}\label{ex:3.2}
	For $k_1,k_2,k_3 \in \mathbb{N}_0$, we have
	\begin{align*}
		&\zeta_{\scalebox{0.5}{\rm FKMT}}(-k_1) = (-1)^{k_1}\displaystyle\sum_{\nu_{01}+\nu_{11}=k_1}\binom{k_1}{\nu_{01}}B_{\nu_{01}}\zeta_{\scalebox{0.5}{\rm EMS}}(-\nu_{11}), \\
		&\zeta_{\scalebox{0.5}{\rm FKMT}}(-k_1,-k_2) = (-1)^{k_1+k_2}\displaystyle\sum_{\substack{\nu_{01}+\nu_{11}=k_1 \\\nu_{02}+\nu_{12}+\nu_{22}=k_2}}\binom{k_1}{\nu_{01}}\binom{k_2}{\nu_{02}\ \nu_{12}}B_{\nu_{02}}B_{\nu_{01}+\nu_{12}}\zeta_{\scalebox{0.5}{\rm EMS}}(-\nu_{11},-\nu_{22}),\\
		&\zeta_{\scalebox{0.5}{\rm FKMT}}(-k_1,-k_2,-k_3) = (-1)^{k_1+k_2+k_3}\displaystyle\sum_{\substack{\nu_{01}+\nu_{11}=k_1 \\\nu_{02}+\nu_{12}+\nu_{22}=k_2\\\nu_{03}+\nu_{13}+\nu_{23}+\nu_{33}=k_3}}\binom{k_1}{\nu_{01}}\binom{k_2}{\nu_{02}\ \nu_{12}}\binom{k_3}{\nu_{03}\ \nu_{13}\ \nu_{23}}\\
		&\hspace{5cm}\times B_{\nu_{03}}B_{\nu_{02}+\nu_{13}}B_{\nu_{01}+\nu_{12}+\nu_{23}}\zeta_{\scalebox{0.5}{\rm EMS}}(-\nu_{11},-\nu_{22},-\nu_{33}). 
	\end{align*}
	\end{example}
By combining Proposition \ref{prop:1.1.1} and Theorem \ref{thm:3.2.1}, we obtain the following corollary.
\begin{cor}\label{cor:3.2.1}
	For $n\in\mathbb{N}$, we have
	\begin{equation*}
		Z_{\scalebox{0.5}{\rm EMS}}(t_1,\dots,t_n) = \prod_{i=1}^n\frac{(t_i+\cdots+t_n)-(e^{t_i+\cdots+t_n}-1)}{(t_i+\cdots+t_n)(e^{t_i+\cdots+t_n}-1)}.
	\end{equation*}
\end{cor}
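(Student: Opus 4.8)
The plan is to obtain the closed form for $Z_{\scalebox{0.5}{\rm EMS}}$ by feeding the explicit formula for $Z_{\scalebox{0.5}{\rm FKMT}}$ from Proposition \ref{prop:1.1.1} into the equivalence of Theorem \ref{thm:3.2.1}; since both factors appearing there already factorize over the partial sums $t_i+\cdots+t_n$, the whole identity collapses to a one-line check on each index $i$ separately. Throughout I write $s_i := t_i+\cdots+t_n$ to lighten the notation.

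First I would substitute $t_i \mapsto -t_i$ (so that $s_i \mapsto -s_i$) in the formula of Proposition \ref{prop:1.1.1} to record
\[
	Z_{\scalebox{0.5}{\rm FKMT}}(-t_1,\dots,-t_n) = \prod_{i=1}^n \frac{(1+s_i)e^{-s_i}-1}{(e^{-s_i}-1)^2},
\]
and then plug this into the right-hand side of Theorem \ref{thm:3.2.1}. Using $(e^{-s_i}-1)^2 = (1-e^{-s_i})^2$, one copy of $1-e^{-s_i}$ from the prefactor $\prod_i \frac{1-e^{-s_i}}{s_i}$ cancels against a factor in the denominator, so the $i$-th factor of $Z_{\scalebox{0.5}{\rm EMS}}$ reduces to $\dfrac{(1+s_i)e^{-s_i}-1}{s_i(1-e^{-s_i})}$.

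Second, I would multiply the numerator and denominator of this factor by $e^{s_i}$, which rewrites it as
\[
	\frac{(1+s_i)-e^{s_i}}{s_i(e^{s_i}-1)} = \frac{s_i - (e^{s_i}-1)}{s_i(e^{s_i}-1)},
\]
using the trivial identity $(1+s_i)-e^{s_i} = s_i-(e^{s_i}-1)$. Taking the product over $i=1,\dots,n$ and unfolding $s_i = t_i+\cdots+t_n$ then gives precisely the asserted formula.

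The argument is purely computational: no analytic input beyond Proposition \ref{prop:1.1.1} and Theorem \ref{thm:3.2.1} is required, and there is no genuine obstacle. The only points demanding a little care are the bookkeeping of signs and of the partial sums under the substitution $t_i\mapsto -t_i$, and the observation that the squared denominator $(e^{-s_i}-1)^2$ in $Z_{\scalebox{0.5}{\rm FKMT}}$ exactly absorbs one copy of $1-e^{-s_i}$ coming from the Theorem \ref{thm:3.2.1} prefactor.
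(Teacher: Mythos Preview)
Your proof is correct and follows exactly the approach the paper indicates: the paper simply states that the corollary follows ``by combining Proposition~\ref{prop:1.1.1} and Theorem~\ref{thm:3.2.1}'' without writing out the algebra, and you have filled in precisely those details. The sign and partial-sum bookkeeping, the cancellation of one factor of $1-e^{-s_i}$ against $(e^{-s_i}-1)^2$, and the final rewriting via multiplication by $e^{s_i}$ are all carried out correctly.
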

The above equation is equivalent to the equation {\rm (\ref{eqn:0.3})}. 
Therefore the renormalized values are described explicitly in terms of  Bernoulli numbers:
\begin{example}
	For $k_1,k_2,k_3 \in \mathbb{N}_0$, we have
	\begin{align*}
	&\zeta_{\scalebox{0.5}{\rm EMS}}(-k_1) = \displaystyle\frac{(-1)^{k_1}}{k_1+1}B_{k_1+1},\\
	&\zeta_{\scalebox{0.5}{\rm EMS}}(-k_1,-k_2) = (-1)^{k_1+k_2}\displaystyle\sum_{\nu_{12}+\nu_{22}=k_2}\binom{k_2}{\nu_{12}}\frac{B_{\nu_{22}+1}}{\nu_{22}+1}\frac{B_{k_1+\nu_{12}+1}}{k_1+\nu_{12}+1},\\
	&\zeta_{\scalebox{0.5}{\rm EMS}}(-k_1,-k_2,-k_3) = (-1)^{k_1+k_2+k_3}\displaystyle\sum_{\substack{\nu_{12}+\nu_{22}=k_2\\ \nu_{13}+\nu_{23}+\nu_{33}=k_3}}\binom{k_2}{\nu_{12}}\binom{k_3}{\nu_{13}\ \nu_{23}} \\
	&\hspace{6cm}\times \frac{B_{\nu_{33}+1}}{\nu_{33}+1}\frac{B_{\nu_{22}+\nu_{23}+1}}{\nu_{22}+\nu_{23}+1}\frac{B_{k_1+\nu_{12}+\nu_{13}+1}}{k_1+\nu_{12}+\nu_{13}+1}.
	\end{align*}
\end{example}

As is explained in our introduction, other types of renormalized values were investigated in several places in the literature (\cite{EMS2}, \cite{GZ}, \cite{MP} etc). However, their explicit relationships with the desingularized values $\zeta_{\scalebox{0.5}{\rm FKMT}}(-k_1,\cdots,-k_n)$ do not seem to be shown so far, actually which was posed as a question in \cite{FKMT} Question 4.8. It would be great if our equivalence (Theorem \ref{thm:3.2.1}) could also lead a direction to settle their question.


\bigskip
\thanks{ {\it Acknowledgements}. The author is cordially grateful to Professor H. Furusho for guiding him towards this topic and for giving  useful suggestions to him. I greatly appreciate the referee's numerous and helpful comments.}

\end{document}